\title[Free-fermions and skew stable Grothendieck polynomials]{Free-fermions and skew stable Grothendieck polynomials}
\author{Shinsuke Iwao}
\address{Department of Mathematics, Tokai University, 4-1-1, Kitakaname, Hiratsuka, Kanagawa 259-1292, Japan.}
\email{iwao@tokai.ac.jp}
\date{\today}
\newtheorem{thm}{Theorem}[section]
\newtheorem{prop}[thm]{Proposition}
\newtheorem{lemma}[thm]{Lemma}
\newtheorem{defi}[thm]{Definition}
\newtheorem{example}[thm]{Example}
\newtheorem{rem}{Remark}[section]
\newtheorem{cor}[thm]{Corollary}
\def\CC{\mathord{\mathbb{C}}}
\def\ZZ{\mathord{\mathbb{Z}}}
\def\QQ{\mathord{\mathbb{Q}}}
\newcommand{\bm}[1]{\mbox{\boldmath{$#1$}}}
\def\slsl{/\!\!/} 
\def\ket#1{\vert #1 \rangle}
\def\bra#1{\langle #1 \vert}
\def\zet#1{\left\vert{#1}\right\vert}
\def\ee{\mathbf{e}}
\def\Ygtab#1{{\ygtab{#1}}}
\begin{document}

\begin{abstract}

Skew stable Grothendieck polynomials are $K$-theoretic analogues of skew Schur polynomials.
We give a free-fermionic presentation of skew stable Grothendieck polynomials and their dual symmetric functions. 
By using our presentation, we derive a family of determinantal formulas, which are $K$-analogues of the Jacobi-Trudi formula for skew Schur functions.
We also introduce a combinatorial method to calculate certain expansions of skew (dual) stable Grothendieck polynomials by using the non-commutative supersymmetric Schur functions.

\smallskip
\noindent \textbf{Keywords.} Skew stable Grothendieck polynomial, determinantal formula, boson-fermion correspondence, non-commutative supersymmetric Schur function.

\smallskip
\noindent \textbf{MSC Classes.} 05E05, 05A10
\end{abstract}
\maketitle

\section{Introduction}\label{sec:intro}

Symmetric Grothendieck polynomials are $K$-theoretic analogues of Schur symmetric polynomials, which represent a Schubert class in the $K$-theory of the Grassmann variety~\cite{lascoux1982structure,lascoux1983symmetry}.
Although the Grothendieck polynomials are not stable when the number of independent variables tends to infinity, it is possible to define a certain ``symmetric function" by taking an appropriate limit, a \textit{stable limit}~\cite{fomin1994grothendieck}.
The symmetric function which is obtained is called the \textit{stable Grothendieck polynomial}. 

For a positive integer $n$, and a partition $\lambda$ of length $\ell(\lambda)\leq n$, the ($\beta$-)\textit{Grothendieck polynomial} $G_\lambda(x_1,\dots,x_n)$ corresponding to $\lambda$ is a symmetric polynomial that is expressed as follows~\cite{IKEDA201322,kirillov2016some,yeliussizov2017duality}: 
\begin{equation}\label{eq:ratio}
G_\lambda(x_1,\dots,x_n)=
\frac{\det\left(
x_i^{\lambda_j+n-j}(1+\beta x_i)^{j-1}
\right)_{1\leq i,j\leq n}
}{
\prod_{1\leq i<j\leq n}(x_i-x_j)
}.
\end{equation}
For example, $G_{(1)}(x_1,\dots,x_n)=\sum_{i=1}^{n} \beta^{i-1} e_i(x_1,\dots,x_n)$, where $e_i(x_1,\dots,x_n)$ is the $i$-th elementary symmetric polynomial in $n$ variables.
Taking the stable limit $n\to \infty$, one obtains the infinite sum $G_{(1)}(x)=e_1(x)+\beta e_2(x)+\beta^2e_3(x)+\cdots$.
Here $e_i(x)=e_i(x_1,x_2,\dots)$ is the $i$-th elementary symmetric function in infinitely many variables.
We note the fact that a stable Grothendieck polynomial is not a polynomial in general but an infinite series of symmetric functions.
It reduces to a symmetric polynomial in $n$ variables when we substitute $x_{n+1}=x_{n+2}=\cdots=0$.

In \cite{iwao2020freefermion}, the author of this paper introduced a new characterization of the stable Grothendieck polynomials in terms of the \textit{boson-fermion correspondence}~ \cite{date1983method,miwa2012solitons} (see also \cite{alexandrov2013free}).
It was shown that straightforward calculations based on this characterization lead some determinantal formulas for the stable Grothendieck polynomials.

Our aim in this paper is to generalize the previous result to ($\beta$-)skew stable Grothendieck polynomials~\cite{buch2002littlewood} and their dual symmetric functions~\cite{lam2007combinatorial}.
The goal of this work is to present:
\begin{itemize}
\item a free-fermionic presentation of two types of skew stable Grothendieck polynomials $G_{\lambda\slsl\mu}(x)$ and $G_{\lambda/\mu}(x)$ (Theorems \ref{thm:G//}, \ref{thm:G/}),
\item a free-fermionic presentation of skew dual stable Grothendieck polynomials $g_{\lambda/\mu}(x)$ (Theorem \ref{thm:g/}),
\item determinantal formulas for the symmetric functions $G_{\lambda\slsl \mu}(x)$, $G_{\lambda/\mu}(x)$, $g_{\lambda/\mu}(x)$, $s_\mu^\perp G_\lambda(x)$, $s_\mu^\perp g_\lambda(x)$, etc. (Propositions \ref{prop:det_G//}, \ref{prop:det_G/}, \ref{prop:det_G//_finite}, \ref{prop:det_G/_finite}, \ref{prop:det_g} and Remark \ref{rem:various}),
\item a combinatorial method to calculate the $G_{\lambda/\mu}$-expansion of $s_\nu G_{\lambda/\mu}(x)$, and the $g_{\lambda/\mu}$-expansion of $s_\nu g_{\lambda/\mu}(x)$ (Sections \ref{sec:G-expan} and \ref{sec:g-expan}).
\end{itemize}

\subsection{Relation to Previous research}

A skew stable Grothendieck polynomial $G_{\lambda/\mu} $ is known as a representative of a Schubert variety corresponding to a 321-avoiding permutation in the $K$-theory of the flag variety~\cite{buch2002littlewood,fomin1994grothendieck}.
Their combinatorial presentation was studied in Buch's paper \cite{buch2002littlewood}, in which an explicit formula for $G_{\lambda/\mu}$ was given in terms of \textit{skew set-valued tableaux}.
He also introduced the symmetric function $G_{\lambda\slsl\mu}$, which is another generalization of a skew Schur function.

Many types of determinantal formulas for skew Grothendieck polynomials and their generalizations have been proved in the context of the geometry of the flag bundle ~\cite{anderson2017k,HUDSON2017115,hudson2019segre,kim2020jacobitrudi,matsumura2019flagged}.
Hudson-Ikeda-Matsumura-Naruse~\cite{HUDSON2017115} derived a determinantal formula for the \textit{factorial Grothendieck polynomials} by studying the \textit{connective $K$-theory} of the Grassmann bundle.
Our determinantal formula for $G_{\lambda/\mu}$ (Proposition \ref{prop:det_G/}) is a special case of that given by Matsumura~\cite{matsumura2019flagged} for the \textit{flagged Grothendieck polynomials}.
These formulas are also studied in relation to $K$-theoretic Brill-Noether theorem~\cite{anderson2017k,chan2017euler}.

In the study of these symmetric functions, 
the \textit{non-commutative Schur function}~\cite{yeliussizov2017duality,YELIUSSIZOV2019453} is a very useful tool to describe their combinatorial aspects.
In recent works, Kim~\cite{kim2020jacobitrudi} and Amanov-Yeliussizov~\cite{amanov2020determinantal} have been developing original techniques to study a quite wider class of symmetric functions in a purely combinatorial way.
They independently proved a determinantal formula for $g_{\lambda/\mu}$, which is different from Proposition \ref{prop:det_g}.

It is worth noting that the Grothendieck polynomials have appeared in studies of \textit{classical and quantum integrable systems}. 
Motegi-Sakai~\cite{motegi2013vertex,motegi2014k} proved that the Grothendieck polynomials (and their variants) can be derived as wave functions of certain quantum integrable systems.
In their works, many algebraic formulas, such as determinantal formulas, were shown in the context of the quantum mechanics.
Nagai and the author of this paper~\cite{iwao2018discrete} showed that the \textit{discrete Toda equation}~\cite{hirota1977} admits a special solution that is written as a dual stable Grothendieck polynomial.
For recent developments on this topic, see Ikeda-Iwao-Maeno~\cite{ikeda2017peterson} and Motegi~\cite{motegi2018symmetric}.

\subsection{Organization of the paper}

In Section \ref{sec:free_fermions}, we give a quick review of basic facts about the boson-fermion correspondence.
Some propositions are stated without proof.
For readers who are not familiar with this topic, we recommend the review article \cite{alexandrov2013free} by Alexandrov-Zabrodin.
We also include some useful lemmas which will be used in the later sections.

In Sections \ref{sec:Gr}--\ref{sec:skewdual}, we deal with free-fermionic presentations of symmetric functions.
Section \ref{sec:Gr} is a short review of \cite{iwao2020freefermion}, a previous work of the author of this paper, in which the author gives a free-fermionic presentation of the stable Grothendieck polynomials $G_\lambda$ and their duals $g_\lambda$.
In Section \ref{sec:skew}, we extend our method to the skew stable Grothendieck polynomials $G_{\lambda\slsl \mu}$, $G_{\lambda/\mu}$, and give a free-fermionic presentation of them.
By virtue of this presentation, it becomes a straightforward process to obtain their determinantal formulas. 
We prove in Section \ref{sec:skewdual} that similar results are also valid for the skew dual Grothendieck polynomials $g_{\lambda/\mu}$.

In Sections \ref{sec:noncommSchur}--\ref{sec:expansion_of_s^perp}, we introduce a combinatorial method for calculating a $G_{\lambda/\mu}$-expansion and a $g_{\lambda/\mu}$-expansion of Schur polynomials.
Our method relies on the theory of \textit{non-commutative Schur functions}.
In Section \ref{sec:noncommSchur}, we define the non-commutative Schur functions $s_\lambda(\bm{u}_r)$, $s_\lambda(\bm{v}_r)$, \textit{etc}.~ and calculate their actions on the Fock space $\mathcal{F}$.
In the following sections, we construct a new algorithm to obtain a $G_{\lambda/\mu}$-expansion of $s_\nu G_{\lambda/\mu}$(Section \ref{sec:G-expan}) and a $g_{\lambda/\mu}$-expansion of $s_\nu g_{\lambda/\mu}$ (Section \ref{sec:g-expan}). 
We also consider the symmetric functions of the form $s^\perp_\nu G_{\lambda/\mu}$, $s^\perp_\nu g_{\lambda/\mu}$ in Section \ref{sec:expansion_of_s^perp}.

\section{boson-fermion correspondence}\label{sec:free_fermions}

In this section, we give a brief review of basic facts about boson-fermion correspondence.
Throughout the paper, we write $[A,B]=AB-BA$ and  $[A,B]_+=AB+BA$.

\subsection{The module of free fermions $\mathcal{A}$ and the Fock space $\mathcal{F}$}\label{sec:prelim}

For a field $k$ of characteristic $0$, we consider a $k$-algebra $\mathcal{A}$ generated by \textit{free fermions} $\psi_n$, $\psi_n^\ast$ ($n\in \ZZ$) which satisfies
\begin{equation}\label{eq:free-fermions-relation}
[\psi_m,\psi_n]_+=[\psi^\ast_m,\psi^\ast_n]_+=0,\qquad
[\psi_m,\psi^\ast_n]_+=\delta_{m,n}.
\end{equation}

Let $\ket{0}$, $\bra{0}$ denote the \textit{vacuum vectors}:
\[
\psi_m\ket{0}=\psi^\ast_n\ket{0}=0,\quad
\bra{0}\psi_n=\bra{0}\psi^\ast_m=0,\qquad m< 0,\ n\geq 0.
\]
The \textit{Fock space} (over $k$) is the $k$-space $\mathcal{F}$ generated by the vectors
\begin{equation}\label{eq:elementary-vactors}
\psi_{n_1}\psi_{n_2}\cdots \psi_{n_r}\psi^\ast_{m_1}\psi^\ast_{m_2}\cdots \psi^\ast_{m_s}\ket{0},\
(r,s\geq 0,\ n_1>\dots>n_r\geq 0>m_s>\dots>m_1).
\end{equation}
We also consider the $k$-space $\mathcal{F}^\ast$ generated by the vectors
\begin{equation}\label{eq:elementary-vetcors-dual}
\bra{0}\psi_{m_s}\cdots \psi_{m_2}\psi_{m_1}\psi^\ast_{n_r}\cdots \psi^\ast_{n_2}\psi^\ast_{n_1},\
(r,s\geq 0,\ n_1>\dots>n_r\geq 0>m_s>\dots>m_1).
\end{equation}

It is known that the vectors \eqref{eq:elementary-vactors} are linearly independent (see, for example, \cite[\S 4 and \S 5.2]{kac2013bombay}).
The vectors \eqref{eq:elementary-vetcors-dual} are also linearly independent.
Using \eqref{eq:free-fermions-relation} repeatedly, we can prove that $\mathcal{F}$ is a left $\mathcal{A}$-module, and that $\mathcal{F}^\ast$ is a right $\mathcal{A}$-module.

There exists an anti-algebra involution on $\mathcal{A}$ defined by
\[
{}^\ast:\mathcal{A}\to \mathcal{A};\quad \psi_n\leftrightarrow \psi_n^\ast.
\]
Note that $(ab)^\ast=b^\ast a^\ast$ and $(a^\ast)^\ast=a$.
Therefore we have the $k$-linear isomorphism
\[
\omega:\mathcal{F}\to {\mathcal{F}}^\ast,\quad X\ket{0}\mapsto \bra{0}{X}^\ast.
\]

\subsection{Vacuum expectation value and Wick's theorem}\label{sec:Wick}

Let  
\[
{\mathcal{F}}^\ast\otimes_k\mathcal{F}\to k,\quad 
\bra{w}\otimes \ket{v}\mapsto \langle{w}\vert v\rangle,
\]
be the \textit{vacuum expectation value} \cite[\S 4.5]{miwa2012solitons}, that is, the unique $k$-bilinear map that satisfies (i) $\langle 0\vert 0\rangle=1$,
(ii) $(\bra{w}\psi_n) \ket{v}=\bra{w} (\psi_n\ket{v})$,
and
(iii) $(\bra{w}\psi_n^\ast) \ket{v}=\bra{w} (\psi_n^\ast\ket{v})$.
For any expression $X$, we often use the abbreviations
$\bra{w}X\ket{v}:=(\langle{w}\vert X)\ket{v}
=\bra{w}(\vert X\ket{v})
$ and $\langle X\rangle=\bra{0}X\ket{0}$.

For an integer $m$,  we set
\[
\ket{m}=
\begin{cases}
\psi_{m-1}\psi_{m-2}\cdots \psi_0\ket{0}, & m\geq 0,\\
\psi^\ast_{m} \cdots\psi^\ast_{-2}\psi^\ast_{-1}\ket{0}, & m<0,
\end{cases}\quad
\bra{m}=
\begin{cases}
\bra{0}\psi^\ast_0\psi^\ast_1\dots \psi^\ast_{m-1}, & m\geq 0,\\
\bra{0}\psi_{-1}\psi_{-2}\dots \psi_{m}, & m<0.
\end{cases}
\]

\begin{thm}[Wick's theorem
(see {\cite[\S 2]{alexandrov2013free}, \cite[Exercise 4.2]{miwa2012solitons}})
]\label{thm:Wick}
Let $\{m_1,\dots,m_r\}$ and $\{n_1,\dots,n_{r}\}$ be sets of integers.
Then we have
\[
\langle 
\psi_{m_1}\cdots\psi_{m_{r}}
\psi^\ast_{n_r}\cdots\psi^\ast_{n_{1}}
\rangle
=\det(\langle \psi_{m_i}\psi^\ast_{n_j} \rangle)_{1\leq i,j\leq r}.
\]
\end{thm}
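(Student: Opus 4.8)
The plan is to argue by induction on $r$, peeling off the leftmost fermion $\psi_{m_1}$ and anticommuting it to the right until it annihilates the ket vacuum, thereby reproducing the Laplace (cofactor) expansion of the determinant along its first row. Before starting the induction I would record the elementary two-point values. Moving $\psi_{m}$ past a single $\psi_{n}^\ast$ and using $\psi_{m}\ket{0}=0$ for $m<0$ together with $\bra{0}\psi_m=0$ for $m\geq 0$, one finds
\[
\langle \psi_{m}\psi_{n}^\ast\rangle=
\begin{cases}
\delta_{m,n}, & m<0,\\
0, & m\geq 0.
\end{cases}
\]
In particular the sign of $m_1$ governs the entire first row of the matrix $(\langle\psi_{m_i}\psi_{n_j}^\ast\rangle)$, which foreshadows the case split below.

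Next I would carry out the main anticommutation. Using \eqref{eq:free-fermions-relation} repeatedly in the form
\[
\psi_{m_1}A_1\cdots A_N=\sum_{k=1}^{N}(-1)^{k-1}[\psi_{m_1},A_k]_+\,A_1\cdots\widehat{A_k}\cdots A_N+(-1)^N A_1\cdots A_N\,\psi_{m_1},
\]
with $A_1\cdots A_N=\psi_{m_2}\cdots\psi_{m_r}\psi_{n_r}^\ast\cdots\psi_{n_1}^\ast$, only the anticommutators against the $\psi^\ast$'s survive, since $[\psi_{m_1},\psi_{m_i}]_+=0$. The operator $\psi_{n_j}^\ast$ sits in position $2r-j$ of the string, so the contraction $[\psi_{m_1},\psi_{n_j}^\ast]_+=\delta_{m_1,n_j}$ appears with sign $(-1)^{2r-j-1}=(-1)^{j+1}$, leaving behind exactly $\psi_{m_2}\cdots\psi_{m_r}\psi_{n_r}^\ast\cdots\widehat{\psi_{n_j}^\ast}\cdots\psi_{n_1}^\ast$.

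Then I would split on the sign of $m_1$. If $m_1\geq 0$, then $\bra{0}\psi_{m_1}=0$, so the vacuum expectation vanishes; simultaneously the whole first row of the matrix is zero, so the determinant vanishes, and the two sides agree. If $m_1<0$, then $\psi_{m_1}\ket{0}=0$ kills the trailing term $(-1)^N A_1\cdots A_N\,\psi_{m_1}$, and taking $\langle 0|\,\cdot\,|0\rangle$ leaves
\[
\langle \psi_{m_1}\cdots\psi_{m_r}\psi_{n_r}^\ast\cdots\psi_{n_1}^\ast\rangle
=\sum_{j=1}^{r}(-1)^{j+1}\langle\psi_{m_1}\psi_{n_j}^\ast\rangle\,
\langle \psi_{m_2}\cdots\psi_{m_r}\psi_{n_r}^\ast\cdots\widehat{\psi_{n_j}^\ast}\cdots\psi_{n_1}^\ast\rangle,
\]
where I have used $\delta_{m_1,n_j}=\langle\psi_{m_1}\psi_{n_j}^\ast\rangle$, valid for $m_1<0$. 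Each surviving bracket on the right is the vacuum expectation attached to the index sets $\{m_2,\dots,m_r\}$ and $\{n_1,\dots,n_r\}\setminus\{n_j\}$, so by the inductive hypothesis it equals the $(1,j)$-minor of $(\langle\psi_{m_i}\psi_{n_j}^\ast\rangle)$. The displayed identity is then precisely the cofactor expansion along the first row, completing the induction (base case $r=0$ reading $\langle 0|0\rangle=1$).

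The routine parts are the two-point computation and the inductive bookkeeping; the one place I expect to need care is the sign accounting — confirming that the position $2r-j$ of $\psi_{n_j}^\ast$ yields exactly the cofactor sign $(-1)^{j+1}$, and that the trailing pass-through term $A_1\cdots A_N\,\psi_{m_1}$ genuinely drops out. The latter is what forces the case distinction on the sign of $m_1$: it is the annihilation property $\psi_{m_1}\ket{0}=0$ for $m_1<0$ (matched by the vanishing of an all-zero first row when $m_1\geq 0$) that makes the recursion close cleanly into a determinant rather than leaving an uncontrolled remainder.
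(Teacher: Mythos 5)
Your proof is correct. Note that the paper itself gives no proof of this theorem: it is stated with citations to Alexandrov--Zabrodin and to Miwa--Jimbo--Date (Exercise 4.2), so there is no internal argument to compare against; your induction is precisely the standard proof those references intend. The key mechanics all check out: the two-point values $\langle\psi_m\psi^\ast_n\rangle=\delta_{m,n}$ for $m<0$ and $0$ for $m\geq 0$ follow from the vacuum annihilation rules; the expansion
\[
\psi_{m_1}A_1\cdots A_N=\sum_{k=1}^{N}(-1)^{k-1}[\psi_{m_1},A_k]_+\,A_1\cdots\widehat{A_k}\cdots A_N+(-1)^N A_1\cdots A_N\,\psi_{m_1}
\]
is legitimate because the anticommutators $[\psi_{m_1},A_k]_+$ are scalars (hence central), which is exactly what lets you pull them out front; the sign $(-1)^{2r-j-1}=(-1)^{j+1}$ matches the cofactor sign; and the case split on the sign of $m_1$ correctly handles both the vanishing of the pass-through term ($m_1<0$, via $\psi_{m_1}\ket{0}=0$) and the degenerate case ($m_1\geq 0$, where $\bra{0}\psi_{m_1}=0$ forces the left side to vanish while the all-zero first row forces the determinant to vanish). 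The surviving bracket after contraction retains the required shape $\psi_{m_2}\cdots\psi_{m_r}\psi^\ast_{n_r}\cdots\widehat{\psi^\ast_{n_j}}\cdots\psi^\ast_{n_1}$, so the inductive hypothesis applies and yields the $(1,j)$-minor, closing the Laplace expansion.
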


For $m=\{m_1,\dots,m_r\}$ and $n=\{n_1,\dots,n_s\}$ with $m_1>\dots>m_r$, $n_1>\dots>n_s$, we write
\[
\delta_{m,n}=
\begin{cases}
1,& r=s \mbox{ and }m_i=n_i \mbox{ for all $i$},\\
0,&\mbox{otherwise}.
\end{cases}
\]
\begin{cor}\label{cor:dual}
Assume that $m_1>\dots>m_{r}\geq -r$ and $n_1>\dots>n_{s}\geq -s$.
Then we have
\[
\bra{-r}
\psi^\ast_{m_r}\cdots\psi^\ast_{m_{1}}
\psi_{n_1}\cdots\psi_{n_{s}}
\ket{-s}
=\delta_{m,n},
\]
where $m=\{m_1,\dots,m_r\}$, $n=\{n_1,\dots,n_s\}$.
\end{cor}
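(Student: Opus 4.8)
The plan is to turn the left-hand side into a single vacuum expectation value and then read it off from Wick's theorem. First I would substitute the explicit expressions $\bra{-r}=\bra{0}\psi_{-1}\psi_{-2}\cdots\psi_{-r}$ and $\ket{-s}=\psi^\ast_{-s}\cdots\psi^\ast_{-1}\ket{0}$ coming from the definitions, so that the quantity in question becomes
\[
\langle \psi_{-1}\cdots\psi_{-r}\,\psi^\ast_{m_r}\cdots\psi^\ast_{m_1}\,\psi_{n_1}\cdots\psi_{n_s}\,\psi^\ast_{-s}\cdots\psi^\ast_{-1}\rangle,
\]
a vacuum expectation value of $2(r+s)$ free fermions, exactly half of them of type $\psi$ and half of type $\psi^\ast$.

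Before invoking Theorem \ref{thm:Wick} I would rewrite the two halves as standard basis vectors. Using \eqref{eq:free-fermions-relation} and the vacuum conditions one checks that $\langle\psi_a\psi^\ast_b\rangle=\delta_{a,b}$ when $a<0$ and $\langle\psi_a\psi^\ast_b\rangle=0$ when $a\geq 0$ (and dually $\langle\psi^\ast_a\psi_b\rangle=\delta_{a,b}$ for $a\geq 0$). The key structural observation is that, because the indices are strictly decreasing and bounded below by the hypotheses $n_1>\cdots>n_s\geq -s$ and $m_1>\cdots>m_r\geq -r$, the product $\psi_{n_1}\cdots\psi_{n_s}\ket{-s}$ is already one of the basis vectors \eqref{eq:elementary-vactors}, with coefficient $1$: each $\psi_{n_j}$ with $n_j\geq 0$ creates a new state, while those with $-s\leq n_j\leq -1$ simply fill one of the holes produced by $\psi^\ast_{-s},\dots,\psi^\ast_{-1}$, and no reordering sign is incurred. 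Applying the anti-involution ${}^\ast$ and the isomorphism $\omega$, the bra $\bra{-r}\psi^\ast_{m_r}\cdots\psi^\ast_{m_1}$ is the image under $\omega$ of $\psi_{m_1}\cdots\psi_{m_r}\ket{-r}$, which is likewise a basis vector \eqref{eq:elementary-vactors} with coefficient $1$.

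It then remains to pair two standard basis vectors. Wick's theorem (Theorem \ref{thm:Wick}), applied to the resulting canonically ordered expectation value, produces the determinant of a matrix of two-point functions which, by the computation above, has entries in $\{0,1\}$; the hypotheses force this determinant to vanish unless the occupied indices of the two vectors coincide, in which case the matrix is the identity and the determinant is $1$. Equivalently, Theorem \ref{thm:Wick} shows that the vectors \eqref{eq:elementary-vactors} and their duals \eqref{eq:elementary-vetcors-dual} are orthonormal for the vacuum pairing, and the two vectors above coincide precisely when $m=n$. This yields the value $\delta_{m,n}$.

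The step I expect to be most delicate is the reduction in the middle paragraph: one must verify that $\psi_{n_1}\cdots\psi_{n_s}\ket{-s}$ (and its $m$-counterpart) really is a single basis vector carrying coefficient exactly $+1$ rather than a signed sum, and it is here that the lower bounds $n_s\geq -s$, $m_r\geq -r$ are essential, since they guarantee that every $\psi_{n_j}$ lands on an empty slot of $\ket{-s}$ so that no contraction terms or transposition signs appear. The remaining subtlety is purely combinatorial: matching the set-theoretic equality of the occupied indices with the condition $r=s$, $m_i=n_i$ built into $\delta_{m,n}$.
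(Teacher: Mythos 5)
The paper gives no written proof of Corollary~\ref{cor:dual} (it is presented as immediate from Theorem~\ref{thm:Wick}), so your proposal is compared against the implied derivation; your route --- substitute the fermionic expressions for $\bra{-r}$ and $\ket{-s}$, reduce bra and ket to basis vectors, then invoke Wick --- is indeed the natural reconstruction of it. However, the step you yourself flag as delicate is false as stated: filling a hole \emph{does} incur a transposition sign. Take $s=2$ and $(n_1,n_2)=(1,-1)$, which satisfies $n_1>n_2\geq -2$; then
\[
\psi_{1}\psi_{-1}\ket{-2}
=\psi_{1}\psi_{-1}\psi^\ast_{-2}\psi^\ast_{-1}\ket{0}
=-\psi_{1}\psi^\ast_{-2}\psi_{-1}\psi^\ast_{-1}\ket{0}
=-\psi_{1}\psi^\ast_{-2}\ket{0},
\]
because $\psi_{-1}$ must anticommute past $\psi^\ast_{-2}$ before it cancels $\psi^\ast_{-1}$. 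So the ket is $(-1)$ times a basis vector, not $(+1)$ times one, and the same happens on the bra side. This gap is repairable, but only by invoking a symmetry your write-up never states: writing $\psi_{n_1}\cdots\psi_{n_s}\ket{-s}=\epsilon_n\ket{v_n}$ with $\epsilon_n\in\{\pm 1\}$ and $\ket{v_n}$ of the form \eqref{eq:elementary-vactors}, the bra of the corollary equals $\omega\bigl(\psi_{m_1}\cdots\psi_{m_r}\ket{-r}\bigr)=\epsilon_m\,\omega(\ket{v_m})$ with the \emph{same} sign function $\epsilon$ (linearity of $\omega$), so the pairing is $\epsilon_m\epsilon_n\,\omega(\ket{v_m})(\ket{v_n})$ and the signs square to $1$ whenever the two basis vectors agree. (Also note that $\omega(\ket{v_m})(\ket{v_n})$ is a word of shape $\psi\cdots\psi\,\psi^\ast\cdots\psi^\ast\,\psi\cdots\psi\,\psi^\ast\cdots\psi^\ast$, which is not the two-block shape covered by Theorem~\ref{thm:Wick}; one needs the general form of Wick's theorem, or an induction on the anticommutation relations, to get the orthonormality you use.)

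There is a second gap in your closing ``purely combinatorial'' step, and this one cannot be closed, because the identification it asserts is false: distinct admissible sequences can produce the same basis vector. Indeed $\psi_{-1}\ket{-1}=\ket{0}=\psi_{-1}\psi_{-2}\ket{-2}$, so $m=(-1)$ (with $r=1$) and $n=(-1,-2)$ (with $s=2$) satisfy all hypotheses and give $v_m=v_n=\ket{0}$, yet $\delta_{m,n}=0$ since $r\neq s$. For this pair one checks directly that $\bra{-1}\psi^\ast_{-1}=\bra{0}$, hence
\[
\bra{-1}\psi^\ast_{-1}\,\psi_{-1}\psi_{-2}\ket{-2}=\langle 0\vert 0\rangle=1\neq \delta_{m,n},
\]
so in this degenerate case the corollary as printed fails as well: set-theoretic equality of the occupied modes is equivalent not to ``$r=s$ and $m_i=n_i$'' but to equality of the associated partitions $\lambda_i=m_i+i$, $\mu_j=n_j+j$ after discarding trailing zeros (equivalently, of the Maya diagrams $m\cup\ZZ_{<-r}$ and $n\cup\ZZ_{<-s}$). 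What the sign-repaired Wick argument actually proves is that the pairing equals $\delta_{\lambda,\mu}$ in this sense; the printed conclusion $\delta_{m,n}$ is correct only under an additional hypothesis such as $r=s$ or $m_r,n_s\geq 0$. Since Corollary~\ref{cor:dual} is not invoked elsewhere in the paper, nothing downstream is affected, but as written your proof establishes neither the printed statement (false in the case above) nor the corrected one (the sign bookkeeping is missing).
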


\subsection{The boson-fermion correspondence}\label{sec:boson-fermion}

Let $:\bullet :$ be the \textit{normal ordering} (see \cite[\S 2]{alexandrov2013free}, \cite[\S 5.2]{miwa2012solitons}) of free-fermions.
We define the operator $a_m$ ($m\in \ZZ$) by $a_m=\sum_{k\in \ZZ} :\psi_k\psi^\ast_{k+m}:$ which acts on the Fock space $\mathcal{F}$.
The operators $a_m$ satisfy the following commutative relations
\begin{equation}\label{eq:relation_added}
[a_m,a_n]=m\delta_{m+n,0},\qquad
[a_m,\psi_n]=\psi_{n-m},\qquad
[a_m,\psi^\ast_n]=-\psi^\ast_{n+m}.
\end{equation}
See \cite[\S 5.3]{miwa2012solitons} for proofs.

For an infinitely many independent variables $x_1,x_2,\dots$, we let $\Lambda$ denote the $k$-algebra of symmetric functions in $x_1,x_2,\dots$.
Let $p_i(x)=x_1^i+x_2^i+\cdots\in \Lambda$ be the \textit{$i$-th power sum}.
Consider the operator
\[
H(x)=\sum_{n>0}\frac{p_n(x)}{n}a_n,
\]
which acts on the $k$-space $\Lambda\otimes_k \mathcal{F}$.
By using \eqref{eq:relation_added}, we can prove the commutative relation
\begin{equation}\label{eq:exp_H_psi}
e^{H(x)}\psi_n=\left(\sum_{i=0}^\infty h_i(x)\psi_{n-i}\right)e^{H(x)},
\end{equation}
where $h_i(x)$ is the \textit{$i$-th complete symmetric function}.

\begin{thm}[{\cite[Lemma 9.5]{miwa2012solitons}}, {\cite[Theorem 6.1]{kac2013bombay}}]\label{thm:Schur}
Let $\lambda=(\lambda_1,\lambda_2,\dots,\lambda_r)$ be a partition, that is, a weakly decreasing sequence of nonnegative integers.
Set $\ket{\lambda}:=\psi_{\lambda_1-1}\psi_{\lambda_2-2}\cdots\psi_{\lambda_r-r}\ket{-r}$.
Then, we have $s_\lambda(x)=\bra{0}e^{H(x)}\ket{\lambda}$, where $s_\lambda(x)$ is the \textit{Schur function} labeled by $\lambda$,
\end{thm}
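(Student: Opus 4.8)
The plan is to conjugate the fermions through $e^{H(x)}$ so that it comes to rest on the charged vacuum $\ket{-r}$, where it acts as the identity, and then to read off the surviving vacuum expectation value as a Jacobi--Trudi determinant by means of Wick's theorem.

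First I would iterate \eqref{eq:exp_H_psi} to introduce the ``dressed'' fermions
\[
\tilde{\psi}_n:=e^{H(x)}\psi_n e^{-H(x)}=\sum_{i\geq 0}h_i(x)\psi_{n-i},
\]
whose $x$-dependence sits entirely in the coefficients $h_i(x)\in\Lambda$. Moving $e^{H(x)}$ to the right past all $r$ factors of $\ket{\lambda}$ and expanding $\ket{-r}=\psi^\ast_{-r}\cdots\psi^\ast_{-1}\ket{0}$ turns the claim into the assertion
\[
\bra{0}e^{H(x)}\ket{\lambda}=\bra{0}\tilde{\psi}_{\lambda_1-1}\cdots\tilde{\psi}_{\lambda_r-r}\,\psi^\ast_{-r}\cdots\psi^\ast_{-1}\ket{0},
\]
provided that $e^{H(x)}\ket{-r}=\ket{-r}$. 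This last identity is the first thing I would verify: since $H(x)$ is a combination of the bosonic modes $a_n$ with $n>0$, it suffices to show $a_n\ket{-r}=0$ for $n>0$, and this follows by pushing $a_n$ to the right through $\psi^\ast_{-r}\cdots\psi^\ast_{-1}$ using $[a_n,\psi^\ast_m]=-\psi^\ast_{n+m}$ from \eqref{eq:relation_added}: every resulting term either repeats a $\psi^\ast$ index (and dies by \eqref{eq:free-fermions-relation}) or leaves some $\psi^\ast_{k}$ with $k\geq 0$ acting on $\ket{0}$.

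Next I would evaluate the right-hand side. The vacuum expectation value is multilinear, so expanding each $\tilde{\psi}$ into its $\psi$-components, applying Theorem \ref{thm:Wick} to every term, and then recombining the rows by multilinearity of the determinant gives
\[
\bra{0}\tilde{\psi}_{\lambda_1-1}\cdots\tilde{\psi}_{\lambda_r-r}\,\psi^\ast_{-r}\cdots\psi^\ast_{-1}\ket{0}
=\det\left(\langle\tilde{\psi}_{\lambda_i-i}\,\psi^\ast_{-j}\rangle\right)_{1\leq i,j\leq r}.
\]
A short computation from the anticommutation relations \eqref{eq:free-fermions-relation} and the vacuum conditions shows $\langle\psi_m\psi^\ast_n\rangle=\delta_{m,n}$ when $n<0$, whence the two-point function collapses to a single complete symmetric function, $\langle\tilde{\psi}_m\psi^\ast_n\rangle=h_{m-n}(x)$. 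Taking $m=\lambda_i-i$ and $n=-j$ identifies the matrix entry as $h_{\lambda_i-i+j}(x)$, so the determinant is exactly the Jacobi--Trudi expression $\det(h_{\lambda_i-i+j})_{1\leq i,j\leq r}=s_\lambda(x)$, which finishes the proof.

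The computations themselves are routine; the two places that call for care are the vanishing $a_n\ket{-r}=0$, where the specific structure of the charged vacuum is what makes $e^{H(x)}$ act trivially, and the passage from the termwise Wick expansion to a single determinant. The latter is the main point to justify: the dressed fermions are infinite sums, so one must check that the interchange of summation with the determinant is legitimate. This is in fact harmless here, because each matrix entry $\langle\tilde{\psi}_m\psi^\ast_n\rangle$ collapses to the single term $h_{m-n}$ and no genuine convergence issue arises, but it deserves an explicit remark. The final identification relies only on the classical Jacobi--Trudi formula for $s_\lambda$.
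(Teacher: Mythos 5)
Your proof is correct. Note that the paper itself gives no proof of this statement --- it is one of the results quoted from the literature (Miwa--Jimbo--Date, Lemma 9.5; Kac--Raina, Theorem 6.1) --- and your argument is essentially the standard one found there: verify $e^{H(x)}\ket{-r}=\ket{-r}$ via $a_n\ket{-r}=0$, conjugate to the dressed fermions $\sum_i h_i(x)\psi_{n-i}$ using \eqref{eq:exp_H_psi}, apply Wick's theorem (Theorem \ref{thm:Wick}) together with $\langle\psi_m\psi^\ast_n\rangle=\delta_{m,n}$ for $n<0$ to obtain $\det\left(h_{\lambda_i-i+j}(x)\right)_{1\leq i,j\leq r}$, and conclude by Jacobi--Trudi. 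All the individual steps check out, including the point you flag: the termwise Wick expansion is legitimate because only finitely many terms of each dressed fermion survive against the finite product $\psi^\ast_{-r}\cdots\psi^\ast_{-1}\ket{0}$.
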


\subsection{Hall inner product and adjoint action}

There uniquely exists a non-degenerate bilinear form $\langle \cdot,\cdot \rangle$ over $\Lambda$ that satisfies
$
\langle s_\lambda,s_\mu\rangle =\delta_{\lambda,\mu}
$,
which is called the \textit{Hall inner product} \cite[I,\S 4]{macdonald1998symmetric}.
Given $f\in \Lambda$, let $f^\perp:\Lambda\to \Lambda$ be the unique $k$-linear map that satisfies $\langle f^\perp g,h\rangle=\langle g,fh\rangle$ for any $g,h\in \Lambda$.
\begin{thm}[{\cite[Theorem 5.1]{miwa2012solitons}}]\label{thm:an-fermion-adjoint}
Let $\ket{v}\in \mathcal{F}$ and $f=\bra{0}e^{H(x)}\ket{v}$.
Then we have 
\[
p_n^\perp f=\bra{0}e^{H(x)}a_{n}\ket{v}.
\]
\end{thm}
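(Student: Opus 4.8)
The plan is to reduce the statement to two essentially independent facts: a purely algebraic description of the operator $p_n^\perp$ on $\Lambda$, and a direct differentiation of the generating operator $e^{H(x)}$ using the commutation relations \eqref{eq:relation_added}. The bridge between them is that both sides are controlled by the partial derivative $\partial/\partial p_n$ in the power-sum variables, since $H(x)=\sum_{m>0}\frac{p_m(x)}{m}a_m$ is linear in the $p_m(x)$.

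First I would establish the standard identification $p_n^\perp=n\,\partial/\partial p_n$ as operators on $\Lambda=k[p_1,p_2,\dots]$. Because the $\{p_\lambda\}$ form an orthogonal basis for the Hall inner product with $\langle p_\lambda,p_\mu\rangle=\delta_{\lambda,\mu}z_\lambda$, it suffices to verify the defining relation $\langle p_n^\perp g,h\rangle=\langle g,p_n h\rangle$ on $g=p_\lambda$, $h=p_\mu$. The left-hand side, with the candidate operator, is $\langle n\,\partial_{p_n}p_\lambda,p_\mu\rangle=n\,m_n(\lambda)\,z_\mu\,\delta_{\lambda\setminus n,\mu}$, where $m_n$ counts parts equal to $n$; the right-hand side is $\langle p_\lambda,p_{\mu\cup n}\rangle=z_{\mu\cup n}\,\delta_{\lambda,\mu\cup n}$. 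These agree via the elementary identity $z_{\mu\cup n}=n\,(m_n(\mu)+1)\,z_\mu$, and non-degeneracy of the form then forces $p_n^\perp=n\,\partial/\partial p_n$.

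Next I would differentiate the fermionic side. Since $[a_m,a_{m'}]=m\,\delta_{m+m',0}$ vanishes whenever $m,m'>0$, all operators occurring in $H(x)$ commute with one another, and in particular $\partial H/\partial p_n=a_n/n$ commutes with $H(x)$. Hence $e^{H(x)}$ may be differentiated as an ordinary exponential of commuting quantities, giving
\[
\frac{\partial}{\partial p_n}\,e^{H(x)}=\frac{a_n}{n}\,e^{H(x)}.
\]
Applying $\partial/\partial p_n$ to $f=\bra{0}e^{H(x)}\ket{v}$ and passing the scalar derivative inside the vacuum expectation value yields $\partial f/\partial p_n=\frac{1}{n}\bra{0}e^{H(x)}a_n\ket{v}$. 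Combining this with the first step,
\[
p_n^\perp f=n\,\frac{\partial f}{\partial p_n}=\bra{0}e^{H(x)}a_n\ket{v},
\]
which is exactly the assertion.

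I expect the only genuinely delicate point to be the first step, pinning down $p_n^\perp$ as $n\,\partial/\partial p_n$, since that is the one place where the Hall inner product's normalization $\langle s_\lambda,s_\mu\rangle=\delta_{\lambda,\mu}$ must be translated into the power-sum picture; everything on the fermionic side is a formal consequence of the commutation relations already recorded in \eqref{eq:relation_added}. I would also take care to justify the formal manipulations of $e^{H(x)}$, namely the convergence of the series and the legitimacy of interchanging $\partial/\partial p_n$ with the vacuum expectation value, in the degreewise-finite sense in which these generating operators are always understood.
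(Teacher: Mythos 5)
Your proof is correct. Note that the paper does not actually prove this statement---it is quoted directly from \cite[Theorem 5.1]{miwa2012solitons} (the section explicitly says some propositions are stated without proof)---so there is no internal argument to compare against; what you have written is essentially the standard proof of that cited bosonization result. Both halves of your argument are sound: the identification $p_n^\perp=n\,\partial/\partial p_n$ follows, exactly as you say, from the orthogonality $\langle p_\lambda,p_\mu\rangle=\delta_{\lambda,\mu}z_\lambda$ (Macdonald, I.4) together with $z_{\mu\cup n}=n\,(m_n(\mu)+1)\,z_\mu$ and non-degeneracy; and the differentiation $\partial e^{H(x)}/\partial p_n=(a_n/n)\,e^{H(x)}=e^{H(x)}(a_n/n)$ is legitimate because $[a_m,a_{m'}]=m\,\delta_{m+m',0}$ vanishes for $m,m'>0$, so $H(x)$ is a sum of commuting operators linear in the algebraically independent variables $p_m$. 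The analytic caveat you flag at the end is also easily discharged: for $\ket{v}$ a basis vector of $\mathcal{F}$ the pairing $\bra{0}H(x)^k\ket{v}$ vanishes for all $k$ larger than the energy of $\ket{v}$, so $f$ is a genuine polynomial in finitely many $p_m$ and the interchange of $\partial/\partial p_n$ with the vacuum expectation value is a finite, term-by-term operation.
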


Let $S_\lambda(P_1,P_2,\dots)$ be the polynomial in $P_1,P_2,\dots$ that satisfies
\begin{equation}\label{eq:Sato-Schur}
S_\lambda(p_1(x),p_2(x),\dots,)=s_\lambda(x).
\end{equation}
For example, we have $S_{(1)}=P_1$, $S_{(2)}=\frac{1}{2}P_1^2+\frac{1}{2}P_2$, and $S_{(1,1)}=\frac{1}{2}P_1^2-\frac{1}{2}P_2$.
Put $\bra{\lambda}:=\omega(\ket{\lambda})$.
From $S_\mu(a_{-1},a_{-2},\dots)\ket{0}=\ket{\mu}$ and $a_n^\ast=a_{-n}$, it follows that
\[
\bra{0}S_\mu(a_{1},a_{2},\dots)=\bra{\mu}.
\]
Let $s_{\lambda/\mu}(x)=s^\perp_\mu s_\lambda(x)$ be the \textit{skew Schur function} \cite[I,\S 5]{macdonald1998symmetric}.
As $[e^{H(x)},a_i]=0$ for $i>0$, we obtain
\begin{equation}\label{eq:skew-Schur-fermion}
s_{\lambda/\mu}(x)
=
\bra{0}e^{H(x)}S_\mu(a_1,a_2,\dots)\ket{\lambda}
=
\bra{\mu}e^{H(x)}\ket{\lambda}
\end{equation}
from Theorem \ref{thm:an-fermion-adjoint}.
Extending (\ref{eq:skew-Schur-fermion}) bilinearly, we have:
\begin{thm}\label{thm:adjoint_action_fermion}
For $\ket{v},\ket{w}\in\mathcal{F}$, let $f(x)=\bra{0}e^{H(x)}\ket{v}$ and $g(x)=\bra{0}e^{H(x)}\ket{w}$.
Then we have
\[
g^\perp f(x)=\bra{w}e^{H(x)}\ket{v},\qquad{\mbox{where}}\qquad \bra{w}=\omega(\ket{w}).
\]
\end{thm}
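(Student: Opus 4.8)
The plan is to deduce the identity from the already-established special case \eqref{eq:skew-Schur-fermion} by extending it bilinearly over the bases supplied by the boson--fermion correspondence. Equation \eqref{eq:skew-Schur-fermion} proves the claim when $\ket{v}=\ket{\lambda}$ and $\ket{w}=\ket{\mu}$ for partitions $\lambda,\mu$: there $f=s_\lambda$, $g=s_\mu$, $\bra{w}=\bra{\mu}$, and both sides equal $s_{\lambda/\mu}$. The whole content of the theorem is then the observation that every operation occurring on the two sides is $k$-linear in $\ket{v}$ and in $\ket{w}$, so the general case is forced by this base case.

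First I would record the structural fact, implicit in Theorem \ref{thm:Schur}, that the assignment $\ket{v}\mapsto \bra{0}e^{H(x)}\ket{v}$ sends the vectors $\ket{\lambda}$ to the Schur functions $s_\lambda$ and hence restricts to a $k$-linear isomorphism of $\mathrm{span}_k\{\ket{\lambda}\}$ onto $\Lambda$. (For $f=\bra{0}e^{H(x)}\ket{v}$ and $g=\bra{0}e^{H(x)}\ket{w}$ to be honest symmetric functions one works with $\ket{v},\ket{w}$ in this span; on basis vectors outside it the pairing $\bra{0}e^{H(x)}\ket{\cdot}$ vanishes, since the operators $a_n$ entering $H(x)$ preserve the grading of $\mathcal{F}$ by the difference of the numbers of $\psi$'s and $\psi^\ast$'s that $\bra{0}$ detects.) Consequently I may write $\ket{v}=\sum_\lambda c_\lambda\ket{\lambda}$ and $\ket{w}=\sum_\mu d_\mu\ket{\mu}$ with finitely many nonzero coefficients, so that $f=\sum_\lambda c_\lambda s_\lambda$ and $g=\sum_\mu d_\mu s_\mu$.

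It then remains to expand both sides and compare. On the left, the assignment $g\mapsto g^\perp$ is $k$-linear by the defining property of the adjoint and the nondegeneracy of the Hall pairing, and $f\mapsto g^\perp f$ is linear for fixed $g$, so I would obtain
\[
g^\perp f=\sum_{\mu,\lambda} d_\mu c_\lambda\, s_\mu^\perp s_\lambda=\sum_{\mu,\lambda} d_\mu c_\lambda\, s_{\lambda/\mu}.
\]
On the right, $\omega$ is $k$-linear, giving $\bra{w}=\sum_\mu d_\mu\bra{\mu}$ from $\bra{\mu}:=\omega(\ket{\mu})$, and the vacuum expectation value is $k$-bilinear, so
\[
\bra{w}e^{H(x)}\ket{v}=\sum_{\mu,\lambda} d_\mu c_\lambda\,\bra{\mu}e^{H(x)}\ket{\lambda}=\sum_{\mu,\lambda} d_\mu c_\lambda\, s_{\lambda/\mu},
\]
the last step being precisely \eqref{eq:skew-Schur-fermion}. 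The two expansions agree, which yields the claim.

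There is little genuine difficulty here: once \eqref{eq:skew-Schur-fermion} is available, the argument is purely formal. The only points that warrant care are the linearity of $g\mapsto g^\perp$, the identification of $\{\ket{\lambda}\}$ with $\{s_\lambda\}$ as corresponding bases (so that $f$ determines the coefficients $c_\lambda$), and the bookkeeping ensuring that restricting to $\mathrm{span}_k\{\ket{\lambda}\}$ loses nothing. I do not expect any serious obstacle beyond confirming these routine points.
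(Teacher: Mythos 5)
Your argument is precisely the paper's own proof: the paper disposes of this theorem with the single phrase ``Extending \eqref{eq:skew-Schur-fermion} bilinearly,'' and your expansion in the bases $\{\ket{\lambda}\}$, $\{\ket{\mu}\}$ combined with linearity of $g\mapsto g^\perp$, of $\omega$, and of the vacuum pairing is exactly that extension written out. One caution: the identity genuinely holds only for $\ket{v},\ket{w}$ in the charge-zero span $\mathrm{span}_k\{\ket{\lambda}\}$ (for $\ket{v}=\ket{w}=\psi_0\ket{0}$ one gets $g^\perp f=0$ while $\bra{w}e^{H(x)}\ket{v}=1$), so your parenthetical claim that components outside this span may be discarded harmlessly is the one inaccurate point --- discarding them changes the right-hand side even though it does not change $f$ and $g$; this caveat, however, is inherited from the paper's own statement, which is silent on it.
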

\begin{cor}\label{cor:adjoint_action}
Let $\ket{v},\ket{w}\in \mathcal{F}$ and $f=\bra{w}e^{H(x)}\ket{v}$.
Then we have 
\[
p_n^\perp f=\bra{w}e^{H(x)}a_{n}\ket{v}.
\]
\end{cor}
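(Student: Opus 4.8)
The plan is to deduce the statement from Theorem~\ref{thm:adjoint_action_fermion} and Theorem~\ref{thm:an-fermion-adjoint}, exploiting the fact that the adjoint operators $h^\perp$ form a commutative family. The corollary is precisely the generalization of Theorem~\ref{thm:an-fermion-adjoint} from the covacuum $\bra{0}$ to an arbitrary $\bra{w}=\omega(\ket{w})$, so the strategy is to peel off the dependence on $\bra{w}$, invoke the known $\bra{0}$-case, and then reattach $\bra{w}$.

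First I would set $\tilde{f}(x)=\bra{0}e^{H(x)}\ket{v}$ and $g(x)=\bra{0}e^{H(x)}\ket{w}$. Theorem~\ref{thm:adjoint_action_fermion} then identifies $f=\bra{w}e^{H(x)}\ket{v}=g^\perp\tilde{f}$. Next I would record the elementary fact that, because $\Lambda$ is commutative, the assignment $h\mapsto h^\perp$ is an algebra homomorphism: from $\langle (h_1 h_2)^\perp \alpha,\beta\rangle=\langle \alpha, h_1 h_2\beta\rangle$ for all $\alpha,\beta\in\Lambda$ one reads off $(h_1h_2)^\perp=h_1^\perp h_2^\perp=h_2^\perp h_1^\perp$, so in particular $p_n^\perp$ and $g^\perp$ commute. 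Hence
\[
p_n^\perp f = p_n^\perp g^\perp \tilde{f} = g^\perp\!\left(p_n^\perp \tilde{f}\right).
\]
Now Theorem~\ref{thm:an-fermion-adjoint} evaluates the inner bracket as $p_n^\perp\tilde{f}=\bra{0}e^{H(x)}a_n\ket{v}$. Writing $\ket{v'}:=a_n\ket{v}\in\mathcal{F}$, this equals $\bra{0}e^{H(x)}\ket{v'}$, and a second application of Theorem~\ref{thm:adjoint_action_fermion} (with $\ket{v}$ replaced by $\ket{v'}$) gives $g^\perp\bra{0}e^{H(x)}\ket{v'}=\bra{w}e^{H(x)}\ket{v'}=\bra{w}e^{H(x)}a_n\ket{v}$, which is the claim.

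The only points requiring care are bookkeeping: I must check that $a_n\ket{v}$ is again a genuine element of $\mathcal{F}$, which holds because $a_n$ acts on each basis vector \eqref{eq:elementary-vactors} as a finite linear combination of such vectors, so Theorem~\ref{thm:adjoint_action_fermion} may legitimately be reapplied; the homomorphism property is used only for $h=g$ and $h=p_n$. The main obstacle, such as it is, is the commutativity step, where one must be sure the two adjoint operators commute as operators on all of $\Lambda$ rather than merely on a spanning set. Alternatively one can bypass the reduction and argue directly: since $\bra{w}$ and $\ket{v}$ are independent of $x$ and $H(x)=\sum_{m\geq 1}\frac{p_m(x)}{m}a_m$ has all its summands mutually commuting, one has $p_n^\perp=n\,\partial/\partial p_n$ and $\partial e^{H(x)}/\partial p_n=\frac{a_n}{n}e^{H(x)}$, whence $p_n^\perp f=\bra{w}a_n e^{H(x)}\ket{v}=\bra{w}e^{H(x)}a_n\ket{v}$ using $[a_n,e^{H(x)}]=0$; this route trades the commutativity lemma for a verification that power-sum differentiation is valid on the completed ring of symmetric functions.
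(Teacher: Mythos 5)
Your proof is correct and takes essentially the same approach as the paper, which states this corollary without a separate proof as the immediate combination of Theorems \ref{thm:an-fermion-adjoint} and \ref{thm:adjoint_action_fermion}. Writing $f=g^\perp\tilde{f}$, commuting $p_n^\perp$ with $g^\perp$, applying the vacuum case to $\tilde{f}$, and then reapplying Theorem \ref{thm:adjoint_action_fermion} to $a_n\ket{v}\in\mathcal{F}$ is precisely that intended deduction, with the bookkeeping (commutativity of adjoint operators, stability of $\mathcal{F}$ under $a_n$) made explicit.
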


\subsection{Operators $e^\Theta$ and $e^\theta$}\label{sec:etheta_eTheta}

The following two operators~\cite{iwao2020freefermion} are essential for our calculations:
\[
\Theta=\beta a_{-1}-\frac{\beta^2}{2}a_{-2}+\frac{\beta^3}{3}a_{-3}-\cdots,\quad
\theta=\Theta^\ast=\beta a_{1}-\frac{\beta^2}{2}a_{2}+\frac{\beta^3}{3}a_{3}-\cdots.
\]

Let $\psi(z)=\sum_{n\in \ZZ}\psi_nz^n$ and $\psi^\ast(z)=\sum_{n\in \ZZ}\psi^\ast_nz^n$ be the generating functions of $\psi_n$ and $\psi^\ast_n$ with a formal variable $z$.
They satisfy $[a_n,\psi(z)]=z^n\psi(z)$ and $[a_n,\psi^\ast(z)]=-z^{-n}\psi^\ast(z)$.

Since
\[
e^X Ye^{-X}=
Y+\mathrm{ad}_X\cdot Y+\frac{(\mathrm{ad}_X)^2}{2!}\cdot Y+\frac{(\mathrm{ad}_X)^3}{3!}\cdot Y+\cdots
\]
($\mathrm{ad}_X\cdot Y=[X,Y]$), we have
\begin{equation}\label{eq:e_theta_action}
e^{\Theta}\psi(z)e^{-\Theta}=(1+\beta z^{-1})\psi(z),\qquad
e^{\theta}\psi(z)e^{-\theta}=(1+\beta z)\psi(z).
\end{equation}

In the rest of this section, we list several lemmas which will be used later.

\begin{lemma}\label{lemma:basic_1}
We have the following equations.
\begin{enumerate}
\gdef\theenumi{\roman{enumi}}
\item \label{item:basic_1_1}
$e^\Theta \psi_ne^{-\Theta}=
\psi_n+\beta \psi_{n+1}
$.
\item \label{item:basic_1_2}
$e^\theta \psi_ne^{-\theta}=
\psi_n+\beta \psi_{n-1}
$.
\item \label{item:basic_1_3}
$
e^{H(x)}e^{\Theta}=
\prod_{l=1}^\infty
(1+\beta x_l)
\cdot
e^{\Theta}e^{H(x)}
$.
\item \label{item:basic_1_4}
$
e^{H(x)}\psi(z)=
\left(
\sum_{i=0}^{\infty}h_i(x)z^i
\right)
\psi(z)e^{H(x)}
$.
\end{enumerate}
\end{lemma}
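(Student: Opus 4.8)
The plan is to dispatch the four identities by two elementary mechanisms. Parts \eqref{item:basic_1_1} and \eqref{item:basic_1_2} are coefficient extractions from the generating-function identities \eqref{eq:e_theta_action} already in hand, whereas \eqref{item:basic_1_3} and \eqref{item:basic_1_4} are instances of the Hadamard expansion $e^XYe^{-X}=\sum_{k\geq 0}\frac{(\mathrm{ad}_X)^k}{k!}\cdot Y$ combined with the commutation relations \eqref{eq:relation_added}.

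First I would expand both sides of $e^{\Theta}\psi(z)e^{-\Theta}=(1+\beta z^{-1})\psi(z)$ as Laurent series in $z$. The left-hand side equals $\sum_n(e^{\Theta}\psi_ne^{-\Theta})z^n$, and the right-hand side equals $\sum_n\psi_nz^n+\beta\sum_n\psi_nz^{n-1}=\sum_n(\psi_n+\beta\psi_{n+1})z^n$; comparing the coefficient of $z^n$ yields \eqref{item:basic_1_1}. Part \eqref{item:basic_1_2} is identical, starting from $e^{\theta}\psi(z)e^{-\theta}=(1+\beta z)\psi(z)$, where the shift instead produces $\psi_{n-1}$.

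For \eqref{item:basic_1_3}, the key computation is the commutator $[H(x),\Theta]$. Using \eqref{eq:relation_added} together with $[a_n,a_{-m}]=n\delta_{n,m}$, only the diagonal terms survive and I obtain the \emph{scalar} $[H(x),\Theta]=\sum_{n\geq 1}\frac{(-1)^{n-1}}{n}\beta^np_n(x)=\sum_{l\geq 1}\log(1+\beta x_l)$, the last equality being the Taylor expansion of $\log(1+t)$ at $t=\beta x_l$. Because this commutator is central, every higher commutator vanishes, so the Hadamard series truncates to $e^{H(x)}\Theta e^{-H(x)}=\Theta+[H(x),\Theta]$. Exponentiating and using that the scalar $[H(x),\Theta]$ commutes with $\Theta$ gives $e^{H(x)}e^{\Theta}e^{-H(x)}=e^{[H(x),\Theta]}e^{\Theta}=\prod_{l\geq 1}(1+\beta x_l)\cdot e^{\Theta}$, which rearranges into \eqref{item:basic_1_3}.

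Part \eqref{item:basic_1_4} follows by the same scheme, now with $[H(x),\psi(z)]=\bigl(\sum_{n\geq 1}\frac{p_n(x)}{n}z^n\bigr)\psi(z)$ coming from $[a_n,\psi(z)]=z^n\psi(z)$. Since the prefactor is a scalar, the truncation argument again applies and yields $e^{H(x)}\psi(z)e^{-H(x)}=\exp\!\bigl(\sum_{n\geq 1}\frac{p_n(x)}{n}z^n\bigr)\psi(z)$, and the exponential is precisely the generating function $\prod_{l}(1-x_lz)^{-1}=\sum_{i\geq 0}h_i(x)z^i$; alternatively one reads \eqref{item:basic_1_4} off from \eqref{eq:exp_H_psi} by multiplying by $z^n$ and summing over $n$. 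The only step demanding genuine care—and the main, if mild, obstacle—is verifying that the relevant commutators are truly central, so that the exponential disentangling is legitimate, while simultaneously tracking the index shifts and recognizing each power-sum series as the logarithm of the appropriate symmetric-function generating series.
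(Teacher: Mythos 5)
Your proposal is correct and takes essentially the same route as the paper, whose entire proof is the one-line statement that all four identities follow from \eqref{eq:exp_H_psi}--\eqref{eq:e_theta_action} by straightforward calculation: your coefficient extraction for (i)--(ii), the scalar-commutator argument for (iii) using $[a_m,a_n]=m\delta_{m+n,0}$, and the generating-function computation (equivalently, summing \eqref{eq:exp_H_psi} against $z^n$) for (iv) are precisely those calculations. The only slight imprecision is calling the step in (iv) a ``truncation'': since $[H(x),\psi(z)]=c(z)\psi(z)$ with $c(z)=\sum_{n\geq 1}\frac{p_n(x)}{n}z^n$ central, the adjoint series does not terminate but satisfies $\mathrm{ad}_{H(x)}^k\psi(z)=c(z)^k\psi(z)$ and sums to $e^{c(z)}\psi(z)$, which is in fact what you write down.
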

\begin{proof}
Every equation is shown from  (\ref{eq:exp_H_psi}--\ref{eq:e_theta_action}) by straightforward calculations.
\end{proof}

For a sequence of integers $n=(n_1,\dots,n_r)$, set
\begin{align}
&\ket{n}^G=
\ket{n_1,\dots,n_r}^G
:=\psi_{n_1-1}e^{\Theta}
\psi_{n_2-2}e^{\Theta}
\cdots
\psi_{n_r-r}e^{\Theta}\ket{-r},\label{eq:vecG}
\\
&\ket{n}^g=
\ket{n_1,\dots,n_r}^g
:=
\psi_{n_1-1}e^{-\theta}
\psi_{n_2-2}e^{-\theta}
\cdots
\psi_{n_r-r}e^{-\theta}
\ket{-r}.\label{eq:vecg}
\end{align}
\begin{rem}\label{rem:n^g_tips}
Since $e^{-\theta}\ket{-r}=\ket{-r}$, \eqref{eq:vecg} can be rewritten as
\[
\ket{n}^g=
\psi_{n_1-1}e^{-\theta}
\psi_{n_2-2}e^{-\theta}
\cdots
\psi_{n_r-r}
\ket{-r}.
\]
\end{rem}
\begin{lemma}\label{lemma:basic_2}
The vectors $(\ref{eq:vecG}$--$\ref{eq:vecg})$ satisfy the following equations.
\begin{enumerate}
\item \label{item:basic_2_1}
$\ket{n_1,\dots,n_{r-1},-1}^G=(-\beta)\ket{n_1,\dots,n_{r-1},0}^G$,
\item \label{item:basic_2_1a}
$\ket{\dots,n-1,n,\dots}^G=(-\beta)\ket{\dots,n,n,\dots}^G$,
\item \label{item:basic_2_2}
$n_r<0\Rightarrow\ket{n_1,\dots,n_{r}}^g=0$,
\item \label{item:basic_2_3}
$\ket{\dots,n-1,n,\dots}^g=(-\beta)\ket{\dots,n-1,n-1,\dots}^g$.
\end{enumerate}
\end{lemma}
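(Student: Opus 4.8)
The plan is to prove all four identities by pushing the exponential factors $e^{\pm\Theta}$ and $e^{-\theta}$ past the neighbouring fermions, using the commutation rules of Lemma \ref{lemma:basic_1} and \eqref{eq:e_theta_action} together with two elementary facts: a fermion squares to zero, $\psi_n^2=0$ (immediate from \eqref{eq:free-fermions-relation}), and $\psi_m\ket{-r}=0$ whenever $m<-r$ (immediate from $\ket{-r}=\psi^\ast_{-r}\cdots\psi^\ast_{-1}\ket{0}$ and $\psi_m\ket{0}=0$ for $m<0$). I would also recall from Remark \ref{rem:n^g_tips} that $e^{-\theta}\ket{-r}=\ket{-r}$. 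The four claims split naturally into two \emph{interior} relations (ii), (iv), comparing two neighbouring entries, and two \emph{boundary} relations (i), (iii), concerning the last entry and the vacuum $\ket{-r}$.

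For the interior relations I would first record the key coincidence: if two consecutive arguments are $n_i=n-1$ and $n_{i+1}=n$, then \emph{both} corresponding fermion indices equal $n_i-i=n_{i+1}-(i+1)=n-i-1$. Thus in $\ket{\dots,n-1,n,\dots}^G$ the relevant block is $\psi_{n-i-1}\,e^{\Theta}\,\psi_{n-i-1}\,e^{\Theta}$. Pushing the middle $e^{\Theta}$ to the right by Lemma \ref{lemma:basic_1}\eqref{item:basic_1_1}, the leading term dies because $\psi_{n-i-1}^2=0$, leaving $\beta\,\psi_{n-i-1}\psi_{n-i}\,e^{2\Theta}$. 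The same computation applied to $\ket{\dots,n,n,\dots}^G$ gives $\psi_{n-i}\psi_{n-i-1}e^{2\Theta}=-\psi_{n-i-1}\psi_{n-i}e^{2\Theta}$; since the words to the left and to the right of this block agree in the two vectors, comparing yields (ii). Claim (iv) uses the same mechanism with $e^{-\theta}$ in place of $e^{\Theta}$: here pushing $e^{-\theta}$ rightward via \eqref{eq:e_theta_action} produces the expansion $e^{-\theta}\psi_m=\sum_{j\ge 0}(-\beta)^j\psi_{m-j}e^{-\theta}$ (which truncates on any vector of $\mathcal{F}$), the $j=0$ term again vanishes by $\psi^2=0$, and the shift $j\mapsto j+1$ identifies the expansion for $\ket{\dots,n-1,n,\dots}^g$ with $(-\beta)$ times that for $\ket{\dots,n-1,n-1,\dots}^g$ term by term.

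For the boundary relations I would treat (iii) first, as it is immediate: by $e^{-\theta}\ket{-r}=\ket{-r}$ the last factor of $\ket{n_1,\dots,n_r}^g$ is $\psi_{n_r-r}\ket{-r}$, and $n_r<0$ forces $n_r-r<-r$, so this vanishes and the whole vector is $0$. For (i) I would isolate the identity $\psi_{-1-r}\,e^{\Theta}\ket{-r}=(-\beta)\,\psi_{-r}\,e^{\Theta}\ket{-r}$, from which (i) follows after multiplying on the left by the common word $\psi_{n_1-1}e^{\Theta}\cdots\psi_{n_{r-1}-(r-1)}e^{\Theta}$. To prove this identity I would conjugate $e^{\Theta}$ through the fermion, writing $\psi_n e^{\Theta}\ket{-r}=e^{\Theta}\bigl(\sum_{j\ge 0}(-\beta)^j\psi_{n+j}\bigr)\ket{-r}$, and take $n=-1-r$: the $j=0$ term is $\psi_{-1-r}\ket{-r}=0$ because $-1-r<-r$, so the sum starts at $j=1$ and the reindexing $j\mapsto j+1$ exhibits exactly $(-\beta)$ times the $n=-r$ sum.

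The individual steps are short once this mechanism is in hand; the only genuine care is bookkeeping. The main obstacle I anticipate is justifying the manipulations with the infinite expansions arising from $e^{-\theta}$ (in (iv)) and from $e^{-\Theta}\psi_n e^{\Theta}$ (in (i)): I must verify that these series define honest operators on $\mathcal{F}$—which holds because $\psi_m$ annihilates any fixed vector once $m$ is sufficiently negative, so only finitely many terms contribute—and that the termwise reindexing is therefore legitimate. The conceptual heart of all four statements is a single fact, the vanishing of the leading term (either $\psi^2=0$ or $\psi_m\ket{-r}=0$), which is precisely what licenses the index shift that manufactures the factor $-\beta$.
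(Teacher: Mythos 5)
Your proof is correct and follows essentially the same route as the paper, whose entire proof is the citation of Lemma \ref{lemma:basic_1}: the index-shift mechanism you describe (commute $e^{\Theta}$ or $e^{-\theta}$ past a neighbouring fermion, then kill the leading term via $\psi_n^2=0$ or $\psi_m\ket{-r}=0$ for $m<-r$) is exactly what that citation encodes. One small correction to your last paragraph: in case (i) the series $\sum_{j\ge 0}(-\beta)^j\psi_{n+j}\ket{-r}$ has \emph{ascending} indices, so it does not truncate and your stated justification does not apply there (it converges only degreewise, in the completion where $e^{\Theta}\ket{-r}$ already lives); this is harmless, but it can be avoided entirely by using Lemma \ref{lemma:basic_1} (i) in its finite form, $0=e^{\Theta}\psi_{-1-r}\ket{-r}=(\psi_{-1-r}+\beta\psi_{-r})e^{\Theta}\ket{-r}$, which yields (i) without any infinite expansion.
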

\begin{proof}
Equations (\ref{item:basic_2_1}--\ref{item:basic_2_1a}) follow from Lemma \ref{lemma:basic_1} \eqref{item:basic_1_1}, and Equations (\ref{item:basic_2_2}--\ref{item:basic_2_3}) follow from Lemma \ref{lemma:basic_1} \eqref{item:basic_1_2}.
\end{proof}

\begin{defi}[$\overline{n}$ and $\underline{n}$]
Let $n=(n_1,\dots,n_r)$ be a sequence of integers.
Define $\overline{n}=(\overline{n}_1,\dots,\overline{n}_r)$ and $\underline{n}=(\underline{n}_1,\dots,\underline{n}_r)$ as
\[
\overline{n}_i=\min[n_i,n_{i+1},\dots,n_r],\quad
\underline{n}_i=\min[n_1,n_{2},\dots,n_i].
\]
If every element of $n$ is positive, $\overline{n}$ corresponds to the smallest Young diagram that contains $n$, and $\underline{n}$ corresponds to the largest Young diagram that is contained in $n$.
\end{defi}

\begin{cor}\label{cor:X(n)}\label{cor:x(n)}
Let $n=(n_1,\dots,n_r)$ be a sequence of integers with 
$n_i-n_{i+1}\geq -1$ for all $i$.
Then we have
\[
\ket{n}^G=(-\beta)^{\zet{\overline{n}}-\zet{n}}\ket{\overline{n}}^G,\qquad \ket{n}^g=(-\beta)^{\zet{n}-\zet{\underline{n}}}\ket{\underline{n}}^g,
\]
where $\zet{n}=\sum_{i=1}^r n_i$.
\end{cor}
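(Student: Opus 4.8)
The plan is to prove both identities by the same mechanism: repeatedly apply Lemma~\ref{lemma:basic_2} to straighten the sequence $n$ until it becomes the relevant partition, each application contributing one factor of $(-\beta)$. The hypothesis $n_i-n_{i+1}\geq -1$ is used precisely here. Call a position $i$ an \emph{ascent} of $n$ if $n_i<n_{i+1}$; the hypothesis forces $n_{i+1}=n_i+1$ at every ascent, so the local pattern is exactly $(n_i,n_{i+1})=(m-1,m)$, which is the shape handled by Lemma~\ref{lemma:basic_2}\,(\ref{item:basic_2_1a}) in the $G$-case and by Lemma~\ref{lemma:basic_2}\,(\ref{item:basic_2_3}) in the $g$-case. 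Since $\overline{n}$ (the smallest partition containing $n$) and $\underline{n}$ (the largest partition contained in $n$) are weakly decreasing, the straightening terminates exactly when $n$ has no ascent, i.e.\ when $n$ equals $\overline{n}$ (resp.\ $\underline{n}$).

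For $\ket{n}^g$ I would induct on $d:=\zet{n}-\zet{\underline{n}}\geq 0$. If $d=0$ then $n=\underline{n}$ and there is nothing to prove. If $d>0$ then $n$ is not weakly decreasing, so it has an ascent; let $i$ be the \emph{rightmost} one. By the hypothesis $(n_i,n_{i+1})=(m-1,m)$, so Lemma~\ref{lemma:basic_2}\,(\ref{item:basic_2_3}) gives $\ket{n}^g=(-\beta)\,\ket{n'}^g$, where $n'$ is obtained by replacing $n_{i+1}=n_i+1$ with $n_i$. I then check three things: (a) $n'$ again satisfies $n'_j-n'_{j+1}\geq -1$ for all $j$ — the only new constraint is at $(i+1,i+2)$, and since $i$ was the rightmost ascent we have $n_{i+1}\geq n_{i+2}$, hence $n_i=n_{i+1}-1\geq n_{i+2}-1$; (b) $\underline{n'}=\underline{n}$, because lowering the entry $n_{i+1}>\underline{n}_i$ down to $n_i\geq\underline{n}_i$ changes no prefix minimum; and (c) $\zet{n'}=\zet{n}-1$, so $d$ drops by one. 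The inductive hypothesis then yields $\ket{n}^g=(-\beta)\cdot(-\beta)^{\,\zet{n'}-\zet{\underline{n}}}\ket{\underline{n}}^g=(-\beta)^{\,\zet{n}-\zet{\underline{n}}}\ket{\underline{n}}^g$.

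The $G$-case is the mirror image. Here I would induct on $\zet{\overline{n}}-\zet{n}\geq 0$ and at each step apply Lemma~\ref{lemma:basic_2}\,(\ref{item:basic_2_1a}) at the \emph{leftmost} ascent $i$, which replaces $n_i=n_{i+1}-1$ by $n_{i+1}$ and produces one factor $(-\beta)$. The symmetric checks go through: choosing the leftmost ascent guarantees $n_{i-1}\geq n_i$, so raising $n_i$ keeps $n_{i-1}-n_i\geq -1$; and raising $n_i$ up to $n_{i+1}\leq\overline{n}_{i+1}$ leaves every suffix maximum — hence $\overline{n}$ — unchanged. Two boundary relations complete the picture at the ends of the sequence: Lemma~\ref{lemma:basic_2}\,(\ref{item:basic_2_1}) plays the role of (\ref{item:basic_2_1a}) against a virtual part $0$ at the right end of a $G$-vector (so a final part $-1$ is straightened to $0$), while Lemma~\ref{lemma:basic_2}\,(\ref{item:basic_2_2}) shows that a $g$-vector whose straightened form would acquire a negative last part vanishes, matching the convention on $\underline{n}$.

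The only real work is the invariance bookkeeping in step (b) together with the hypothesis-preservation in step (a); the subtlety is that a single application of Lemma~\ref{lemma:basic_2} can create a \emph{new} ascent one position over — the straightening ``cascades'' toward the boundary — so one must commit to always reducing the rightmost ascent for $\ket{n}^g$ and the leftmost ascent for $\ket{n}^G$. Once the correct ascent is fixed, preservation of the hypothesis and of $\underline{n}$ (resp.\ $\overline{n}$) is immediate, and the exponent is just the number of boxes removed or added, namely $\zet{n}-\zet{\underline{n}}$ (resp.\ $\zet{\overline{n}}-\zet{n}$).
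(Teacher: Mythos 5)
Your proof is correct and takes essentially the same route as the paper: the paper's entire proof is the one-line remark that the corollary ``follows directly from Lemma~\ref{lemma:basic_2},'' and your rightmost-ascent (for $\ket{n}^g$) and leftmost-ascent (for $\ket{n}^G$) induction is exactly that straightening argument carried out in detail, including the genuine subtlety about consecutive ascents that the paper leaves implicit. One remark: the paper's displayed definition $\overline{n}_i=\min[n_i,\dots,n_r]$ is evidently a typo for $\max$ (as its own gloss ``smallest Young diagram containing $n$'' indicates), and your reading via suffix maxima is the correct one.
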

\begin{proof}
This corollary follows directly from Lemma \ref{lemma:basic_2} (\ref{item:basic_2_1}--\ref{item:basic_2_3}).
\end{proof}

A \textit{rook strip} is a skew Young diagram that has at most one box in any row or column.
\begin{lemma}\label{lemma:rook}
Let $\lambda=(\lambda_1,\lambda_2,\dots,\lambda_r)$ be a partition.
Then we have
\begin{equation}
e^{\theta}\ket{\lambda}^g
=\sum_{\lambda/\mu\,\mathrm{rook\, strip}}
\beta^{\zet{\lambda/\mu}}\cdot 
\ket{\mu}^g.
\end{equation} 
This equation is equivalent to
\begin{equation}
{}^g\bra{\lambda}e^{\Theta}
=\sum_{\lambda/\mu\,\mathrm{rook\, strip}}
\beta^{\zet{\lambda/\mu}}\cdot 
{}^g\bra{\mu}.
\end{equation} 
\end{lemma}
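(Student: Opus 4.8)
The plan is to establish the first identity $e^{\theta}\ket{\lambda}^g=\sum_{\lambda/\mu\,\mathrm{rook\,strip}}\beta^{\zet{\lambda/\mu}}\ket{\mu}^g$ by a direct computation, and then to obtain the second identity for free by applying the isomorphism $\omega$. Throughout I write $a_i=\lambda_i-i$, so that $\ket{\lambda}^g=\psi_{a_1}e^{-\theta}\psi_{a_2}e^{-\theta}\cdots\psi_{a_r}e^{-\theta}\ket{-r}$ by \eqref{eq:vecg}. I will use only three facts: the commutation $e^{\theta}\psi_n=(\psi_n+\beta\psi_{n-1})e^{\theta}$, which is Lemma \ref{lemma:basic_1}\eqref{item:basic_1_2} rewritten; the trivial relation $e^{\theta}e^{-\theta}=1$; and the fact $e^{\theta}\ket{-r}=\ket{-r}$, which holds because $\theta\ket{-r}=0$ (this is also the reason behind Remark \ref{rem:n^g_tips}).

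First I would sweep the leading $e^{\theta}$ rightward through all the fermions. Abbreviating $B_n:=\psi_n+\beta\psi_{n-1}=e^{\theta}\psi_ne^{-\theta}$, a one-line induction on $k$ (whose step uses $e^{\theta}\psi_{a_k}e^{-\theta}=B_{a_k}$ together with $B_{a_k}=B_{a_k}e^{-\theta}e^{\theta}$) gives the operator identity
\[
e^{\theta}\psi_{a_1}e^{-\theta}\cdots\psi_{a_k}e^{-\theta}=B_{a_1}e^{-\theta}\cdots B_{a_k}e^{-\theta}\,e^{\theta}.
\]
Taking $k=r$, applying both sides to $\ket{-r}$ and using $e^{\theta}\ket{-r}=\ket{-r}$, I obtain the product form $e^{\theta}\ket{\lambda}^g=B_{a_1}e^{-\theta}B_{a_2}e^{-\theta}\cdots B_{a_r}e^{-\theta}\ket{-r}$. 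Expanding each $B_{a_i}=\psi_{a_i}+\beta\psi_{a_i-1}$ and recording the power of $\beta$ then rewrites this as
\[
e^{\theta}\ket{\lambda}^g=\sum_{\varepsilon\in\{0,1\}^r}\beta^{\zet{\varepsilon}}\ket{\lambda-\varepsilon}^g,\qquad \lambda-\varepsilon=(\lambda_1-\varepsilon_1,\dots,\lambda_r-\varepsilon_r),
\]
where $\ket{\lambda-\varepsilon}^g$ is the vector attached to the integer sequence $\lambda-\varepsilon$ by \eqref{eq:vecg} and $\zet{\varepsilon}=\varepsilon_1+\cdots+\varepsilon_r$.

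It remains to convert this sum over $\{0,1\}^r$ into a sum over rook strips. For every $\varepsilon$ the sequence $n=\lambda-\varepsilon$ satisfies $n_i-n_{i+1}=(\lambda_i-\lambda_{i+1})+(\varepsilon_{i+1}-\varepsilon_i)\geq-1$, so Corollary \ref{cor:x(n)} applies and yields $\ket{\lambda-\varepsilon}^g=(-\beta)^{\zet{\lambda/\mu}-\zet{\varepsilon}}\ket{\mu}^g$ with $\mu:=\underline{\lambda-\varepsilon}$ a genuine partition (writing $\zet{\lambda/\mu}=\zet{\lambda}-\zet{\mu}$, and with terms whose last part turns negative dropping out by Lemma \ref{lemma:basic_2}\eqref{item:basic_2_2}, which is exactly what forbids removing a box from an empty row). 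Collecting all $\varepsilon$ that normalize to a fixed $\mu$, the coefficient of $\ket{\mu}^g$ becomes $\beta^{\zet{\lambda/\mu}}\sum_{\varepsilon}(-1)^{\zet{\lambda/\mu}-\zet{\varepsilon}}$, so the whole statement reduces to the purely combinatorial claim that $\sum_{\varepsilon:\,\underline{\lambda-\varepsilon}=\mu}(-1)^{\zet{\lambda/\mu}-\zet{\varepsilon}}$ equals $1$ when $\lambda/\mu$ is a rook strip and $0$ otherwise.

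This last step is where I expect the real work to lie. The key observation that makes it tractable is that both the fibre condition $\underline{\lambda-\varepsilon}=\mu$ and the signed sum factor over the maximal blocks of equal parts of $\lambda$: inside a block of value $m$ occupying rows $p,\dots,q$, the running minimum forces $\mu_i=m-1$ precisely for the indices $i$ at or beyond the first position of the block where $\varepsilon_i=1$, so the local alternating sum is $+1$ when $\mu\equiv m$ on the block, $-1$ when $\mu=m-1$ in the bottom row $q$ only, and $0$ as soon as $\mu=m-1$ in two or more rows (the remaining free coordinates each contribute a factor $1+(-1)=0$). The nonzero configurations are thus exactly those removing at most one box per block and always from its bottom row, i.e.\ those for which $\lambda/\mu$ has no two boxes in a common column; these are precisely the rook strips, and the product of the local signs is $(-1)^{\zet{\lambda/\mu}}$, which cancels the prefactor $(-1)^{\zet{\lambda/\mu}}$ and leaves the coefficient $\beta^{\zet{\lambda/\mu}}$. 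Finally, applying $\omega$ to the established identity—using $\theta^{\ast}=\Theta$ and $\omega(\ket{\nu}^g)={}^g\bra{\nu}$—turns it into ${}^g\bra{\lambda}e^{\Theta}=\sum_{\lambda/\mu\,\mathrm{rook\,strip}}\beta^{\zet{\lambda/\mu}}\,{}^g\bra{\mu}$, which is the second equation.
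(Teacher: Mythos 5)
Your proof is correct, and its first half is exactly the paper's: the paper also commutes $e^{\theta}$ through the fermions using Lemma \ref{lemma:basic_1} (\ref{item:basic_1_2}) and $e^{\theta}\ket{-r}=\ket{-r}$, arriving at the same expansion
\[
e^{\theta}\ket{\lambda}^g=\sum_{\epsilon\in\{0,1\}^r}\beta^{\zet{\epsilon}}\,\ket{\lambda-\epsilon}^g .
\]
The divergence is in how this sum over $\{0,1\}^r$ is converted into a sum over rook strips. The paper stays at the level of the unnormalized vectors: whenever $\lambda_p=\lambda_{p+1}$ and a box is removed from row $p$, it pairs the terms with $\epsilon_{p+1}=0$ and $\epsilon_{p+1}=1$ and cancels them directly via $\ket{\dots,n-1,n,\dots}^g=(-\beta)\ket{\dots,n-1,n-1,\dots}^g$ (Lemma \ref{lemma:basic_2} (\ref{item:basic_2_3})); the surviving terms are precisely those for which $\lambda-\epsilon$ is a partition $\mu$ with $\lambda/\mu$ a rook strip, already carrying the coefficient $\beta^{\zet{\lambda/\mu}}$. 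You instead normalize every term at once through Corollary \ref{cor:X(n)} and reduce the lemma to a purely numerical claim, namely that the signed sum over each fibre $\{\epsilon:\underline{\lambda-\epsilon}=\mu\}$ equals $1$ for rook strips and $0$ otherwise, which you prove by factoring both the fibre condition and the sign over the maximal blocks of equal parts of $\lambda$. The two mechanisms are the same cancellation in different packaging: your observation that each free coordinate contributes $1+(-1)=0$ is exactly the paper's pairing. What your version buys is explicitness --- the involution the paper leaves implicit becomes a transparent product formula, and the boundary case of removing a box from an empty row is handled cleanly via Lemma \ref{lemma:basic_2} (\ref{item:basic_2_2}) --- at the cost of length and of routing the argument through Corollary \ref{cor:X(n)} where the paper quotes only Lemma \ref{lemma:basic_2}. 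Your derivation of the second identity by applying $\omega$ with $\theta^{\ast}=\Theta$ and $\omega(\ket{\nu}^g)={}^g\bra{\nu}$ is exactly what the paper means by ``equivalent.''
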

\begin{proof}
From Lemma \ref{lemma:basic_1} (\ref{item:basic_1_2}) and $e^\theta \ket{-r}=\ket{-r}$, we have
\begin{align}
&e^{\theta}\ket{\lambda}^g\nonumber \\
&=
(\psi_{\lambda_1-1}+\beta \psi_{\lambda_1-2})e^\theta 
(\psi_{\lambda_2-2}+\beta \psi_{\lambda_2-3})e^\theta 
\dots
(\psi_{\lambda_r-r}+\beta \psi_{\lambda_r-r-1})e^\theta 
\ket{-r}^g\nonumber\\
&=\sum_{\epsilon_i\in \{0,1\}}\beta^{\sharp\{i\,\vert\,\epsilon_i=1\} }\cdot 
\ket{\lambda_1-\epsilon_1,\dots,\lambda_r-\epsilon_r}^g.
\label{eq:e^theta_ket_lambda}
\end{align}
If the sequence $\lambda_1\geq \lambda_2\geq \dots \geq \lambda_r$ contains some $p$ such that $\lambda_p=\lambda_{p+1}$, the contributions of
\[
\ket{\lambda_1-\epsilon_1,\dots,\lambda_p-1,\lambda_{p+1},\dots,\lambda_r-\epsilon_r }^g
\]
and 
\[
\ket{\lambda_1-\epsilon_1,\dots,\lambda_p-1,\lambda_{p+1}-1,\dots,\lambda_r-\epsilon_r }^g
\] 
in \eqref{eq:e^theta_ket_lambda} are canceled out because of Lemma \ref{lemma:basic_2} (\ref{item:basic_2_3}).
Therefore, the only terms $\ket{\mu_1,\dots,\mu_r}^g$ that survive must satisfy
\[
\lambda_i-\mu_i\in \{0,1\}\quad 
\mbox{and}\quad 
\alpha=\lambda_p=\lambda_{p+1}
\ \Rightarrow\ 
\alpha=\mu_p.
\]
This is equivalent to saying that $\lambda/\mu$ is a rook strip.
\end{proof}

\section{Stable Grothendieck polynomial and its dual}\label{sec:Gr}

\subsection{The completed ring $\widehat{\Lambda}$}\label{sec:completion}

Hereafter, we always assume $k=\CC(\beta)$.
Let $I_n$ ($n\geq 0$) be the $k$-subspace of $\Lambda$ that is defined as 
\[
I_n=\sum_{\ell(\lambda)>n}k\cdot s_{\lambda} (x),
\]
where $\ell(\lambda)$ is the length of a partition $\lambda$.
As $I_n\supset I_{n+1}$, there exists an inverse system
$
\Lambda/I_0\leftarrow \Lambda/I_1\leftarrow\Lambda/I_2 \leftarrow \cdots
$
of $k$-spaces.
Let $\widehat{\Lambda}:=\lim\limits_{\longleftarrow} (\Lambda/I_n)$ be the inverse limit.
There exists a natural inclusion $\Lambda\hookrightarrow \widehat{\Lambda}$.

We define the $k$-linear topology on $\Lambda$ where the family $\{I_n\}_{n\geq 0}$ forms an open neighborhood base at $0$.
The inclusion $\Lambda \hookrightarrow\widehat{\Lambda}$ can be viewed as a completion of the topological space $\Lambda$.
Note that
\begin{align}
f(x)\in I_{n} 
&\iff f(x_1,\dots,x_{n},0,0,\dots)=0\label{eq:M_n(1)}\\
&\iff \langle f(x),s_\lambda(x)\rangle =0\mbox{ for all } \ell(\lambda)\leq n.\label{eq:M_n(2)}
\end{align}

\begin{example}
$e_0+e_1+e_2+\cdots$ is contained in $\widehat{\Lambda}$, while $h_0+h_1+h_2+\cdots$ is not.
\end{example}

Moreover, $\widehat{\Lambda}$ is indeed a topological $k$-algebra, which has a continuous  multiplication as well as a continuous addition.

For a partition $\lambda$, we have
$
\langle I_n,s_\lambda\rangle =0
$
for any $\ell(\lambda)\leq n$.
This implies that the map $\Lambda\to k; f\mapsto \langle f,s_\lambda\rangle$ is continuous. Therefore, the bilinear form is uniquely extend to the continuous map:
\[
\langle \cdot,\cdot \rangle:\widehat{\Lambda}\times \Lambda \to k.
\]

Let $\Lambda_n:=k[x_1,\dots,x_n]^{\mathfrak{S}_n}$ be the $k$-algebra of symmetric polynomials in $n$ variables.
We often identify $\Lambda_n$ with $\Lambda/I_n$ by sending $s_\lambda(x_1,\dots,x_n)$ to $(s_\lambda(x)$ $\mathrm{mod}\,I_n)$ for $\ell(\lambda)\leq n$.
Let $\pi_n:\widehat{\Lambda}\to \Lambda/I_n\simeq \Lambda_n$ denote the natural projection, which coincides with the substitution map:
\[
\pi_n(f(x))=f(x_1,x_2,\dots,x_n,0,0,\dots).
\]

It is convenient to define the inclusion $\iota_n:\Lambda_n\hookrightarrow \widehat{\Lambda}$ that sends $s_\lambda(x_1,\dots,x_n)$ to $s_\lambda(x)$ for $\ell(\lambda)\leq n$.
Obviously, we have $\pi_n\circ \iota_n=\mathrm{id}_{\Lambda_n}$ and $\mathrm{Im}(\iota_n)\subset \Lambda\subsetneq \widehat{\Lambda} $.
For example, we have $\iota_n\circ \pi_n(e_0+e_1+e_2+\cdots)=e_0+e_1+\dots+e_n$.
We call the map 
\[
\iota_n\circ \pi_n:\widehat{\Lambda}\to \widehat{\Lambda}
\]
the \textit{$n$-th truncation map}.
It is also important to note $\mathrm{Im}(1-\iota_n\circ \pi_n)\subset I_n$.

\subsection{Free-fermionic presentation of stable Grothendieck polynomials}

A \textit{stable Grothendieck polynomial}~\cite{fomin1994grothendieck} is a unique element $G_\lambda(x)$ of $\widehat{\Lambda}$ that satisfies
\[
\pi_n(G_\lambda(x))=G_\lambda(x_1,\dots,x_n),\qquad\mbox{for all $n$}.
\]
In \cite{iwao2020freefermion}, the author proved the following presentation:
\begin{prop}[{\cite[Theorem 3.6]{iwao2020freefermion}}]\label{prop:main_Iwao_G}
We have
$
G_\lambda(x)=\bra{0}e^{H(x)}\ket{\lambda}^G
$.
\end{prop}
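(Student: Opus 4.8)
The plan is to use the characterization of $G_\lambda(x)$ as the unique element of $\widehat{\Lambda}$ with $\pi_n(G_\lambda(x))=G_\lambda(x_1,\dots,x_n)$ for all $n$, the right-hand side being the bialternant \eqref{eq:ratio}. Thus it suffices to check two things for $F(x):=\bra0 e^{H(x)}\ket{\lambda}^G$: that $F(x)$ is a well-defined element of $\widehat{\Lambda}$, and that $\pi_n(F(x))=G_\lambda(x_1,\dots,x_n)$ for every $n$. For the first point I would expand $\ket{\lambda}^G$ in the Schur basis $\{\ket{\mu}\}$ of Theorem~\ref{thm:Schur}: repeatedly applying Lemma~\ref{lemma:basic_1}~\eqref{item:basic_1_1} (equivalently Lemma~\ref{lemma:basic_2}) shows that each $e^{\Theta}$ only adds boxes, so $\ket{\lambda}^G=\sum_{\mu\supseteq\lambda}c_{\lambda\mu}\ket{\mu}$ with $c_{\lambda\mu}\in\ZZ[\beta]$ homogeneous of degree $|\mu|-|\lambda|$. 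Hence $F(x)=\sum_\mu c_{\lambda\mu}s_\mu(x)$, and for each fixed power of $\beta$ only finitely many $\mu$ occur; this is exactly the condition that the partial sums converge in the $I_n$-adic topology, so $F(x)\in\widehat{\Lambda}$.

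For the identity $\pi_n(F(x))=G_\lambda(x_1,\dots,x_n)$, I would pass to generating functions. Writing $\psi(z_j)=\sum_n\psi_n z_j^n$, the quantity $\bra0 e^{H(x)}\ket{\lambda}^G$ is the coefficient of $z_1^{\lambda_1-1}\cdots z_r^{\lambda_r-r}$ in $\bra0 e^{H(x)}\psi(z_1)e^{\Theta}\cdots\psi(z_r)e^{\Theta}\ket{-r}$. First I would collect all the $e^{\Theta}$'s by the conjugation formula \eqref{eq:e_theta_action}, obtaining $\psi(z_1)e^{\Theta}\cdots\psi(z_r)e^{\Theta}=\prod_{j=1}^r(1+\beta z_j^{-1})^{j-1}\,\psi(z_1)\cdots\psi(z_r)e^{r\Theta}$. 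Next I would push $e^{H(x)}$ to the right through the $\psi(z_j)$'s and through $e^{r\Theta}$ using Lemma~\ref{lemma:basic_1}~\eqref{item:basic_1_3} and \eqref{item:basic_1_4}; this produces the scalar $\prod_l(1+\beta x_l)^r\prod_j(\sum_i h_i(x)z_j^i)$ and is annihilated on $\ket{-r}$. Pushing $e^{r\Theta}$ leftward onto $\bra0$ (where $\bra0 e^{r\Theta}=\bra0$) contributes $\prod_j(1+\beta z_j^{-1})^{-r}$, and the surviving bare correlator $\bra0\psi(z_1)\cdots\psi(z_r)\ket{-r}$ equals $\det(z_i^{-k})_{1\le i,k\le r}$ by Wick's theorem (Theorem~\ref{thm:Wick}). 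After setting $x_{n+1}=x_{n+2}=\cdots=0$ and extracting the coefficient, multilinearity of the determinant turns $\pi_n(F(x))$ into a single $r\times r$ determinant whose entries are one-variable coefficient extractions of $\prod_{l=1}^n(1-x_lz)^{-1}(1+\beta z^{-1})^{i-1-r}$, with an overall prefactor $\prod_{l=1}^n(1+\beta x_l)^r$.

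The remaining, and \emph{hardest}, step is to identify this determinant with the bialternant \eqref{eq:ratio}. The two look structurally different: the fermionic output is a Jacobi--Trudi-type determinant of size $\ell(\lambda)$ with entries assembled from $h_i(x)$ and binomial $\beta$-corrections, whereas \eqref{eq:ratio} is a size-$n$ bialternant with entries $x_i^{\lambda_j+n-j}(1+\beta x_i)^{j-1}$. I would try to bridge them by writing each entry as a contour integral $\oint\frac{dz}{2\pi i}\,z^{-(\lambda_i-i+k)-1}\prod_{l=1}^n(1-x_lz)^{-1}(1+\beta z^{-1})^{i-1-r}$ and summing the residues at the simple poles $z=x_l^{-1}$, so that the factor $(1+\beta z^{-1})^{i-1-r}$ evaluated there supplies the columns $(1+\beta x_l)^{\bullet}$ of \eqref{eq:ratio} while the prefactor $\prod_l(1+\beta x_l)^r$ absorbs the negative exponents, after which a Vandermonde extraction yields the ratio form. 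I expect the residue bookkeeping and the reconciliation of the two determinant sizes (the classical equivalence of Jacobi--Trudi and bialternant forms) to be the main obstacle; as a sanity check, the case $\lambda=(1)$ collapses to $\frac1\beta(\prod_{l=1}^n(1+\beta x_l)-1)=\sum_{i\ge1}\beta^{i-1}e_i(x_1,\dots,x_n)=G_{(1)}(x_1,\dots,x_n)$, which confirms the mechanism.
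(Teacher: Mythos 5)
Your overall frame is fine and is in fact the one behind the cited proof: the paper does not reprove this statement but quotes \cite[Theorem 3.6]{iwao2020freefermion}, where it is obtained from the finite-variable identity reproduced here as Proposition~\ref{main_Iwao_G_finite} together with an estimate in the spirit of Lemma~\ref{lemma:evaluate1}, showing that the insertion of the extra factor $e^{-r\Theta}$ (which turns $\ket{\lambda}^G$ into the truncated vector) changes $\bra{0}e^{H(x)}(\,\cdot\,)$ only by an element of a high $I_n$; since padding $\lambda$ with zero parts does not change $\ket{\lambda}^G$, the truncation index can be pushed to infinity. Your Wick-theorem computation is correct and leads, as you say, to $\pi_n(F)=\prod_{l=1}^n(1+\beta x_l)^r\det\bigl(\sum_{m\geq 0}\binom{i-1-r}{m}\beta^m h_{\lambda_i-i+k+m}(x_1,\dots,x_n)\bigr)_{1\leq i,k\leq r}$. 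The genuine gap is that the identification of this determinant with the bialternant \eqref{eq:ratio} --- which \emph{is} the content of the theorem, since \eqref{eq:ratio} is how $G_\lambda(x_1,\dots,x_n)$ is defined --- is never carried out: it is only announced as a residue computation you ``would try,'' and the obstacles you yourself list are real ones. The residue at infinity does not vanish when $\lambda_i-i+k\leq 0$; the prefactor $\prod_l(1+\beta x_l)^r$ must cancel exactly against the values $(1+\beta x_l)^{i-1-r}$ picked up at the poles $z=x_l^{-1}$; and passing from an $r\times r$ determinant indexed by rows of $\lambda$ to an $n\times n$ determinant indexed by variables requires a Cauchy--Binet argument, nontrivial when $n\neq r$. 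A cleaner and standard way to get \eqref{eq:ratio} is to represent $\bra{0}e^{H(x_1,\dots,x_n)}$ itself fermionically (as a product of fields $\psi^\ast$ evaluated at the points $x_l^{-1}$, divided by a Vandermonde), so that a single application of Theorem~\ref{thm:Wick} produces the bialternant directly, with no Jacobi--Trudi-to-bialternant conversion at all; some such argument must be supplied before your proof is complete.

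There is a second, smaller but real, flaw in your membership argument $F(x)\in\widehat{\Lambda}$. In the topology generated by the $I_n$, a series $\sum_\mu c_{\lambda\mu}s_\mu$ converges if and only if for every $n$ all but finitely many terms lie in $I_n$, i.e.\ only finitely many $\mu$ with $\ell(\mu)\leq n$ occur. ``Finitely many $\mu$ for each power of $\beta$'' is neither necessary ($e_0+e_1+e_2+\cdots\in\widehat{\Lambda}$ has infinitely many terms of $\beta$-degree $0$) nor sufficient: the series $s_{(1)}+\beta s_{(2)}+\beta^2 s_{(3)}+\cdots$ satisfies your condition, yet no tail of it lies in $I_1$, so it has no limit in $\widehat{\Lambda}$. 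What actually forces convergence here is a row bound, $c_{\lambda\mu}=0$ unless $\mu_i\leq\lambda_i+i-1$ for $i\leq r$ and $\mu_i\leq r$ for $i>r$, which does give finiteness in each fixed length. Establishing that bound needs more than Lemma~\ref{lemma:basic_1}~\eqref{item:basic_1_1}: that is a conjugation formula, whereas the rows of $\mu$ beyond row $r$ are created by the trailing exponential acting on the vacuum, $e^{r\Theta}\ket{-r}\neq\ket{-r}$ --- for instance $\psi_0e^{\Theta}\ket{-1}=\sum_{k\geq 0}\beta^k\ket{(1^{k+1})}$, which is exactly where the expansion $G_{(1)}=\sum_{k\geq 0}\beta^k e_{k+1}$ comes from --- so your step ``each $e^{\Theta}$ only adds boxes'' silently omits the part of the argument that produces, and bounds, those extra rows.
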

\begin{cor}[{\cite[Remark 2.8]{HUDSON2017115}}]\label{cor:generating_fn_G}
We have
\[
\sum_{n\in \ZZ}G_n(x)z^n=\frac{1}{1+\beta z^{-1}}\prod_{l=1}^\infty\frac{1+\beta x_l}{1-x_lz},
\]
where $G_n(x)=\bra{0}e^{H(x)}\ket{(n)}^G$.
The $(n)$ means a sequence of length one.
\end{cor}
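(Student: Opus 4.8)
The plan is to package the scalars $G_n(x)$ into the fermionic generating function $\psi(z)=\sum_{m\in\ZZ}\psi_m z^m$ and reduce the whole statement to a single vacuum expectation value. By \eqref{eq:vecG} with $r=1$ we have $\ket{(n)}^G=\psi_{n-1}e^{\Theta}\ket{-1}$, so that
\[
\sum_{n\in\ZZ}G_n(x)z^n=\bra{0}e^{H(x)}\Bigl(\sum_{n\in\ZZ}z^n\psi_{n-1}\Bigr)e^{\Theta}\ket{-1}=z\,\bra{0}e^{H(x)}\psi(z)e^{\Theta}\ket{-1},
\]
where the reindexing $m=n-1$ turns $\sum_n z^n\psi_{n-1}$ into $z\,\psi(z)$. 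Thus the corollary amounts to computing the matrix element $\bra{0}e^{H(x)}\psi(z)e^{\Theta}\ket{-1}$.

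Next I would transport $e^{H(x)}$ to the right past everything. First, Lemma \ref{lemma:basic_1} \eqref{item:basic_1_4} gives $e^{H(x)}\psi(z)=\bigl(\sum_{i\ge0}h_i(x)z^i\bigr)\psi(z)e^{H(x)}$, and $\sum_{i\ge0}h_i(x)z^i=\prod_{l}(1-x_lz)^{-1}$ is the standard generating series for the complete symmetric functions. Then Lemma \ref{lemma:basic_1} \eqref{item:basic_1_3} gives $e^{H(x)}e^{\Theta}=\prod_l(1+\beta x_l)\,e^{\Theta}e^{H(x)}$. Finally $e^{H(x)}\ket{-1}=\ket{-1}$: since $\ket{-1}=\psi^\ast_{-1}\ket{0}$, the relations \eqref{eq:relation_added} give $a_n\ket{-1}=-\psi^\ast_{n-1}\ket{0}=0$ for every $n\ge1$ (because $\psi^\ast_{n-1}\ket{0}=0$ there), hence $H(x)\ket{-1}=0$. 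Collecting these scalars yields
\[
\bra{0}e^{H(x)}\psi(z)e^{\Theta}\ket{-1}=\prod_{l}\frac{1+\beta x_l}{1-x_lz}\cdot\bra{0}\psi(z)e^{\Theta}\ket{-1}.
\]

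It then remains to evaluate $\bra{0}\psi(z)e^{\Theta}\ket{-1}$. Rewriting \eqref{eq:e_theta_action} as $\psi(z)e^{\Theta}=(1+\beta z^{-1})^{-1}e^{\Theta}\psi(z)$ and using $\bra{0}e^{\Theta}=\bra{0}$ (since $\Theta$ is a combination of the $a_{-n}$ with $n\ge1$, which annihilate the left vacuum) reduces this to $(1+\beta z^{-1})^{-1}\bra{0}\psi(z)\ket{-1}$. The anticommutation relations \eqref{eq:free-fermions-relation} give $\bra{0}\psi_m\psi^\ast_{-1}\ket{0}=\delta_{m,-1}$, whence $\bra{0}\psi(z)\ket{-1}=z^{-1}$. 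Assembling all the factors, the prefactor $z$ cancels the $z^{-1}$ and we obtain exactly
\[
\sum_{n\in\ZZ}G_n(x)z^n=\frac{1}{1+\beta z^{-1}}\prod_{l=1}^\infty\frac{1+\beta x_l}{1-x_lz}.
\]
There is no deep obstacle here; the argument is just a sequence of commutations starting from Proposition \ref{prop:main_Iwao_G}. The only points that need care are to use the generating-function forms of Lemma \ref{lemma:basic_1} (acting on $\psi(z)$ rather than on a single mode $\psi_n$) and to keep straight the direction in which each exponential is transported, so that $e^{H(x)}$ is pushed rightward onto $\ket{-1}$ while $e^{\Theta}$ is peeled off leftward against $\bra{0}$.
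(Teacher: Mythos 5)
Your proof is correct, and it follows exactly the route the paper intends for this corollary (which it leaves as an unstated computation from Proposition \ref{prop:main_Iwao_G}): write $\ket{(n)}^G=\psi_{n-1}e^{\Theta}\ket{-1}$, sum into $z\,\psi(z)$, and push $e^{H(x)}$ and $e^{\Theta}$ through using Lemma \ref{lemma:basic_1} and \eqref{eq:e_theta_action}. All the individual steps ($e^{H(x)}\ket{-1}=\ket{-1}$, $\bra{0}e^{\Theta}=\bra{0}$, $\bra{0}\psi(z)\ket{-1}=z^{-1}$) check out.
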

\begin{cor}\label{cor:Grothen}
Let $\mathcal{G}(z)=\sum_{n\in \ZZ}G_n(x)z^n$.
Then we have
\[
e^{H(x)}e^{-\Theta}\psi(z)e^{\Theta}e^{-H(x)}
=\prod_{l=1}^\infty(1+\beta x_l)^{-1}\cdot \mathcal{G}(z)\psi(z).
\]
\end{cor}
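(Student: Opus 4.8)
The plan is to reduce both sides to the same explicit form of the conjugated field $\psi(z)$ by chaining together the conjugation rules already established in the excerpt. First I would handle the inner conjugation $e^{-\Theta}\psi(z)e^{\Theta}$. Since \eqref{eq:e_theta_action} gives $e^{\Theta}\psi(z)e^{-\Theta}=(1+\beta z^{-1})\psi(z)$, applying $e^{-\Theta}(\,\cdot\,)e^{\Theta}$ to both sides and solving yields
\[
e^{-\Theta}\psi(z)e^{\Theta}=\frac{1}{1+\beta z^{-1}}\,\psi(z).
\]
Because $\frac{1}{1+\beta z^{-1}}$ is a scalar series in $z$ and $\beta$ (involving no fermion operators), it is central and commutes past the bosonic factors $e^{\pm H(x)}$, so the left-hand side of the corollary becomes
\[
\frac{1}{1+\beta z^{-1}}\,e^{H(x)}\psi(z)e^{-H(x)}.
\]

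Next I would evaluate the remaining conjugation $e^{H(x)}\psi(z)e^{-H(x)}$ using Lemma \ref{lemma:basic_1}\eqref{item:basic_1_4}, which gives $e^{H(x)}\psi(z)=\bigl(\sum_{i\geq0}h_i(x)z^i\bigr)\psi(z)e^{H(x)}$, and hence
\[
e^{H(x)}\psi(z)e^{-H(x)}=\Bigl(\sum_{i\geq0}h_i(x)z^i\Bigr)\psi(z)=\prod_{l=1}^\infty\frac{1}{1-x_lz}\,\psi(z),
\]
the last equality being the standard generating-function identity for the complete symmetric functions. Combining the two steps shows that the left-hand side equals
\[
\frac{1}{1+\beta z^{-1}}\prod_{l=1}^\infty\frac{1}{1-x_lz}\,\psi(z).
\]

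Finally I would check that the right-hand side produces exactly this expression. Substituting the closed form of $\mathcal{G}(z)$ from Corollary \ref{cor:generating_fn_G}, the prefactor $\prod_{l=1}^\infty(1+\beta x_l)^{-1}\cdot\mathcal{G}(z)$ simplifies: the product $\prod_l(1+\beta x_l)$ appearing in the numerator of $\mathcal{G}(z)$ cancels against $\prod_l(1+\beta x_l)^{-1}$, leaving precisely $\frac{1}{1+\beta z^{-1}}\prod_l\frac{1}{1-x_lz}$, and multiplying by $\psi(z)$ reproduces the left-hand side.

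Every step is an application of an identity already proved in the excerpt, so there is no genuine obstacle here; the corollary is essentially bookkeeping. The only point requiring mild care is the inversion step giving $e^{-\Theta}\psi(z)e^{\Theta}=(1+\beta z^{-1})^{-1}\psi(z)$, which must be read as an identity of formal series in $z^{-1}$ so that division by $1+\beta z^{-1}$ is legitimate, together with the observation that this scalar commutes with $e^{\pm H(x)}$.
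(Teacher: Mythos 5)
Your proof is correct and matches the paper's (implicit) argument: Corollary \ref{cor:Grothen} is stated without proof as immediate bookkeeping, and the intended derivation is exactly the chain you assemble --- invert \eqref{eq:e_theta_action} to get $e^{-\Theta}\psi(z)e^{\Theta}=(1+\beta z^{-1})^{-1}\psi(z)$, apply Lemma \ref{lemma:basic_1}~\eqref{item:basic_1_4} to evaluate $e^{H(x)}\psi(z)e^{-H(x)}$, and cancel $\prod_{l}(1+\beta x_l)$ against the closed form of $\mathcal{G}(z)$ from Corollary \ref{cor:generating_fn_G}. Your caveat that the inversion must be read as a formal series in $z^{-1}$ is the right level of care, since the left-hand side already involves such infinite sums.
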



For $\ell(\lambda)\leq r$, we consider the symmetric function $G_\lambda^r(x)\in \Lambda$ defined as
\[
G_\lambda^r(x):=\iota_r(G_\lambda(x_1,\dots,x_r))=\iota_r\circ \pi_r(G_\lambda(x)).
\]
We call $G_\lambda^r(x)$ the \textit{$r$-th truncation of $G_\lambda(x)$}.
\begin{prop}[{\cite[Proposition 3.1]{iwao2020freefermion}}]\label{main_Iwao_G_finite}
The $r$-th truncation of $G^r_\lambda(x)$ satisfies
\[
G^r_\lambda(x)=\bra{0}e^{H(x)}\psi_{\lambda_1-1}e^{\Theta}\psi_{\lambda_2-2}e^{\Theta}
\dots \psi_{\lambda_r-r}e^{\Theta}\cdot e^{-r\Theta}\ket{-r}.
\]
\end{prop}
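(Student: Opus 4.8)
The plan is to verify that the right-hand side, written as $\Phi(x):=\bra{0}e^{H(x)}\psi_{\lambda_1-1}e^{\Theta}\cdots\psi_{\lambda_r-r}e^{\Theta}e^{-r\Theta}\ket{-r}$, has the two defining features of $G^r_\lambda(x)=\iota_r\pi_r(G_\lambda(x))$: (a) $\Phi(x)\in\operatorname{Im}(\iota_r)$, so that $\iota_r\pi_r\Phi=\Phi$ (using $\pi_r\iota_r=\mathrm{id}$), and (b) $\pi_r(\Phi)=\pi_r(G_\lambda)$. Granting (a) and (b), one gets $\Phi=\iota_r\pi_r\Phi=\iota_r\pi_r G_\lambda=G^r_\lambda$, which is the assertion.

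First I would normalise the operator string. Iterating \eqref{eq:e_theta_action} gives $e^{k\Theta}\psi(z)e^{-k\Theta}=(1+\beta z^{-1})^k\psi(z)$, so conjugating each $e^{\Theta}$ rightward turns the product into $\psi_{\lambda_1-1}e^{\Theta}\cdots\psi_{\lambda_r-r}e^{\Theta}=A\,e^{r\Theta}$, where $A:=\psi_{\lambda_1-1}\prod_{i=2}^{r}\bigl(e^{(i-1)\Theta}\psi_{\lambda_i-i}e^{-(i-1)\Theta}\bigr)$ is a \emph{finite} sum of products of exactly $r$ creation operators $\psi_m$ with $m\ge\lambda_i-i\ge-r$. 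The trailing $e^{r\Theta}e^{-r\Theta}$ now cancels, so $\Phi=\bra{0}e^{H(x)}A\ket{-r}$, while $\ket{\lambda}^G=A\,e^{r\Theta}\ket{-r}$ by Proposition \ref{prop:main_Iwao_G}. Since $A\ket{-r}$ is a finite combination of the basis vectors $\ket{\mu}$ with $\ell(\mu)\le r$, Theorem \ref{thm:Schur} yields $\Phi=\sum_{\ell(\mu)\le r}c_\mu s_\mu(x)\in\operatorname{Im}(\iota_r)$, which is (a).

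For (b) I would compare the two presentations via $G_\lambda-\Phi=\bra{0}e^{H(x)}A(e^{r\Theta}-1)\ket{-r}$. By \eqref{eq:M_n(2)} it is enough to show that $A(e^{r\Theta}-1)\ket{-r}$ lies in the span of the $\ket{\mu}$ with $\ell(\mu)>r$, because each such $s_\mu$ belongs to $I_r$ and $\pi_r$ is continuous. Here I would use the Dirac-sea picture of the charge-$(-r)$ sector, in which $\ket{-r}$ is the vacuum whose occupied sites are exactly the levels $\le-r-1$. Each $a_{-n}$ ($n\ge1$) in $\Theta$ moves an occupied site strictly upward, so every term of $(e^{r\Theta}-1)\ket{-r}$ has positive degree and hence carries at least one hole at a level $\le-r-1$. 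The operators in $A$ create particles only at levels $\ge-r$ and therefore cannot fill such a hole, so every nonzero $\ket{\mu}$ occurring in $A(e^{r\Theta}-1)\ket{-r}$ still has an unoccupied level $\le-r-1$. Since for a charge-$0$ vector $\ket{\mu}$ every sufficiently deep level is occupied and the deepest hole is exactly $-\ell(\mu)$, such a $\ket{\mu}$ satisfies $\ell(\mu)\ge r+1$. This gives (b), and with (a) the proof is complete.

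I expect the length bookkeeping in (b) to be the main obstacle: one has to argue that the bosonic excitations produced by $e^{r\Theta}$ open holes below level $-r$ that survive the $r$ fermionic creations in $A$, and then translate ``a hole at a level $\le-r-1$'' into the Schur condition $\ell(\mu)>r$. Once this sea-theoretic observation is in place, the rest is a mechanical application of the conjugation identity from Lemma \ref{lemma:basic_1}(\ref{item:basic_1_1}) and the description \eqref{eq:M_n(1)}--\eqref{eq:M_n(2)} of $I_r$.
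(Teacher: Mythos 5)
Your proof is correct, but note that this paper never proves Proposition \ref{main_Iwao_G_finite} at all: it is imported from \cite{iwao2020freefermion}, where the logic runs in the opposite direction. In that paper the finite/truncated statement is established first, essentially by applying Wick's theorem (Theorem \ref{thm:Wick}) to the right-hand side and matching the resulting determinant against the ratio formula \eqref{eq:ratio}, and the stable presentation of Proposition \ref{prop:main_Iwao_G} is then deduced from it. You do the reverse: you take Proposition \ref{prop:main_Iwao_G} as given and derive the truncation formula from it, which is a genuinely different, determinant-free route and is self-contained relative to what this paper states. Your mechanism is sound: the conjugation identity \eqref{eq:e_theta_action} packs the operator string into $A e^{r\Theta}$ with $A$ a finite sum of products of $r$ fermions $\psi_m$, $m\ge -r$; step (a) then follows from Theorem \ref{thm:Schur}, and step (b) from the particle-hole observation that every term of $(e^{r\Theta}-1)\ket{-r}$ has a hole at a level $\le -r-1$ which the $\psi_m$ in $A$ cannot fill, while a charge-zero basis vector $\ket{\mu}$ has its deepest hole exactly at level $-\ell(\mu)$. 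Two small repairs are advisable. First, $\ket{\lambda}^G=Ae^{r\Theta}\ket{-r}$ is just the definition \eqref{eq:vecG} plus conjugation; Proposition \ref{prop:main_Iwao_G} enters only to identify $\bra{0}e^{H(x)}Ae^{r\Theta}\ket{-r}$ with $G_\lambda(x)$. Second, in (b) drop the appeal to continuity of $\pi_r$: the expansion of $A(e^{r\Theta}-1)\ket{-r}$ involves infinitely many basis vectors of bounded length and unbounded degree, and such series need not converge in the $I_n$-adic topology of $\widehat{\Lambda}$, so ``a sum of elements of $I_r$ lies in $I_r$'' is not literally available. The clean finish --- which is what your invocation of \eqref{eq:M_n(2)} really amounts to --- is to pair with $s_\nu$ for $\ell(\nu)\le r$: by Theorem \ref{thm:adjoint_action_fermion} (evaluated at $x=0$) one has $\langle G_\lambda-\Phi,s_\nu\rangle=\bra{\nu}A(e^{r\Theta}-1)\ket{-r}$, a degreewise finite computation that vanishes by Corollary \ref{cor:dual} together with your hole argument, whence $G_\lambda-\Phi\in I_r$ by \eqref{eq:M_n(2)}. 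With that rewording the argument is complete, and it is no looser than the paper's own handling of analogous remainder terms in Lemma \ref{lemma:evaluate1}.
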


\subsection{Free-fermionic presentation of dual stable Grothendieck polynomials}

The \textit{dual stable Grothendieck polynomial} $\{g_\lambda\}_\lambda\subset \Lambda$ is a family of symmetric functions that satisfies
\begin{equation}\label{eq:duality}
\langle G_\lambda,g_\mu\rangle =\delta_{\lambda,\mu}.
\end{equation}
Since $\Lambda$ is a dense subspace of $\widehat{\Lambda}$, the family $\{g_\lambda\}_\lambda$ is uniquely determined by \eqref{eq:duality}.

\begin{prop}[{\cite[Proposition 4.1]{iwao2020freefermion}}]\label{prop:main_Iwao_g}
We have
$
g_\lambda(x)=\bra{0}e^{H(x)}\ket{\lambda}^g
$.
\end{prop}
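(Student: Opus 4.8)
The plan is to reduce the claim to a single fermionic matrix coefficient and then evaluate that coefficient by Wick's theorem. Set $\tilde g_\mu(x):=\bra{0}e^{H(x)}\ket{\mu}^g$. By Lemma~\ref{lemma:basic_1}\eqref{item:basic_1_2}, Lemma~\ref{lemma:basic_2} and Corollary~\ref{cor:x(n)}, the vector $\ket{\mu}^g$ is a \emph{finite} linear combination of Schur vectors $\ket{\nu}$ with $\nu\subseteq\mu$; hence $\tilde g_\mu\in\Lambda$, and expanding in that basis together with $\langle G_\lambda,s_\nu\rangle=\bra{\nu}\ket{\lambda}^G$ (which follows from Theorem~\ref{thm:Schur}, Proposition~\ref{prop:main_Iwao_G} and the orthonormality $\bra{\nu}\ket{\kappa}=\delta_{\nu,\kappa}$ of Corollary~\ref{cor:dual}) yields $\langle G_\lambda,\tilde g_\mu\rangle={}^g\bra{\mu}\ket{\lambda}^G$, where ${}^g\bra{\mu}:=\omega(\ket{\mu}^g)$. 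Because $\Lambda$ is dense in $\widehat\Lambda$ and the Hall pairing is non-degenerate, \eqref{eq:duality} shows that it suffices to prove the fermionic identity ${}^g\bra{\mu}\ket{\lambda}^G=\delta_{\lambda,\mu}$ for all partitions $\lambda,\mu$.

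To evaluate ${}^g\bra{\mu}\ket{\lambda}^G$ I would write it out from \eqref{eq:vecG}--\eqref{eq:vecg} as a vacuum expectation $\bra{-r}(\cdots)\ket{-r}$, in which the $r$ factors $e^{-\Theta}$ produced by $\omega$ stand among the $\psi^\ast$ to the left of the central junction $\psi^\ast_{\mu_1-1}\psi_{\lambda_1-1}$, and the $r$ factors $e^{\Theta}$ stand among the $\psi$ to its right. The crucial point is that all of these exponentials can be gathered at the junction and cancelled: conjugating the interleaved fermions through the exponentials by Lemma~\ref{lemma:basic_1}\eqref{item:basic_1_1} and its $\psi^\ast$-analogue $e^{\Theta}\psi^\ast(z)e^{-\Theta}=(1+\beta z)^{-1}\psi^\ast(z)$ (obtained from \eqref{eq:e_theta_action} in the same way), the left block becomes $\widehat\Phi_\mu\,e^{-r\Theta}$ and the right block $e^{r\Theta}\,\widehat\Psi_\lambda$, so that $e^{-r\Theta}e^{r\Theta}=1$ removes them entirely. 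Here $\widehat\Phi_\mu$ is a finite product of dressed operators $e^{-m\Theta}\psi^\ast_c e^{m\Theta}=\sum_{j\geq 0}\binom{m}{j}\beta^j\psi^\ast_{c-j}$, and $\widehat\Psi_\lambda$ a product of $e^{-m\Theta}\psi_d e^{m\Theta}=\sum_{l\geq 0}(-\beta)^l\binom{m+l-1}{l}\psi_{d+l}$. The remaining expectation $\bra{-r}\widehat\Phi_\mu\widehat\Psi_\lambda\ket{-r}$ pairs $r$ operators $\psi^\ast$ against $r$ operators $\psi$ between the charged vacua, and Corollary~\ref{cor:dual} evaluates it as a determinant of Kronecker deltas.

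The hard part will be the final bookkeeping: once Corollary~\ref{cor:dual} has turned each monomial into a product of $\delta$'s, one must show that every off-diagonal contribution ($\lambda\neq\mu$) cancels, leaving exactly $\delta_{\lambda,\mu}$. The cancellation comes from the opposition of the two dressings — the $\psi^\ast$-side is a finite, index-lowering, positive-binomial sum, whereas the $\psi$-side is an index-raising \emph{alternating} series — whose convolution telescopes to the Kronecker delta; this is the fermionic shadow of $(1+\beta z)^{m}(1+\beta z)^{-m}=1$, and combinatorially it is precisely the rook-strip cancellation of Lemma~\ref{lemma:rook} together with the $(-\beta)$-relations of Lemma~\ref{lemma:basic_2} and Corollary~\ref{cor:x(n)}. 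A secondary point to check is the boundary behaviour of the summation indices: terms $\psi^\ast_{c-j}$ with $c-j<-r$ must not violate the hypotheses of Corollary~\ref{cor:dual}, which is guaranteed by $\lambda,\mu$ being genuine partitions, so that $\mu_i-i\geq -r$ and $\lambda_i-i\geq -r$. One can already verify the whole mechanism in the rank-one case $r=1$, where the two terms of $\widehat\Phi_{(q)}$ exactly cancel the $l=q-p$ and $l=q-p-1$ terms of $\widehat\Psi_{(p)}$.
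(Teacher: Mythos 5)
The paper itself offers no argument for Proposition~\ref{prop:main_Iwao_g}: it is quoted from \cite[Proposition 4.1]{iwao2020freefermion}. So your proposal must be judged on its own merits, and its skeleton is sound. The reduction — $\ket{\mu}^g$ is a finite combination of Schur vectors, hence $\tilde g_\mu\in\Lambda$, the pairing identity $\langle G_\lambda,\tilde g_\mu\rangle={}^g\bra{\mu}\ket{\lambda}^G$, and the appeal to the uniqueness in \eqref{eq:duality} — is correct, as are the dressing formulas $e^{-m\Theta}\psi^\ast_c e^{m\Theta}=\sum_j\binom{m}{j}\beta^j\psi^\ast_{c-j}$ and $e^{-m\Theta}\psi_d e^{m\Theta}=\sum_l(-\beta)^l\binom{m+l-1}{l}\psi_{d+l}$, the telescoping $e^{-r\Theta}e^{r\Theta}=1$ at the junction, and the disposal of indices below $-r$ (such terms are killed by $\bra{-r}$ and $\ket{-r}$, regardless of $\lambda,\mu$ being partitions).

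The genuine gap is exactly the step you defer as ``bookkeeping,'' and the heuristic you offer for it does not close it. After Wick's theorem, ${}^g\bra{\mu}\ket{\lambda}^G=\det(M_{ij})_{1\le i,j\le r}$ with $M_{ij}=\beta^{d_{ij}}\binom{i-j}{d_{ij}}$, where $d_{ij}=(\mu_j-j)-(\lambda_i-i)$ and $M_{ij}=0$ when $d_{ij}<0$. Your ``fermionic shadow of $(1+\beta z)^{m}(1+\beta z)^{-m}=1$'' accounts only for the diagonal: the two dressing exponents agree (both equal $r-i+1$) precisely when $i=j$, giving $M_{ii}=\delta_{\lambda_i,\mu_i}$; for $i\neq j$ the exponents are $r-j+1$ and $r-i+1$, the convolution produces $(1+y)^{i-j}$ rather than $1$, and the off-diagonal entries are in general nonzero (e.g.\ $\lambda=(1,1)$, $\mu=(1)$, $r=2$ gives $M_{21}=\beta$). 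So the matrix does not telescope to the identity; one must prove $\det M=\delta_{\lambda,\mu}$, and the delicate case is $\lambda\subsetneq\mu$, where no triangularity or zero-block argument is available. The statement is true and your strategy can be completed — for instance, one shows that $\prod_i M_{i\sigma(i)}=0$ for every non-identity permutation $\sigma$: nonvanishing forces $\mu_{\sigma(i)}>\lambda_i$ whenever $\sigma(i)>i$ and $\mu_{\sigma(i)}\le\lambda_i$ whenever $\sigma(i)<i$, and applying this to the smallest element $i_0$ of a nontrivial cycle and to $i_1=\sigma^{-1}(i_0)$ yields $\mu_{i_0}\ge\mu_{i_0+1}>\lambda_{i_0}\ge\lambda_{i_1}\ge\mu_{i_0}$, a contradiction — but this combinatorial argument (or an equivalent) is the actual mathematical content of the proposition and is absent from your proposal. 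Two further cautions: your rank-one verification is vacuous as a test of precisely this point, since at $r=1$ there are no off-diagonal Wick pairings; and Lemma~\ref{lemma:rook} does not supply the missing cancellation — it describes $e^{\theta}\ket{\lambda}^g$ and has no direct bearing on this determinant.
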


\subsection{Evaluation of remainder terms}

The following lemmas will be used in Sections \ref{sec:G-expan}--\ref{sec:expansion_of_s^perp} to evaluate ``remainder terms'' which are expected to be ``sufficiently small,'' or ``sufficiently near to $0$,'' in the topological $k$-linear space $\widehat{\Lambda}$.
\begin{lemma}\label{lemma:evaluate1}
Let $n_1\geq n_2\geq \dots\geq n_r>0$, $s>0$, and $i>0$.
Then the following $(\ref{item:evaluate1_1}$--$\ref{item:evaluate1_3})$ hold.
\begin{enumerate}
\item \label{item:evaluate1_1}
$\bra{0}e^{H(x)}\psi_{n_1-1}\psi_{n_2-2}\dots \psi_{n_r-r}\ket{-r}\in I_{r-1}$.
\item \label{item:evaluate1_2}
$\bra{0}e^{H(x)}\psi_{n_1-1}\psi_{n_2-2}\dots \psi_{n_r-r}e^{s\Theta}\ket{-r}\in I_{r-1}$.
\item \label{item:evaluate1_3}
$\bra{0}e^{H(x)}\psi_{n_1-1}e^{\Theta}\psi_{n_2-2}e^{\Theta}\dots \psi_{n_r-r}e^\Theta a_{-i}\ket{-r}\in I_r$.
\end{enumerate}
\end{lemma}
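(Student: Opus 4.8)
The plan is to test membership in the ideals $I_{r-1}$, $I_r$ through the Schur pairing. By \eqref{eq:M_n(2)}, an element $f\in\widehat\Lambda$ lies in $I_N$ iff $\langle f,s_\lambda\rangle=0$ for every $\ell(\lambda)\leq N$. Writing each of the three expressions as $f=\bra{0}e^{H(x)}\ket{v}$ with the evident $\ket{v}$, I would first record the reduction $\langle f,s_\lambda\rangle=\langle\lambda\vert v\rangle$: by Theorem \ref{thm:adjoint_action_fermion} with $g=s_\lambda=\bra{0}e^{H(x)}\ket{\lambda}$ one has $s_\lambda^\perp f=\bra{\lambda}e^{H(x)}\ket{v}$, and taking the constant term (equivalently, setting all $x_i=0$, so that $e^{H(x)}\to\mathrm{id}$) gives $\langle f,s_\lambda\rangle=\langle s_\lambda^\perp f,1\rangle=\langle\lambda\vert v\rangle$. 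For \eqref{item:evaluate1_1}, $\ket{v}=\ket{(n_1,\dots,n_r)}$ is the Schur vector of the partition $\mu=(n_1,\dots,n_r)$ of length exactly $r$, so by orthonormality (Corollary \ref{cor:dual}) $\langle\lambda\vert v\rangle=\delta_{\lambda,\mu}=0$ whenever $\ell(\lambda)\leq r-1$. This is the base case.

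For \eqref{item:evaluate1_2} I would push $e^{s\Theta}$ to the left through the fermions. Since $e^{-s\Theta}\psi_a e^{s\Theta}=\sum_{k\geq 0}\binom{-s}{k}\beta^k\psi_{a+k}$ (a scaled version of Lemma \ref{lemma:basic_1}\eqref{item:basic_1_1}), one gets $\ket{v}=e^{s\Theta}\sum_\nu d_\nu\ket{\nu}$, where each $\ket{\nu}$ arises from $\psi_{a_1}\cdots\psi_{a_r}\ket{-r}$ with $a_j=n_j-j+k_j\geq n_j-j\geq-(r-1)$. The key observation is that every nonzero such vector is $\pm\ket{\nu}$ for a partition with $\ell(\nu)=r$: after anti-symmetrizing to strictly decreasing indices $b_1>\dots>b_r$ one has $b_r\geq-(r-1)$, hence $\nu_r=b_r+r\geq 1$. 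Consequently $\langle\lambda\vert v\rangle=\sum_\nu d_\nu\,\langle\lambda\vert e^{s\Theta}\vert\nu\rangle$, and each factor equals $\langle\prod_l(1+\beta x_l)^s\,s_\nu,\;s_\lambda\rangle$ by (the $s$-th power of) Lemma \ref{lemma:basic_1}\eqref{item:basic_1_3} together with the reduction above. Because $s_\nu$ has length $r$, the Littlewood--Richardson rule forces every Schur component of $\prod_l(1+\beta x_l)^s\,s_\nu$ to have length $\geq r$, so the pairing vanishes for $\ell(\lambda)\leq r-1$.

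For \eqref{item:evaluate1_3} I would instead move the single boson $a_{-i}$ to the far left. Using $[a_{-i},\psi_n]=\psi_{n+i}$, $[a_{-i},e^\Theta]=0$, and $\bra{0}e^{H(x)}a_{-i}=p_i(x)\bra{0}e^{H(x)}$ (which follows from $[H(x),a_{-i}]=p_i(x)$ and $\bra{0}a_{-i}=0$), the expression collapses to $f=p_i(x)\,G_\mu(x)-\sum_{p=1}^r G_{\mu+ie_p}(x)$, where $\mu=(n_1,\dots,n_r)$ and $G_{\mu+ie_p}=\bra{0}e^{H(x)}\ket{\mu+ie_p}^G$ is the (generalized) stable Grothendieck function obtained by adding $i$ to the $p$-th entry. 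By \eqref{eq:M_n(1)} it suffices to show $\pi_r f=0$, i.e. the identity $\sum_{p=1}^r G_{\mu+ie_p}(x_1,\dots,x_r)=p_i(x_1,\dots,x_r)\,G_\mu(x_1,\dots,x_r)$ in $r$ variables. Under $\pi_r$ each term is given by the determinant \eqref{eq:ratio} with $\lambda$ replaced by the relevant exponent sequence; passing from $\mu$ to $\mu+ie_p$ multiplies the $p$-th column of the numerator matrix $M$ by the row factor $x_a^i$. Hence $\sum_p\det M^{(p)}=\frac{d}{ds}\big|_{s=0}\det\big((1+s\,x_a^i)M_{ab}\big)=\big(\textstyle\sum_a x_a^i\big)\det M=p_i\det M$ by multilinearity of the determinant, and dividing by $\prod_{a<b}(x_a-x_b)$ yields exactly the required identity.

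The main obstacle is \eqref{item:evaluate1_3}: organizing the commutation of $a_{-i}$ past the alternating product $\psi_{n_1-1}e^\Theta\cdots\psi_{n_r-r}e^\Theta$ with the correct signs, and---more essentially---justifying that the finite-variable specialization $\pi_r\big(\bra{0}e^{H(x)}\ket{m}^G\big)$ is given by the determinant of \eqref{eq:ratio} for an \emph{arbitrary} exponent sequence $m$ (not only for partitions), so that the clean multilinear identity $\sum_p\det M^{(p)}=p_i\det M$ may be applied termwise. Once this determinantal presentation (a variant of Proposition \ref{main_Iwao_G_finite}) is in force, all three statements follow as above; parts \eqref{item:evaluate1_1} and \eqref{item:evaluate1_2} are comparatively routine, the only subtlety there being the length bound $\ell(\nu)=r$ forced by the positivity $n_j\geq 1$.
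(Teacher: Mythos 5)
Parts (i) and (ii) of your proposal are correct and amount to a light repackaging of the paper's own argument: (i) is Theorem \ref{thm:Schur} plus positivity of the parts, and in (ii) your conjugation $e^{-s\Theta}\psi_a e^{s\Theta}=\sum_{k\ge 0}\binom{-s}{k}\beta^k\psi_{a+k}$ (indices only increase, so every surviving Schur vector has $r$ positive parts), followed by the Pieri length bound, plays the role of the paper's expansion of $e^{s\Theta}\ket{-r}$ into terms handled by (i). The one point to flag is that the conjugated expression is an \emph{infinite} sum of fermions, so the termwise rearrangement should be justified by pairing against a fixed $\bra{\lambda}$, where only finitely many terms contribute.

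Part (iii) is where the proposal breaks down, and you identify the breaking point yourself. Your reduction $f=p_i(x)G_\mu(x)-\sum_{p=1}^{r}G_{\mu+i\ee_p}(x)$ is correct (it is exactly the paper's \eqref{eq:a_-i_on_G} paired with $\bra{0}e^{H(x)}$), and your multilinearity identity $\sum_p\det M^{(p)}=p_i\det M$, via $\det\bigl((I+sD)M\bigr)=\prod_a(1+sx_a^i)\det M$, is also correct. But the whole argument then rests on the claim that $\pi_r\bigl(\bra{0}e^{H(x)}\ket{m}^G\bigr)$ is given by the bialternant of \eqref{eq:ratio} for non-partition exponent sequences $m$, and this is left unproved. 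Worse, as you state it (``arbitrary exponent sequence'') the claim is false: for $r=1$ and $m=(-1)$, Lemma \ref{lemma:basic_2} (\ref{item:basic_2_1}) gives $\ket{-1}^G=(-\beta)\ket{0}^G$, so $\pi_1(G_{(-1)})=-\beta$, whereas \eqref{eq:ratio} would give $x_1^{-1}$, which is not even a polynomial. The version you actually need (all entries of $m$ nonnegative, which covers $m=\mu+i\ee_p$) is true, but proving it amounts to redoing the Wick/residue coefficient extraction from the generating function $\bra{0}e^{H(x)}\psi(z_1)e^{\Theta}\cdots\psi(z_r)e^{\Theta}\ket{-r}$, with the exponent range entering exactly where the counterexample lives; that is a computation on the scale of Section \ref{sec:det_G//}, not a remark. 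The paper's proof of (iii) sidesteps all of this: it expands $a_{-i}\ket{-r}$ as the finite sum \eqref{eq:a_-ir} of fermion monomials on lower vacua (the key point being that $a_{-i}$ creates exactly one particle at an index $\ge -r$), pushes every $e^{\Theta}$ to the right by Lemma \ref{lemma:basic_1} (\ref{item:basic_1_1}), and then applies the reasoning of (ii): each resulting Schur component has at least $r+1$ positive parts, hence lies in $I_r$. You should either adopt that route or supply an actual proof of the nonnegative-exponent bialternant formula.
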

\begin{proof}
\eqref{item:evaluate1_1} follows from Theorem \ref{thm:Schur}.

\eqref{item:evaluate1_2}: Let
$
X_{-r}:=
\binom{s}{1}\beta\psi_{-r}+\binom{s}{2}\beta^2\psi_{-r+1}+\cdots
+\binom{s}{s}\beta^s\psi_{-r-1+s}
$.
Since $e^{s\Theta}\psi_{-r-1}=(\psi_{-r-1}+X_{-r})e^{s\Theta}$, we have
\begin{align*}
&\bra{0}e^{H(x)}\psi_{n_1-1}\dots \psi_{n_r-r}e^{s\Theta}\ket{-r}\\
&=
\bra{0}e^{H(x)}\psi_{n_1-1}\dots \psi_{n_r-r}\ket{-r}
+\bra{0}e^{H(x)}\psi_{n_1-1}\dots \psi_{n_r-r}X_{-r}\ket{-r-1}\nonumber\\
&+
\bra{0}e^{H(x)}\psi_{n_1-1}\dots \psi_{n_r-r}(\psi_{-r-1}+X_{-r})X_{-r-1}\ket{-r-2}\nonumber\\
&+
\bra{0}e^{H(x)}\psi_{n_1-1}\dots \psi_{n_r-r}(\psi_{-r-1}+X_{-r})(\psi_{-r-2}+X_{-r-1})X_{-r-2}\ket{-r-3}+\cdots
\end{align*}
By \eqref{item:evaluate1_1}, each term on the right hand side is contained $I_{r-1}$.
This concludes \eqref{item:evaluate1_2}.

\eqref{item:evaluate1_3}:
Since 
\begin{equation}\label{eq:a_-ir}
a_{-i}\ket{-r}
=
\sum_{p=1}^i(\psi_{-r-1}\psi_{-r-2}\dots \psi_{-r-p-1})\psi_{-r-p+i}\ket{-r-p},
\end{equation}
\eqref{item:evaluate1_3} follows from \eqref{item:evaluate1_2} and Lemma \ref{lemma:basic_1} \eqref{item:basic_1_1}.
\end{proof}

\begin{lemma}\label{lemma:evaluate2}
Let $r>0$ and $s\geq i>0$.
Then the following $(\ref{item:evaluate2_1}$--$\ref{item:evaluate2_2})$ hold
\begin{enumerate}
\item \label{item:evaluate2_1}
$\psi_{-r-1}\psi_{-r-2}\dots \psi_{-r-s} a_{-i}\ket{-r}=0$.
\item \label{item:evaluate2_2}
$\psi_{-r-1}e^{-\theta}\psi_{-r-2}\dots e^{-\theta}\psi_{-r-s} a_{-i}\ket{-r-s}=0$.
\end{enumerate}
\begin{proof}
\eqref{item:evaluate2_1} follows from \eqref{eq:a_-ir}.
\eqref{item:evaluate2_2} follows from the equation 
\begin{align*}
&\psi_{-r-1}e^{-\theta}\psi_{-r-2}\dots e^{-\theta}\psi_{-r-s} a_{-i}\ket{-r-s}\\
&=e^{-(s-1)\theta}\psi_{-r-1}\psi_{-r-2}\dots \psi_{-r-s} a_{-i}\ket{-r-s}.
\end{align*}
\end{proof}
\begin{rem}\label{rem:note}
To call the reader's attention, we would note
\[
\psi_{-r-1}e^{-\theta}\psi_{-r-2}\dots e^{-\theta}\psi_{-r-s}\mbox{\fbox{$e^{-\theta}$}} a_{-i}\ket{-r-s}\neq 0,
\]
in which the extra $e^{-\theta}$ at the boxed position is added to Lemma \ref{lemma:evaluate2} \eqref{item:evaluate2_2}.
\end{rem}

\end{lemma}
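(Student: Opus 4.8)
The plan is to dispatch the two identities by the same underlying mechanism---manufacturing a repeated free fermion and invoking $\psi_m^2=0$---but to reach that mechanism differently in each case: for \eqref{item:evaluate2_1} directly from \eqref{eq:a_-ir}, and for \eqref{item:evaluate2_2} only after first pulling the operators $e^{-\theta}$ out to the left. For \eqref{item:evaluate2_1} I would expand $a_{-i}\ket{-r}$ by \eqref{eq:a_-ir} into a finite sum of vectors, each of which begins with the creation operator $\psi_{-r-1}$. Multiplying on the left by the prefix $\psi_{-r-1}\psi_{-r-2}\cdots\psi_{-r-s}$ then places two copies of $\psi_{-r-1}$ into the same monomial; anticommuting them together (with a sign) and using $\psi_{-r-1}^2=0$, which is the relation $[\psi_{-r-1},\psi_{-r-1}]_+=0$ from \eqref{eq:free-fermions-relation}, annihilates every summand. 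Hence the whole vector vanishes, and this works already for any $s\ge 1$.

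For \eqref{item:evaluate2_2} the decisive step, and the one I expect to be the main obstacle, is the operator identity
\[
\psi_{-r-1}e^{-\theta}\psi_{-r-2}e^{-\theta}\cdots e^{-\theta}\psi_{-r-s}
=e^{-(s-1)\theta}\,\psi_{-r-1}\psi_{-r-2}\cdots\psi_{-r-s},
\]
which I would prove by induction on $s$, moving the $e^{-\theta}$ factors to the left one at a time. The engine is Lemma~\ref{lemma:basic_1}\eqref{item:basic_1_2}, rewritten as $\psi_ne^{-\theta}=e^{-\theta}(\psi_n+\beta\psi_{n-1})$. When a single $e^{-\theta}$ is carried leftward across a block $\psi_{-r-1}\cdots\psi_{-r-k+1}$, every factor $\psi_{-r-j}$ acquires a correction $\beta\psi_{-r-j-1}$; but any monomial in the resulting expansion that actually selects a correction term meets, either at the next factor or at the trailing explicit $\psi_{-r-k}$, a second copy of the same fermion, and therefore vanishes by $\psi^2=0$. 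Only the uncorrected monomial survives, which is exactly what allows the $e^{-\theta}$'s to coalesce into the single factor $e^{-(s-1)\theta}$.

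After this reduction, \eqref{item:evaluate2_2} becomes $e^{-(s-1)\theta}\psi_{-r-1}\cdots\psi_{-r-s}\,a_{-i}\ket{-r-s}=0$, so it remains to show $\psi_{-r-1}\cdots\psi_{-r-s}\,a_{-i}\ket{-r-s}=0$. I would again apply \eqref{eq:a_-ir}, now with $r$ replaced by $r+s$: each summand of $a_{-i}\ket{-r-s}$ carries a fermion $\psi_{-r-s-p+i}$ whose index, since $1\le p$ and $i\le s$, falls in the range $\{-r-s,\dots,-r-1\}$ spanned by the prefix, so that a repeated fermion again appears and each summand dies. This is the only place where the hypothesis $s\ge i$ is genuinely used. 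The subtle point to guard against---highlighted by Remark~\ref{rem:note}---is that there must be no leftover $e^{-\theta}$ standing between $\psi_{-r-s}$ and $a_{-i}$ after the reduction: the identity above deposits all of them to the left precisely because the trailing slot is occupied by the bare $\psi_{-r-s}$, and inserting an extra $e^{-\theta}$ there would reintroduce the $\beta\psi_{-r-s-1}$ correction and break the cancellation.
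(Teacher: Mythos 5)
Your treatment of \eqref{item:evaluate2_2} is correct and follows the same route as the paper: the interleaving identity that pulls the $e^{-\theta}$'s out to the left (every correction term $\beta\psi_{m-1}$ dying against a neighbouring fermion by $\psi_m^2=0$), followed by the vanishing of $\psi_{-r-1}\cdots\psi_{-r-s}\,a_{-i}\ket{-r-s}$, which you obtain from the fact that the ``moved'' fermion $\psi_{-r-s-p+i}$ has index in $\{-r-s,\dots,-r-1\}$ precisely because $1\le p\le i\le s$. You have in effect also repaired a defect in the paper: its proof of \eqref{item:evaluate2_2} reduces to exactly this statement about the ket $\ket{-r-s}$, which is \emph{not} literally item \eqref{item:evaluate2_1} (whose ket is $\ket{-r}$), and you prove it directly, with the hypothesis $s\ge i$ entering exactly where you say it does.

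Your argument for \eqref{item:evaluate2_1}, however, contains a genuine error: the claim that every summand of $a_{-i}\ket{-r}$ ``begins with the creation operator $\psi_{-r-1}$.'' You have taken \eqref{eq:a_-ir} at face value, but that formula as printed cannot be right, since its $p$-th term applies $p+2$ fermions to $\ket{-r-p}$ and therefore does not conserve charge; the correct expansion is
\[
a_{-i}\ket{-r}=\sum_{p=1}^{i}\pm\,(\psi_{-r-1}\cdots\psi_{-r-p+1})\,\psi_{-r-p+i}\,\ket{-r-p},
\]
whose $p=1$ term $\psi_{-r-1+i}\ket{-r-1}$ contains no factor $\psi_{-r-1}$ at all. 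Your repeated-fermion mechanism thus disposes only of the terms with $p\ge 2$; the $p=1$ term vanishes only when $s\ge 2$, and then by a different mechanism ($\psi_{-r-s}$, of index at most $-r-2$, annihilates $\ket{-r-1}$ because that site is occupied). Consequently your assertion that the argument ``works already for any $s\ge 1$'' is wrong, and indeed so is item \eqref{item:evaluate2_1} as printed: for $s=i=1$ one computes $\psi_{-r-1}a_{-1}\ket{-r}=\psi_{-r-1}\psi_{-r}\ket{-r-1}=-\ket{-r+1}\neq 0$, a case permitted by the hypothesis $s\ge i>0$. The statement that is true under exactly this hypothesis, that is needed for \eqref{item:evaluate2_2} and for Proposition \ref{prop:action_on_g}, and that you actually establish inside your proof of \eqref{item:evaluate2_2}, is $\psi_{-r-1}\cdots\psi_{-r-s}\,a_{-i}\ket{-r-s}=0$; in other words, the ket in \eqref{item:evaluate2_1} should read $\ket{-r-s}$, and a correct write-up should either prove that version or restrict \eqref{item:evaluate2_1} to $s\ge 2$.
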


\section{Free-fermionic presentation of skew stable Grothendieck polynomials}\label{sec:skew}

In this section, we give a free-fermionic presentation of the stable Grothendieck polynomials corresponding to a skew diagram.

Let $\Delta:\widehat{\Lambda}\to \widehat{\Lambda}\otimes \widehat{\Lambda}$ be the coproduct on $\widehat{\Lambda}$ with $(\Delta f)(x,y)=f(x,y)$.
Buch \cite{buch2002littlewood} defined the symmetric function $G_{\lambda\slsl\mu}(x)$ such that
\begin{equation}\label{eq:def_of_G//}
\Delta (G_\lambda)=\sum_{\mu\subset \lambda}G_\mu\otimes G_{\lambda\slsl \mu},
\end{equation}
which is equivalent to $G_{\lambda\slsl\mu}(x)=g_\mu^\perp G_\lambda(x)$.
\begin{thm}[Free-fermionic presentation of $G_{\lambda\slsl \mu}$]\label{thm:G//}
We have
\[
G_{\lambda\slsl\mu}(x)={}^g\bra{\mu}e^{H(x)}\ket{\lambda}^G,\qquad \mbox{where}\quad
{}^g\bra{\mu}:=\omega(\ket{\mu}^g).
\]
\end{thm}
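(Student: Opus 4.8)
The plan is to derive the claimed formula by combining the adjoint-action principle (Theorem~\ref{thm:adjoint_action_fermion}) with the two free-fermionic presentations already established: Proposition~\ref{prop:main_Iwao_G} for $G_\lambda$ and Proposition~\ref{prop:main_Iwao_g} for $g_\mu$. The starting point is the defining relation $G_{\lambda\slsl\mu}(x)=g_\mu^\perp G_\lambda(x)$, which is stated just above the theorem. The whole strategy is to recognize this as an instance of a skew operation $g^\perp f$ and to invoke Theorem~\ref{thm:adjoint_action_fermion}, which converts such an adjoint action into a single vacuum expectation value on the Fock space.

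**First I would** set $f(x)=G_\lambda(x)=\bra{0}e^{H(x)}\ket{\lambda}^G$ by Proposition~\ref{prop:main_Iwao_G}, and $g(x)=g_\mu(x)=\bra{0}e^{H(x)}\ket{\mu}^g$ by Proposition~\ref{prop:main_Iwao_g}. Theorem~\ref{thm:adjoint_action_fermion} states that for $\ket{v},\ket{w}\in\mathcal{F}$ with $f(x)=\bra{0}e^{H(x)}\ket{v}$ and $g(x)=\bra{0}e^{H(x)}\ket{w}$, one has $g^\perp f(x)=\bra{w}e^{H(x)}\ket{v}$ where $\bra{w}=\omega(\ket{w})$. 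Applying this with $\ket{v}=\ket{\lambda}^G$ and $\ket{w}=\ket{\mu}^g$ yields directly
\[
G_{\lambda\slsl\mu}(x)=g_\mu^\perp G_\lambda(x)=\bra{w}e^{H(x)}\ket{\lambda}^G,
\qquad \bra{w}=\omega(\ket{\mu}^g)={}^g\bra{\mu},
\]
which is exactly the asserted identity once we recall the notation ${}^g\bra{\mu}:=\omega(\ket{\mu}^g)$ introduced in the statement.

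**The one subtlety** — and the step I expect to require the most care — is that Theorem~\ref{thm:adjoint_action_fermion} is formulated for elements of $\Lambda$, whereas $G_\lambda$ lives only in the completion $\widehat{\Lambda}$, and the skew operator $g_\mu^\perp$ and the bilinear form have only been extended to $\widehat{\Lambda}\times\Lambda\to k$. Thus the plan is to verify that the adjoint-action formula extends continuously to $\widehat{\Lambda}$: the map $f\mapsto g_\mu^\perp f$ should be shown to be continuous with respect to the $I_n$-topology, so that the identity valid on the dense subspace $\Lambda$ passes to $G_\lambda\in\widehat{\Lambda}$. Concretely, I would express $G_\lambda$ as the limit of its truncations $G_\lambda^r$ (whose fermionic presentation is Proposition~\ref{main_Iwao_G_finite}), apply the finite-variable version of Theorem~\ref{thm:adjoint_action_fermion} to each truncation, and take the limit, using that $g_\mu^\perp$ lowers $\ell(\lambda)$ in a controlled way so the remainder terms stay in $I_{n}$ for increasing $n$.

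**Alternatively**, if one prefers to avoid the limiting argument, the identity can be checked by pairing both sides against every Schur function $s_\nu$ and using the duality $\langle G_\lambda,g_\mu\rangle=\delta_{\lambda,\mu}$ together with Corollary~\ref{cor:dual}; but the cleanest route is the direct application of Theorem~\ref{thm:adjoint_action_fermion} with the continuity remark, since the coproduct definition \eqref{eq:def_of_G//} already packages $G_{\lambda\slsl\mu}$ as $g_\mu^\perp G_\lambda$. I expect the bulk of the genuine content to be this continuity/completion bookkeeping, the fermionic computation itself being immediate from the two cited presentations.
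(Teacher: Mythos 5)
Your proposal is correct and is essentially the paper's own argument: the paper's proof is exactly the one-line application of Theorem~\ref{thm:adjoint_action_fermion} with $\ket{v}=\ket{\lambda}^G$, $\ket{w}=\ket{\mu}^g$ via Propositions~\ref{prop:main_Iwao_G} and~\ref{prop:main_Iwao_g}, starting from the identity $G_{\lambda\slsl\mu}=g_\mu^\perp G_\lambda$ stated just before the theorem. Your extra discussion of extending the adjoint action continuously to the completion $\widehat{\Lambda}$ addresses a point the paper leaves implicit, but it does not alter the route.
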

\begin{proof}
It immediately follows from Theorem \ref{thm:adjoint_action_fermion} and Propositions \ref{prop:main_Iwao_G}, \ref{prop:main_Iwao_g}.
\end{proof}

In \cite[\S 6]{buch2002littlewood}, it is proved the fact that the \textit{skew stable Grothendieck polynomials} $G_{\lambda/\mu}(x)$ are unique symmetric functions that satisfy
\[
G_{\lambda\slsl\mu}(x)=\sum_{\mu/\sigma\,\mathrm{rook\, strip}}\beta^{\zet{\mu/\sigma}}G_{\lambda/\sigma}(x).
\]

\begin{thm}[Free-fermionic presentation of $G_{\lambda/ \mu}$]\label{thm:G/}
We have
\[
G_{\lambda/\mu}(x)={}^g\bra{\mu}e^{-\Theta}e^{H(x)}\ket{\lambda}^G.
\]
\end{thm}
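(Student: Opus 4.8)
The plan is to verify that the proposed expression satisfies Buch's defining relation for $G_{\lambda/\mu}$ and then appeal to uniqueness. Concretely, I would set
\[
\tilde{G}_{\lambda/\sigma}(x):={}^g\bra{\sigma}e^{-\Theta}e^{H(x)}\ket{\lambda}^G
\]
as a candidate and aim to show
\[
G_{\lambda\slsl\mu}(x)=\sum_{\mu/\sigma\,\mathrm{rook\, strip}}\beta^{\zet{\mu/\sigma}}\tilde{G}_{\lambda/\sigma}(x),
\]
the relation recalled just before the statement. Since this system is triangular in the size of $\sigma$ (the term $\sigma=\mu$ appears with coefficient $1$ and every other $\sigma$ satisfies $\zet{\sigma}<\zet{\mu}$), it determines the family $\{G_{\lambda/\sigma}\}$ uniquely; hence once the displayed identity is established, $\tilde{G}_{\lambda/\mu}=G_{\lambda/\mu}$ follows.

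The key computational step is to transfer the rook-strip summation onto the bra vector. By Lemma \ref{lemma:rook}, in its dual form,
\[
\sum_{\mu/\sigma\,\mathrm{rook\, strip}}\beta^{\zet{\mu/\sigma}}\,{}^g\bra{\sigma}={}^g\bra{\mu}e^{\Theta}.
\]
First I would substitute the definition of $\tilde{G}_{\lambda/\sigma}$ into the right-hand side of the target relation and pull the sum through the fixed factor $e^{-\Theta}e^{H(x)}\ket{\lambda}^G$, so that it acts only on the bras. The dual rook-strip identity above then collapses the sum, giving
\[
\sum_{\mu/\sigma\,\mathrm{rook\, strip}}\beta^{\zet{\mu/\sigma}}\tilde{G}_{\lambda/\sigma}(x)={}^g\bra{\mu}e^{\Theta}e^{-\Theta}e^{H(x)}\ket{\lambda}^G={}^g\bra{\mu}e^{H(x)}\ket{\lambda}^G.
\]
The cancellation $e^{\Theta}e^{-\Theta}=1$ is immediate, and by Theorem \ref{thm:G//} the final expression is exactly $G_{\lambda\slsl\mu}(x)$, which is the identity to be proved.

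What remains is a well-definedness remark, which I would dispatch in passing: I should confirm that $\tilde{G}_{\lambda/\mu}(x)$ is a genuine element of $\widehat{\Lambda}$, so that the uniqueness statement, phrased for symmetric functions, applies. This holds because each $G_{\lambda\slsl\sigma}\in\widehat{\Lambda}$ and the triangular system inverts within $\widehat{\Lambda}$; equivalently, the expression ${}^g\bra{\mu}e^{-\Theta}e^{H(x)}\ket{\lambda}^G$ lies in $\widehat{\Lambda}$ for the same reasons the expressions in Theorems \ref{thm:G//} and \ref{thm:adjoint_action_fermion} do. I do not anticipate a serious obstacle here; the only point requiring care is matching the direction (bra versus ket) and the sign of the exponent when invoking Lemma \ref{lemma:rook}, so that the $e^{\Theta}$ produced by the rook-strip summation precisely cancels the $e^{-\Theta}$ appearing in the statement.
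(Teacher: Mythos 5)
Your proposal is correct and is essentially the paper's own argument spelled out in full: the paper's proof consists of the single line ``It is a direct consequence of Lemma \ref{lemma:rook},'' and your verification — applying the dual form of Lemma \ref{lemma:rook} to collapse the rook-strip sum into ${}^g\bra{\mu}e^{\Theta}$, cancelling against $e^{-\Theta}$, invoking Theorem \ref{thm:G//}, and concluding by the uniqueness (triangularity) of Buch's defining relation — is exactly the reasoning the paper leaves implicit. No gaps; your care about the bra-side version of the lemma and the sign of the exponent is precisely the right point to check.
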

\begin{proof}
It is a direct consequence of Lemma \ref{lemma:rook}.
\end{proof}

\subsection{Determinantal formula for $G_{\lambda\slsl \mu}(x)$}\label{sec:det_G//}

Once a free-fermionic presentation have been obtained, it is a straightforward task to derive a determinantal formula.

Let
\begin{align}
&\Phi=\Phi(z_1,\dots,z_r,w_1,\dots,w_r)\nonumber\\
&:=
\bra{-r}e^{-\Theta}\psi^\ast(w_r)\dots e^{-\Theta}\psi^\ast(w_1)
e^{H(x)}
\psi(z_1)e^\Theta  \dots \psi(z_r)e^\Theta\ket{-r}
\label{eq:Phi_expectation_value}
\end{align}
be a ``generating function'' of the skew stable Grothendieck polynomials, in which the coefficient of $z_1^{\lambda_1-1}\dots z_r^{\lambda_r-r}w_1^{\mu_1-1}\dots w_r^{\mu_r-r}$ is $G_{\lambda\slsl\mu}(x)$.

By letting
\[
\begin{array}{lll}
A_i&:=e^{-(r-i+1)\Theta }\psi(z_i)e^{(r-i+1)\Theta}&=(1+\beta z_i^{-1})^{-(r-i+1)}\psi(z_i),\\
B_i&:=e^{-(r-i+1)\Theta }\psi^\ast(w_i)e^{(r-i+1)\Theta}&=(1+\beta w_i)^{r-i+1}\psi^\ast(w_i),\\
X&:=e^{-r\Theta}e^{H(x)}e^{r\Theta}&=\textstyle \prod_{l=1}^\infty(1+\beta x_l)^re^{H(x)},
\end{array}
\]
the $\Phi$ is simply rewritten as
\[
\bra{-r}B_r\dots B_2B_1XA_1A_2\dots A_r\ket{-r}.
\]
Since $e^{H(x)}\ket{-r}=\ket{-r}$, this expression equals to
\begin{align}
\textstyle \prod_{l=1}^\infty(1+\beta x_l)^r\bra{-r}B_r\dots B_2B_1
(e^{H(x)}A_1e^{-H(x)})
\dots 
(e^{H(x)}A_re^{-H(x)})\ket{-r}.\label{eq:Phi_expand}
\end{align}
Further, by Wick's theorem (Theorem \ref{thm:Wick}), \eqref{eq:Phi_expand} is rewritten as
\begin{align}\label{eq:Phi_expand2}
\textstyle \prod_{i=1}^\infty(1+\beta x_i)^r
\det (\bra{-r}B_je^{H(x)}A_ie^{-H(x)}\ket{-r})_{1\leq i,j\leq r}.
\end{align}
By using the relation
\begin{align*}
e^{H(x)}A_ie^{-H(x)}
&=e^{H(x)}e^{-(r-i+1)\Theta }\psi(z_i)e^{(r-i+1)\Theta}e^{-H(x)}\\
&\stackrel{\mbox{\hspace{-.75em}\hbox to 0pt{\scriptsize \eqref{eq:e_theta_action} }}}{=}
\hspace{0em}
(1+\beta z_i)^{-(r-i)}e^{H(x)}e^{-\Theta }\psi(z_i)e^{\Theta}e^{-H(x)}\\
&
\stackrel{\mbox{\hspace{-3em}\hbox to 0pt{\scriptsize (Cor.\ref{cor:Grothen}) }}}{=}
\hspace{1em}
\textstyle \prod_{l=1}^\infty(1+\beta x_l)^{-1}\cdot(1+\beta z_i^{-1})^{-(r-i)} \mathcal{G}(z_i)\psi(z_i),
\end{align*}
we can cancel out the factor $\prod_{l=1}^\infty(1+\beta x_l)^r$ in (\ref{eq:Phi_expand2}) and obtain 
\begin{align}
\Phi
&=
\det \left( (1+\beta w_j)^{r-j+1}(1+\beta z_i^{-1})^{-(r-i)}\mathcal{G}(z_i)\cdot \bra{-r}\psi^\ast(w_j)\psi(z_i)\ket{-r} \right)_{1\leq i,j\leq r}\nonumber\\
&=
\det \left( (1+\beta w_j)^{r-j+1}(1+\beta z_i^{-1})^{-(r-i)}\mathcal{G}(z_i)
\sum_{p=-r}^{\infty}{z_i^pw_j^p} \right)_{1\leq i,j\leq r}.\label{eq:Psi-determinant}
\end{align}
\begin{prop}\label{prop:det_G//}
We have
\[
G_{\lambda\slsl \mu}(x)=
\det 
\left( 
\sum_{n=0}^\infty
\left\{
{i-j+1\choose n}
-
\Delta_{i,j}(n)
\right\}
\beta^n
 G_{\lambda_i-\mu_j-i+j+n}(x)
\right)_{1\leq i,j\leq r},
\]
where
\[
\Delta_{i,j}(n)
=
\begin{cases}
{i-r\choose -1-i+j+n}, & \mu_j=0,\\
0, & \mu_j> 0.
\end{cases}
\]
\end{prop}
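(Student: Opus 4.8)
The plan is to read off the coefficient of $z_1^{\lambda_1-1}\cdots z_r^{\lambda_r-r}w_1^{\mu_1-1}\cdots w_r^{\mu_r-r}$ directly from the closed determinantal form \eqref{eq:Psi-determinant} of $\Phi$. Write $M_{ij}$ for the $(i,j)$ entry of that determinant, and observe the structural fact that $z_i$ occurs only in the $i$-th row while $w_j$ occurs only in the $j$-th column. Expanding $\det(M_{ij})=\sum_\sigma\operatorname{sgn}(\sigma)\prod_i M_{i,\sigma(i)}$, the variable $z_i$ lives in the single factor $M_{i,\sigma(i)}$ and $w_j$ in the single factor $M_{\sigma^{-1}(j),j}$; since the $2r$ variables are algebraically independent, extraction of the monomial factorizes term-by-term over the product. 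Hence $G_{\lambda\slsl\mu}(x)=\det(\widetilde{M}_{ij})_{1\le i,j\le r}$, where $\widetilde{M}_{ij}$ is the coefficient of $z_i^{\lambda_i-i}w_j^{\mu_j-j}$ in $M_{ij}$. This reduces the proposition to the computation of a single scalar entry.

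To find $\widetilde{M}_{ij}$, I would expand each factor of $M_{ij}=(1+\beta w_j)^{r-j+1}(1+\beta z_i^{-1})^{-(r-i)}\mathcal{G}(z_i)\sum_{p\ge -r}(z_iw_j)^p$ as a series: the two binomial series for $(1+\beta w_j)^{r-j+1}$ and $(1+\beta z_i^{-1})^{-(r-i)}$, together with $\mathcal{G}(z_i)=\sum_n G_n(x)z_i^n$ from Corollary \ref{cor:generating_fn_G}. Matching the exponent of $w_j$ to $\mu_j-j$ ties the geometric index $p$ to the $w$-binomial index, and then matching the exponent of $z_i$ to $\lambda_i-i$ fixes the remaining index of $G$. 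Collecting by the total power $\beta^n$ yields $\widetilde{M}_{ij}=\sum_{n\ge 0}\beta^n\,G_{\lambda_i-\mu_j-i+j+n}(x)\,S_{ij}(n)$, where the index shift $\lambda_i-\mu_j-i+j$ is precisely the one in the statement and $S_{ij}(n)=\sum_s\binom{r-j+1}{s}\binom{-(r-i)}{n-s}$ is a convolution of the two binomial coefficients.

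The decisive step is to evaluate $S_{ij}(n)$ by the Chu--Vandermonde identity, which gives $\binom{(r-j+1)-(r-i)}{n}=\binom{i-j+1}{n}$ when the sum over $s$ runs over its full natural range. The one delicate point, and the \emph{main obstacle}, is that the lower bound $p\ge -r$ of the geometric series $\sum_{p\ge -r}(z_iw_j)^p$ caps $s$ from above at $\mu_j-j+r$. When $\mu_j>0$ this cap is at least $r-j+1$, hence beyond the support of $\binom{r-j+1}{s}$, so the full Vandermonde value $\binom{i-j+1}{n}$ survives and $\Delta_{i,j}(n)=0$. When $\mu_j=0$ the cap drops to $r-j$, which deletes exactly the top term $s=r-j+1$ of the convolution; subtracting this single lost term $\binom{r-j+1}{r-j+1}\binom{-(r-i)}{n-(r-j+1)}$ produces the correction $\Delta_{i,j}(n)$. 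Once this boundary bookkeeping is settled, reassembling $\det(\widetilde{M}_{ij})$ completes the proof; the only real care needed is in tracking which end of the summation range is truncated when $\mu_j=0$.
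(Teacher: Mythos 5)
Your proposal follows exactly the paper's own route: the coefficient extraction is pushed inside the determinant of \eqref{eq:Psi-determinant} (legitimate since $z_i$ occurs only in row $i$ and $w_j$ only in column $j$), the entry-wise coefficient is the convolution $\sum_{k=0}^{\mu_j-j+r}\binom{r-j+1}{k}\binom{i-r}{n-k}$, and this truncated sum is evaluated by Chu--Vandermonde with a correction term when the cap $\mu_j-j+r$ actually bites, i.e.\ precisely when $\mu_j=0$, in which case the single term $k=r-j+1$ is lost. The paper does the very same thing; its proof records the same intermediate determinant and then asserts the resulting binomial identity ``by direct calculations.''

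There is, however, a discrepancy you passed over: the term you (correctly) identify as deleted is $\binom{r-j+1}{r-j+1}\binom{i-r}{n-(r-j+1)}=\binom{i-r}{-1-r+j+n}$, whereas the proposition's $\Delta_{i,j}(n)$ is $\binom{i-r}{-1-i+j+n}$ --- lower index $-1-r+j+n$ versus $-1-i+j+n$. These agree only when $i=r$, yet you claim without comment that your lost term ``produces the correction $\Delta_{i,j}(n)$.'' In fact your version is the correct one. For $r=2$, $i=j=1$, $\mu_1=0$, $n=1$, the truncated sum is $\binom{2}{0}\binom{-1}{1}+\binom{2}{1}\binom{-1}{0}=1$, which equals $\binom{1}{1}-\binom{-1}{-1}=1$ (your formula) but not $\binom{1}{1}-\binom{-1}{0}=0$ (the printed $\Delta$). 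One can also test the determinant itself: for $\lambda=(1,1)$, $\mu=\emptyset$, $r=2$, at $\beta=0$ the printed formula yields $h_1^2$, while $G_{(1,1)\slsl\emptyset}=G_{(1,1)}$ reduces to $s_{(1,1)}=h_1^2-h_2$ at $\beta=0$; your formula gives the latter. So your derivation is sound and in effect exposes a typo in the statement (an $i$ where an $r$ should stand in the lower index of $\Delta_{i,j}(n)$); the one flaw in your write-up is that you should have flagged this mismatch and stated the corrected $\Delta$ explicitly, rather than asserting agreement with the printed one.
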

\begin{proof}
By comparing the coefficients of $z_1^{\lambda_1-1}\dots z_r^{\lambda_r-r}w_1^{\mu_1-1}\dots w_r^{\mu_r-r}$ in (\ref{eq:Psi-determinant}), we obtain
\[
G_{\lambda\slsl \mu}(x)=
\det \left( \sum_{n=0}^\infty
\sum_{k=0}^{\mu_j-j+r}
{r-j+1\choose k}
{i-r\choose n-k}
\beta^n
 G_{\lambda_i-\mu_j-i+j+n}(x)
\right)_{1\leq i,j\leq r}.
\]
Therefore, to conclude the proof, it suffices to prove
\[
\sum_{k=0}^{\mu_j-j+r}
{r-j+1\choose k}
{i-r\choose n-k}
=
{i-j+1\choose n}
-
\Delta_{i,j}(n),
\]
which is given by direct calculations.
\end{proof}

\subsection{Determinantal formula for $G_{\lambda/\mu}(x)$}

The same calculations as in the previous section are also valid for $G_{\lambda/\mu}(x)$.
Let 
\begin{align*}
&\overline{\Phi}=\overline{\Phi}(z_1,\dots,z_r,w_1,\dots,w_r)\\
&:=
\bra{-r}e^{-\Theta}\psi^\ast(w_r)\dots e^{-\Theta}\psi^\ast(w_1)
e^{-\Theta}e^{H(x)}
\psi(z_1)e^\Theta  \dots \psi(z_r)e^\Theta\ket{-r},
\end{align*}
be a generating function of the skew stable Grothendieck polynomials, in which the coefficient of $z_1^{\lambda_1-1}\dots z_r^{\lambda_r-r}w_1^{\mu_1-1}\dots w_r^{\mu_r-r}$ is $G_{\lambda/\mu}(x)$.
By putting $$\overline{B}_i:=e^{-(r-i)\Theta }\psi^\ast(w_i)e^{(r-i)\Theta}=(1+\beta w_i)^{r-i}\psi^\ast(w_i),$$ we have
\begin{align}
&\overline{\Phi}=\bra{-r}\overline{B}_r\dots \overline{B}_2\overline{B}_1XA_1A_2\dots A_r\ket{-r}.
\end{align}
Using the same calculations as in \S \ref{sec:det_G//}, we obtain the determinantal expression
\begin{align*}
\overline{\Phi}
=
\det \left( (1+\beta w_j)^{r-j}(1+\beta z_i^{-1})^{-(r-i)}\mathcal{G}(z_i)
\sum_{p=-r}^{\infty}{z_i^pw_j^p} \right)_{1\leq i,j\leq r}.
\end{align*}
Comparing the coefficients of $z_1^{\lambda_1-1}\dots z_r^{\lambda_r-r}w_1^{\mu_1-1}\dots w_r^{\mu_r-r}$, we obtain:
\begin{prop}\label{prop:det_G/}
We have
\[
G_{\lambda/ \mu}(x)=
\det \left( \sum_{n=0}^\infty
{i-j\choose n}
\beta^n
 G_{\lambda_i-\mu_j-i+j+n}(x)
\right)_{1\leq i,j\leq r}.
\]
\end{prop}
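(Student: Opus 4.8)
The plan is to follow exactly the template established for $G_{\lambda\slsl\mu}(x)$ in Section \ref{sec:det_G//}, adapting the bookkeeping to reflect the single change in the setup: the generating function $\overline{\Phi}$ carries one extra factor $e^{-\Theta}$ sitting immediately to the left of $e^{H(x)}$, as compared with $\Phi$ in \eqref{eq:Phi_expectation_value}. The starting point is the free-fermionic presentation $G_{\lambda/\mu}(x) = {}^g\bra{\mu}e^{-\Theta}e^{H(x)}\ket{\lambda}^G$ from Theorem \ref{thm:G/}, which guarantees that $\overline{\Phi}$ genuinely is a generating function whose coefficient of $z_1^{\lambda_1-1}\cdots z_r^{\lambda_r-r}w_1^{\mu_1-1}\cdots w_r^{\mu_r-r}$ equals $G_{\lambda/\mu}(x)$.

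The computation itself has already been carried out in the excerpt: conjugating by $e^{\pm(r-i)\Theta}$ absorbs the extra $e^{-\Theta}$ into the redefined operators $\overline{B}_i = (1+\beta w_i)^{r-i}\psi^\ast(w_i)$, so the exponent of $(1+\beta w_j)$ drops from $r-j+1$ (as in $B_j$) to $r-j$. Running Wick's theorem (Theorem \ref{thm:Wick}) and Corollary \ref{cor:Grothen} then produces the determinantal expression for $\overline{\Phi}$ displayed just before the statement, with $(1+\beta w_j)^{r-j}$ in place of $(1+\beta w_j)^{r-j+1}$. So the one substantive task remaining is to extract coefficients: I would expand $(1+\beta w_j)^{r-j}$, $(1+\beta z_i^{-1})^{-(r-i)}$, and $\mathcal{G}(z_i)=\sum_n G_n(x)z_i^n$ as power series, multiply, and read off the coefficient of $z_i^{\lambda_i-i}w_j^{\mu_j-j}$ inside each $(i,j)$ matrix entry.

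The key simplification to aim for is the binomial collapse. After collecting powers, each entry will be a double sum of the shape $\sum_{n}\sum_{k}\binom{r-j}{k}\binom{i-r}{n-k}\beta^n G_{\lambda_i-\mu_j-i+j+n}(x)$, where $k$ runs over the relevant range dictated by the $w_j$-power. The heart of the argument is then the Vandermonde-type identity $\sum_{k}\binom{r-j}{k}\binom{i-r}{n-k} = \binom{i-j}{n}$; I would verify that, unlike the $G_{\lambda\slsl\mu}$ case, the summation range over $k$ here is unobstructed (it effectively runs over all of $\ZZ$ because the constraint coming from $\mu_j$ now matches the shifted exponent exactly), so no correction term $\Delta_{i,j}(n)$ survives. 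This is the one place demanding genuine care, and it is precisely where Proposition \ref{prop:det_G//} and the present proposition diverge: the absence of the extra $(1+\beta w_j)$ factor is exactly what removes the boundary defect.

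\textbf{The main obstacle} I anticipate is bookkeeping the finite-versus-infinite summation ranges in the Vandermonde step and confirming that the $\Delta_{i,j}$ correction truly vanishes. In the $G_{\lambda\slsl\mu}$ calculation the upper limit $k \le \mu_j-j+r$ cut the Chu--Vandermonde sum short when $\mu_j=0$, forcing the correction term; here the reduced exponent $r-j$ shifts that boundary so that the sum closes to the clean binomial coefficient $\binom{i-j}{n}$ for every value of $\mu_j$, including $\mu_j=0$. Once that identity is checked, the determinant reads off immediately as the claimed formula, so I would phrase the proof as ``by the same calculations as in Section \ref{sec:det_G//}, replacing $r-j+1$ by $r-j$ throughout; comparing coefficients and applying Chu--Vandermonde gives the result,'' leaving the single binomial identity as the only explicit verification.
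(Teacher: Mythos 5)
Your proposal is correct and follows essentially the same route as the paper: the paper likewise passes from Theorem \ref{thm:G/} to the generating function $\overline{\Phi}$, absorbs the extra $e^{-\Theta}$ into the operators $\overline{B}_i=(1+\beta w_i)^{r-i}\psi^\ast(w_i)$, applies Wick's theorem and Corollary \ref{cor:Grothen}, and then concludes by the identity $\sum_{k=0}^{\mu_j-j+r}\binom{r-j}{k}\binom{i-r}{n-k}=\binom{i-j}{n}$. Your explanation of why no correction term $\Delta_{i,j}(n)$ survives --- the cutoff $k\leq \mu_j-j+r$ never truncates the support of $\binom{r-j}{k}$, even when $\mu_j=0$ --- is precisely the content of that identity, which the paper leaves as a ``direct calculation.''
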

\begin{proof}
It is given by direct calculations using the equation
\[
\sum_{k=0}^{\mu_j-j+r}
{r-j\choose k}
{i-r\choose n-k}
=
{i-j\choose n}.
\]
\end{proof}

\subsection{Determinantal formula for $G_{\lambda\slsl \mu}(x_1,\dots,x_m)$}\label{sec:det_G//_finite}

We have another determinantal formula which describes skew Grothendieck polynomials in finite variables.
We put
\begin{align*}
&G_{\lambda\slsl\mu}(x_1,\dots,x_m)=G_{\lambda\slsl\mu}(x_1,\dots,x_m,0,0,\dots),\\
&G_{\lambda/\mu}(x_1,\dots,x_m)=G_{\lambda/\mu}(x_1,\dots,x_m,0,0,\dots).
\end{align*}

\begin{lemma}\label{lemma:adjoint_distance}
Let $f(x)\in \widehat{\Lambda}$.
Then $f(x)\in I_n$ if and only if $s_\lambda^\perp f(x)\in I_{n-\ell(\lambda)}$.
\end{lemma}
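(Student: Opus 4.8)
The plan is to reduce the statement to the orthogonality characterization \eqref{eq:M_n(2)} and then push it through the adjunction that defines $s_\lambda^\perp$. Before starting I would fix the meaning of $s_\lambda^\perp$ on the completion: since every element of $\widehat{\Lambda}$ is determined by its pairings against all Schur functions under the continuous form $\langle\,\cdot\,,\cdot\,\rangle\colon\widehat{\Lambda}\times\Lambda\to k$ constructed above, I set $\langle s_\lambda^\perp f,\,s_\nu\rangle:=\langle f,\,s_\lambda s_\nu\rangle$ for every partition $\nu$; this extends the operator defined above on $\Lambda$ and is the only choice compatible with $\langle f^\perp g,h\rangle=\langle g,fh\rangle$. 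With this convention, \eqref{eq:M_n(2)} says that $f\in I_n$ amounts to $\langle f,s_\mu\rangle=0$ for all $\ell(\mu)\le n$, whereas $s_\lambda^\perp f\in I_{n-\ell(\lambda)}$ amounts to $\langle f,s_\lambda s_\nu\rangle=0$ for all $\ell(\nu)\le n-\ell(\lambda)$.

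For the implication $f\in I_n\Rightarrow s_\lambda^\perp f\in I_{n-\ell(\lambda)}$ I would expand $s_\lambda s_\nu=\sum_\mu c^\mu_{\lambda\nu}s_\mu$ into Schur functions and invoke the standard length estimate for Littlewood--Richardson coefficients: $c^\mu_{\lambda\nu}\neq 0$ forces $\ell(\mu)\le\ell(\lambda)+\ell(\nu)$. Consequently, when $\ell(\nu)\le n-\ell(\lambda)$ every $\mu$ occurring in $s_\lambda s_\nu$ satisfies $\ell(\mu)\le n$, so each term of $\langle f,s_\lambda s_\nu\rangle=\sum_\mu c^\mu_{\lambda\nu}\langle f,s_\mu\rangle$ vanishes by hypothesis. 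This yields $\langle s_\lambda^\perp f,s_\nu\rangle=0$ for all such $\nu$, i.e. $s_\lambda^\perp f\in I_{n-\ell(\lambda)}$. The only non-formal input is the length bound, which I would either cite or re-derive from the fact that a Littlewood--Richardson filling of $\mu/\lambda$ of content $\nu$ cannot occupy more than $\ell(\lambda)+\ell(\nu)$ rows.

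I expect the converse to be the crux of the matter. Its cleanest reading is with $\lambda$ quantified over all partitions: then $f\in I_n$ follows from $s_\lambda^\perp f\in I_{n-\ell(\lambda)}$ simply by specializing to $\lambda=\varnothing$, so that the whole content of the lemma is the forward bound above. For a fixed nonempty $\lambda$ the converse is more delicate, since $s_\lambda^\perp$ annihilates every component $s_\mu$ with $\mu\not\supseteq\lambda$; one must therefore either restrict to functions $f$ whose Schur support lies in $\{\mu:\mu\supseteq\lambda\}$ (which is the situation in the intended applications to remainder terms) or argue directly through \eqref{eq:M_n(1)} by comparing $\pi_n(f)$ with $\pi_{n-\ell(\lambda)}(s_\lambda^\perp f)$ in finitely many variables. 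Verifying that this comparison loses no information, equivalently that the skewing does not collapse the relevant short components of $f$, is the step I anticipate will demand the most care.
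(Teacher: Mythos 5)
Your argument for the substantive (forward) implication is, step for step, the paper's own proof: the paper deduces Lemma \ref{lemma:adjoint_distance} from \eqref{eq:M_n(2)} via the chain ``$f\in I_n$ iff $\langle f,s_\mu\rangle=0$ for all $\ell(\mu)\le n$, iff $\langle f,s_\lambda s_\mu\rangle=0$ for all $\ell(\lambda)+\ell(\mu)\le n$, iff $\langle s_\lambda^\perp f,s_\mu\rangle=0$ for all $\ell(\mu)\le n-\ell(\lambda)$, iff $s_\lambda^\perp f\in I_{n-\ell(\lambda)}$,'' and the second equivalence rests on exactly the Littlewood--Richardson length bound you invoke (the paper leaves it implicit). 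Where you part ways is the converse, and there your suspicion is vindicated rather than being a gap on your side: for a fixed nonempty $\lambda$ the ``if'' direction is simply false --- take $f=1$, $\lambda=(1)$, $n=1$; then $s_\lambda^\perp f=0\in I_{0}$ while $f\notin I_{1}$ --- and the paper's chain breaks precisely where you predicted, since recovering ``$\langle f,s_\mu\rangle=0$ for all $\ell(\mu)\le n$'' from the statement about products $\langle f,s_\lambda s_\mu\rangle$ forces one to let $\lambda$ vary (in effect to take $\lambda=\emptyset$), which is incompatible with $\lambda$ being the fixed partition of the lemma. So the lemma holds only under your quantified reading (over all $\lambda$), where the converse is trivial; and this costs nothing, because the paper only ever uses the forward implication (in Corollary \ref{cor:g_adjoint_distance}, in the truncation formula \eqref{eq:skew_truncation}, and in Proposition \ref{prop:product_sG//}). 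The one improvement to your write-up: rather than leaving the fixed-$\lambda$ converse as a ``delicate'' step that demands care, note that it is settled in the negative by the two-line counterexample above, so no restriction on the Schur support of $f$ or comparison of truncations can rescue it.
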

\begin{proof}
From (\ref{eq:M_n(2)}), it follows that
$f(x)\in I_n$
$\iff$
$\langle f,s_\mu\rangle=0$ for all $\ell(\mu)\leq n$
$\iff$
$\langle f,s_\lambda s_\mu\rangle=0$ for all $\ell(\lambda)+\ell(\mu)\leq n$
$\iff$
$\langle s_\lambda^\perp f,s_\mu\rangle=0$ for all $\ell(\mu)\leq n-\ell(\lambda)$
$\iff$
$s_\lambda^\perp f(x)\in I_{n-\ell(\lambda)}$.
\end{proof}
\begin{cor}\label{cor:g_adjoint_distance}
$f(x)\in I_n$ if and only if $g_\lambda^\perp f(x)\in I_{n-\ell(\lambda)}$.
\end{cor}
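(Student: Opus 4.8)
The plan is to reduce the statement to Lemma~\ref{lemma:adjoint_distance} by using the triangularity of the change of basis between the dual stable Grothendieck polynomials and the Schur functions. Recall that $g_\lambda$ has a Schur expansion $g_\lambda=s_\lambda+\sum_{\mu\subsetneq\lambda}a_{\lambda\mu}s_\mu$ \cite{lam2007combinatorial}, in which every partition $\mu$ that occurs satisfies $\mu\subseteq\lambda$, and in particular $\ell(\mu)\leq\ell(\lambda)$. This expansion is unitriangular for the containment order, so it inverts to $s_\lambda=g_\lambda+\sum_{\mu\subsetneq\lambda}b_{\lambda\mu}g_\mu$ with the same length bound. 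Dualizing both expansions gives $g_\lambda^\perp=s_\lambda^\perp+\sum_{\mu\subsetneq\lambda}a_{\lambda\mu}s_\mu^\perp$ and $s_\lambda^\perp=g_\lambda^\perp+\sum_{\mu\subsetneq\lambda}b_{\lambda\mu}g_\mu^\perp$, so that the whole problem becomes a comparison of how the two operators $g_\lambda^\perp$ and $s_\lambda^\perp$ shift the filtration $\{I_m\}_m$.

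First I would dispose of the forward implication. If $f(x)\in I_n$, then Lemma~\ref{lemma:adjoint_distance}, applied to each $\mu\subseteq\lambda$, gives $s_\mu^\perp f\in I_{n-\ell(\mu)}$; since $\ell(\mu)\leq\ell(\lambda)$ we have $I_{n-\ell(\mu)}\subseteq I_{n-\ell(\lambda)}$, so every summand of $g_\lambda^\perp f=s_\lambda^\perp f+\sum_{\mu\subsetneq\lambda}a_{\lambda\mu}s_\mu^\perp f$ lies in $I_{n-\ell(\lambda)}$, and hence so does $g_\lambda^\perp f$. This direction uses only the inclusion half of Lemma~\ref{lemma:adjoint_distance} together with the length bound, and needs no induction.

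For the converse I would induct on $\lambda$ along the containment order and use the inverse expansion $s_\lambda^\perp f=g_\lambda^\perp f+\sum_{\mu\subsetneq\lambda}b_{\lambda\mu}g_\mu^\perp f$: granting that each correction term $g_\mu^\perp f$ with $\mu\subsetneq\lambda$ lies in $I_{n-\ell(\lambda)}$, the hypothesis $g_\lambda^\perp f\in I_{n-\ell(\lambda)}$ would force $s_\lambda^\perp f\in I_{n-\ell(\lambda)}$, and then $f\in I_n$ by the nontrivial direction of Lemma~\ref{lemma:adjoint_distance}. The hard part will be exactly the control of these correction terms. The summands indexed by $\mu$ with $\ell(\mu)=\ell(\lambda)$ sit at the same filtration level $n-\ell(\lambda)$ as the principal term $s_\lambda^\perp f$, so a bare length count cannot separate them and the induction does not close on its own. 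To overcome this I would abandon the termwise estimate and return to the characterization~\eqref{eq:M_n(2)}, reducing the claim to the equivalence of the two systems of orthogonality relations ``$\langle f,s_\nu\rangle=0$ for all $\ell(\nu)\leq n$'' and ``$\langle f,g_\lambda s_\mu\rangle=0$ for all $\ell(\mu)\leq n-\ell(\lambda)$''; the delicate point is to show that the products $g_\lambda s_\mu$ already detect every Schur function of length at most $n$, which is where the unitriangularity of the $g$-to-$s$ matrix must be fed into the length estimate rather than applied one term at a time.
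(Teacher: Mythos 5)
Your forward implication is correct and is essentially the argument the paper intends (the corollary is stated without a written proof): expand $g_\lambda$ as a finite linear combination of Schur functions $s_\mu$ with $\mu\subseteq\lambda$, hence $\ell(\mu)\leq\ell(\lambda)$, apply the forward half of Lemma~\ref{lemma:adjoint_distance} to each term, and use $I_{n-\ell(\mu)}\subseteq I_{n-\ell(\lambda)}$. Note that unitriangularity plays no role here; only the length bound on the partitions appearing in the expansion does.

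The converse, as you have set it up, is a genuine gap, and it cannot be repaired, because for a \emph{fixed} partition $\lambda$ the implication $g_\lambda^\perp f\in I_{n-\ell(\lambda)}\Rightarrow f\in I_n$ is false. Take $n=1$, $\lambda=(1)$ (so that $g_{(1)}=s_{(1)}$, as one checks from the expansion of $g_\lambda$ quoted at the end of Section~\ref{sec:skewdual}) and $f=s_{(3)}+s_{(1,1)}$. Then $g_{(1)}^\perp f=s_{(2)}+s_{(1)}$ lies in $I_0$, since it pairs to zero with $s_\emptyset=1$, but $f\notin I_1$ because $\langle f,s_{(3)}\rangle=1$. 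The same example refutes the fixed-$\lambda$ converse of Lemma~\ref{lemma:adjoint_distance} itself, and it is exactly the ``delicate point'' you postponed: for fixed $\lambda$ the functionals $\langle\,\cdot\,,g_\lambda s_\mu\rangle$ with $\ell(\mu)\leq n-\ell(\lambda)$ do \emph{not} detect every $s_\nu$ with $\ell(\nu)\leq n$, so no spanning argument of the kind you sketch can close the induction. The statement has to be read the way the chain of equivalences in the paper's proof of Lemma~\ref{lemma:adjoint_distance} actually runs: $\lambda$ is universally quantified, i.e.\ $f\in I_n$ if and only if $g_\lambda^\perp f\in I_{n-\ell(\lambda)}$ holds for \emph{every} partition $\lambda$. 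Under that reading the converse is immediate by specializing to $\lambda=\emptyset$, where $g_\emptyset=1$ and $g_\emptyset^\perp f=f$; no induction on the containment order and no inversion of the transition matrix is needed. This is also consistent with how the corollary is used in the paper (for instance in the proof of \eqref{eq:skew_truncation}), where only the forward implication is ever invoked.
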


\begin{prop}
Let $G_\lambda^r(x)$ be the $r$-th truncation of $G_\lambda(x)$. 
For a partition $\mu$ and an integer $m$ with $m\leq r-\ell(\mu)$, we have
\begin{equation}\label{eq:skew_truncation}
G_{\lambda\slsl\mu}(x_1,\dots,x_m)=g_\mu^\perp G_\lambda^r(x_1,\dots,x_m),
\end{equation}
where $g_\mu^\perp G_\lambda^r(x_1,\dots,x_m)=g_\mu^\perp G_\lambda^r(x_1,\dots,x_m,0,0,\dots)$.
\end{prop}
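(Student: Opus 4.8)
The plan is to transfer the identity into the completed ring $\widehat{\Lambda}$ and reduce it to a single containment in the filtration $\{I_n\}$. Recall from \eqref{eq:def_of_G//} and the line following it that $G_{\lambda\slsl\mu}(x)=g_\mu^\perp G_\lambda(x)$, and that (by the conventions fixed at the start of \S\ref{sec:det_G//_finite}, and in the statement itself) the symbol $f(x_1,\dots,x_m)$ abbreviates $\pi_m(f)$. Thus the left-hand side of \eqref{eq:skew_truncation} is $\pi_m\bigl(g_\mu^\perp G_\lambda(x)\bigr)$, while the right-hand side is $\pi_m\bigl(g_\mu^\perp G_\lambda^r(x)\bigr)$. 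Since $g_\mu^\perp$ and $\pi_m$ are both $k$-linear, the difference of the two sides equals $\pi_m\bigl(g_\mu^\perp(G_\lambda(x)-G_\lambda^r(x))\bigr)$; by the characterization \eqref{eq:M_n(1)}, this vanishes precisely when $g_\mu^\perp(G_\lambda(x)-G_\lambda^r(x))\in I_m$. So it suffices to establish this containment.

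First I would locate the remainder $G_\lambda(x)-G_\lambda^r(x)$ in the filtration. By definition $G_\lambda^r(x)=\iota_r\circ\pi_r(G_\lambda(x))$, so $G_\lambda(x)-G_\lambda^r(x)=(1-\iota_r\circ\pi_r)(G_\lambda(x))$, which lies in $I_r$ because $\mathrm{Im}(1-\iota_r\circ\pi_r)\subset I_r$. Note that $G_\lambda^r(x)\in\mathrm{Im}(\iota_r)\subset\Lambda$ is a genuine symmetric function, so $g_\mu^\perp G_\lambda^r(x)$ is the ordinary adjoint action, whereas on $G_\lambda(x)\in\widehat{\Lambda}$ we use the extension of $g_\mu^\perp$ to $\widehat{\Lambda}$ for which Corollary \ref{cor:g_adjoint_distance} is stated.

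The key step is then to apply Corollary \ref{cor:g_adjoint_distance}, which says that $g_\mu^\perp$ lowers the filtration index by exactly $\ell(\mu)$: from $G_\lambda(x)-G_\lambda^r(x)\in I_r$ we obtain $g_\mu^\perp(G_\lambda(x)-G_\lambda^r(x))\in I_{r-\ell(\mu)}$. Finally, the hypothesis $m\leq r-\ell(\mu)$, combined with the fact that the $I_n$ are decreasing so that $I_{r-\ell(\mu)}\subseteq I_m$, yields the required containment $g_\mu^\perp(G_\lambda(x)-G_\lambda^r(x))\in I_m$, and hence $\pi_m$ annihilates it.

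I do not expect a serious obstacle: once the identity is rephrased as $\pi_m\bigl(g_\mu^\perp(G_\lambda-G_\lambda^r)\bigr)=0$, everything follows from the two structural facts $\mathrm{Im}(1-\iota_r\circ\pi_r)\subset I_r$ and Corollary \ref{cor:g_adjoint_distance}. The only point needing care is the bookkeeping of filtration indices, and the role of the hypothesis $m\leq r-\ell(\mu)$ is precisely to guarantee that the remainder $G_\lambda-G_\lambda^r$, after being acted on by $g_\mu^\perp$, remains deep enough in the filtration to be killed by the truncation $\pi_m$.
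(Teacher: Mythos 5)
Your proof is correct and is essentially the paper's own argument: you place $G_\lambda(x)-G_\lambda^r(x)$ in $I_r$ via $\mathrm{Im}(1-\iota_r\circ\pi_r)\subset I_r$, apply Corollary \ref{cor:g_adjoint_distance} to conclude $g_\mu^\perp\bigl(G_\lambda(x)-G_\lambda^r(x)\bigr)\in I_{r-\ell(\mu)}$, and then invoke (\ref{eq:M_n(1)}) to see that the truncation to $m\leq r-\ell(\mu)$ variables annihilates the difference. The only differences are presentational: you make explicit the bookkeeping the paper leaves implicit, namely that the $I_n$ are decreasing and that $g_\mu^\perp$ acts on $G_\lambda(x)\in\widehat{\Lambda}$ through its continuous extension.
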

\begin{proof}
As $\mathrm{Im}(1-\iota_r\circ \pi_r)\subset I_r$, we have $G_\lambda(x)-G_\lambda^r(x)\in I_r$. 
By Corollary \ref{cor:g_adjoint_distance}, it follows that $g_\mu^\perp G_\lambda(x)-g_\mu^\perp G_\lambda^r(x)\in I_{r-\ell(\mu)}$.
Therefore, $G_{\lambda\slsl\mu}(x_1,\dots,x_{r-\ell(\mu)})=g_\mu^\perp G_\lambda^r(x_1,\dots,x_{r-\ell(\mu)})$.
See (\ref{eq:M_n(1)}).
\end{proof}

Consider the formal series
\begin{align*}
&\Psi=\Psi(z_1,\dots,z_r,w_1,\dots,w_r)\nonumber\\
&:=
\bra{-r}e^{-\Theta}\psi^\ast(w_r)\dots e^{-\Theta}\psi^\ast(w_1)
e^{H(x)}
\psi(z_1)e^\Theta  \dots \psi(z_r)e^\Theta\cdot e^{-r\Theta}\ket{-r},
\end{align*}
in which the coefficient of $z_1^{\lambda_1-1}\dots z_r^{\lambda_r-r}w_1^{\mu_1-1}\dots w_r^{\mu_r-r}$ is $g_\mu^\perp G_{\lambda}^r(x)$.
By letting
\[
\begin{array}{lll}
C_i&:=e^{(i-1)\Theta }\psi(z_i)e^{-(i-1)\Theta}&=(1+\beta z_i^{-1})^{i-1}\psi(z_i),\\
D_i&:=e^{(i-1)\Theta }\psi^\ast(w_i)e^{-(i-1)\Theta}&=(1+\beta w_i)^{-(i-1)}\psi^\ast(w_i),
\end{array}
\]
we obtain
\begin{align*}
\Psi
&=\bra{-r}D_r\dots D_2D_1e^{H(x)}C_1C_2\dots C_r\ket{-r}\nonumber\\
&=\bra{-r}D_r\dots D_2D_1
(e^{H(x)}C_1e^{-H(x)})
\dots 
(e^{H(x)}C_re^{-H(x)})\ket{-r}\\
&\hspace{-1.3em}
\stackrel{
\mbox{\scriptsize (Thm.\ref{thm:Wick})}
}{=}
\det (\bra{-r}D_je^{H(x)}C_ie^{-H(x)}\ket{-r})_{1\leq i,j\leq r}.
\end{align*}
From
\begin{align*}
e^{H(x)}C_ie^{-H(x)}
=(1+\beta z_i^{-1})^{i-1}e^{H(x)}\psi(z_i)e^{-H(x)}
=(1+\beta z_i^{-1})^{i-1}\mathcal{H}(z_i)\psi(z_i),
\end{align*}
it follows that
\begin{align*}
\Psi&=
\det \left( (1+\beta w_j)^{-(j-1)}(1+\beta z_i^{-1})^{i-1}\mathcal{H}(z_i)\cdot \bra{-r}\psi^\ast(w_j)\psi(z_i)\ket{-r} \right)_{1\leq i,j\leq r}\\
&=
\det \left( (1+\beta w_j)^{-(j-1)}(1+\beta z_i^{-1})^{i-1}\mathcal{H}(z_i)
\sum_{p=-r}^{\infty}{z_i^pw_j^p} \right)_{1\leq i,j\leq r}.
\end{align*}
Comparing the coefficient of $z_1^{\lambda_1-1}\dots z_r^{\lambda_r-r}w_1^{\mu_1-1}\dots w_r^{\mu_r-r}$, we have:
\begin{prop}\label{prop:det_G//_finite}
We have
\[
g_\mu^\perp G^r_{\lambda}(x)=
\det \left( \sum_{n=0}^\infty
\sum_{k=0}^{\mu_j-j+r}
{1-j\choose k}
{i-1\choose n-k}
\beta^n
 h_{\lambda_i-\mu_j-i+j+n}(x)
\right)_{1\leq i,j\leq r}.
\]
\end{prop}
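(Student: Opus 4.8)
The plan is to read off the claim directly from the determinantal expression for $\Psi$ obtained just above the statement, namely
\[
\Psi=\det\left((1+\beta w_j)^{-(j-1)}(1+\beta z_i^{-1})^{i-1}\mathcal{H}(z_i)\sum_{p=-r}^{\infty}z_i^pw_j^p\right)_{1\leq i,j\leq r},
\]
together with the fact, recorded in the setup, that the coefficient of $z_1^{\lambda_1-1}\cdots z_r^{\lambda_r-r}w_1^{\mu_1-1}\cdots w_r^{\mu_r-r}$ in $\Psi$ equals $g_\mu^\perp G^r_\lambda(x)$. Thus the entire content of the proposition is a single coefficient extraction, and no further fermionic manipulation is needed; by Lemma \ref{lemma:basic_1} \eqref{item:basic_1_4} the factor $\mathcal{H}(z)$ is the generating series $\mathcal{H}(z)=\sum_{m\geq 0}h_m(x)z^m$.

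The first point I would establish is that coefficient extraction commutes with the determinant in this situation. Since $z_i$ occurs only in the entries of row $i$ and $w_j$ only in the entries of column $j$, in the Leibniz expansion $\det M=\sum_\sigma \mathrm{sgn}(\sigma)\prod_i M_{i,\sigma(i)}$ the exponent of $z_i$ is contributed solely by $M_{i,\sigma(i)}$ and the exponent of $w_j$ solely by $M_{\sigma^{-1}(j),j}$. Consequently the coefficient of $\prod_i z_i^{\lambda_i-i}\prod_j w_j^{\mu_j-j}$ in $\det M$ equals $\det(N_{i,j})_{1\leq i,j\leq r}$, where $N_{i,j}$ denotes the coefficient of $z_i^{\lambda_i-i}w_j^{\mu_j-j}$ in the single entry $M_{i,j}$.

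It then remains to compute $N_{i,j}$ by expanding the factors of $M_{i,j}$ as formal (Laurent) series: $(1+\beta w_j)^{-(j-1)}=\sum_{k\geq 0}\binom{1-j}{k}\beta^k w_j^k$, $(1+\beta z_i^{-1})^{i-1}=\sum_{a\geq 0}\binom{i-1}{a}\beta^a z_i^{-a}$, $\mathcal{H}(z_i)=\sum_{m\geq 0}h_m(x)z_i^m$, and the tail $\sum_{p\geq -r}z_i^pw_j^p$. Matching the exponent of $w_j$ forces $k+p=\mu_j-j$, while matching the exponent of $z_i$ forces $m-a+p=\lambda_i-i$. Since the index $p$ is shared between the two matchings, eliminating it gives $m=\lambda_i-\mu_j-i+j+a+k$, so the surviving contribution is $\binom{1-j}{k}\binom{i-1}{a}\beta^{a+k}h_{\lambda_i-\mu_j-i+j+a+k}(x)$. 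Setting $n=a+k$ (and noting that $\binom{i-1}{n-k}$ vanishes whenever $n-k<0$, which enforces $a\geq 0$ automatically) reproduces exactly the summand in the proposition. Finally, the upper limit $k\leq \mu_j-j+r$ of the inner sum is nothing but the lower limit $p\geq -r$ of $\sum_{p=-r}^{\infty}z_i^pw_j^p$ translated through $p=\mu_j-j-k\geq -r$.

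The computation is routine once the interchange of coefficient extraction and determinant is justified; the one step that genuinely requires care — and the place I would flag as the main (albeit modest) obstacle — is the bookkeeping around the shared summation index $p$, so that its lower bound $-r$ is correctly converted into the finite inner range $0\leq k\leq \mu_j-j+r$ rather than an unbounded summation. Everything else reduces to combining the two binomial expansions with the generating series $\mathcal{H}$.
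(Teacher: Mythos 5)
Your proposal is correct and takes essentially the same route as the paper: the paper derives the determinantal expression for $\Psi$ and then states the proposition by ``comparing the coefficients'' of $z_1^{\lambda_1-1}\cdots z_r^{\lambda_r-r}w_1^{\mu_1-1}\cdots w_r^{\mu_r-r}$, which is precisely the coefficient extraction you carry out. Your explicit justification that extraction commutes with the determinant (row/column separation of the variables) and the conversion of the bound $p\geq -r$ into the finite range $0\leq k\leq \mu_j-j+r$ are exactly the details the paper leaves implicit.
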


By letting
\[
H_p^{(i)}(x_1,\dots,x_m):=\sum_{n\geq 0}{i\choose n}
\beta^n h_{p+n}(x_1,\dots,x_m),\quad i\geq 0,
\]
Proposition \ref{prop:det_G//_finite} is simply rewritten as
\begin{align*}
G_{\lambda\slsl \mu}(x_1,\dots,x_m)=\det \left( \sum_{k=0}^{\mu_j-j+r}
{1-j\choose k}
H^{(i-1)}_{\lambda_i-\mu_j-i+j+k}(x_1,\dots,x_m)
\right)_{1\leq i,j\leq r}
\end{align*}
for $m\leq r-\ell(\mu)$.

\subsection{Determinantal formula for $G_{\lambda/\mu}(x_1,\dots,x_m)$}

Consider the formal series
\begin{align}
&\overline{\Psi}=\overline{\Psi}(z_1,\dots,z_r,w_1,\dots,w_r)\nonumber\\
&:=
\bra{-r}e^{-\Theta}\psi^\ast(w_r)\dots e^{-\Theta}\psi^\ast(w_1)
e^{-\Theta}
e^{H(x)}
\psi(z_1)e^\Theta  \dots \psi(z_r)e^\Theta\cdot e^{-r\Theta}\ket{-r},\label{eq:def_of_Psi}
\end{align}
in which the coefficient of $z_1^{\lambda_1-1}\dots z_r^{\lambda_r-r}w_1^{\mu_1-1}\dots w_r^{\mu_r-r}$ is $\sum_{\mu/\sigma \mathrm{\,rook\,strip}} g_\sigma^\perp G_{\lambda}^r(x)$.
By letting $\overline{D}_i:=e^{i\Theta }\psi^\ast(w_i)e^{-i\Theta}=(1+\beta w_i)^{-i}\psi^\ast(w_i)$, \eqref{eq:def_of_Psi} is written as
\begin{align*}
\overline{\Psi}=\bra{-r}\overline{D}_r\dots \overline{D}_2\overline{D}_1
(e^{H(x)}C_1e^{-H(x)})
\dots 
(e^{H(x)}C_re^{-H(x)})\ket{-r}.
\end{align*}
Similar calculations as before give
\begin{align*}
&\overline{\Psi}
=
\det \left( (1+\beta w_j)^{-j}(1+\beta z_i^{-1})^{i-1}\mathcal{H}(z_i)
\sum_{p=-r}^{\infty}{z_i^pw_j^p} \right)_{1\leq i,j\leq r},
\end{align*}
which implies:
\begin{prop}\label{prop:det_G/_finite}
For $m\leq r-\ell(\mu)$, we have
\[
G_{\lambda/\mu}(x_1,\dots,x_m)
=\det \left( \sum_{k=0}^{\mu_j-j+r}
{-j\choose k}
H^{(i-1)}_{\lambda_i-\mu_j-i+j+k}(x_1,\dots,x_m)
\right)_{1\leq i,j\leq r}.
\]
\end{prop}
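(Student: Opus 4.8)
The plan is to read the coefficient of $z_1^{\lambda_1-1}\cdots z_r^{\lambda_r-r}w_1^{\mu_1-1}\cdots w_r^{\mu_r-r}$ off the two presentations of $\overline{\Psi}$: the operator form \eqref{eq:def_of_Psi} on one side, and the determinantal form derived just above the statement (via Wick's theorem, Theorem \ref{thm:Wick}) on the other. Matching the two readings proves the formula, so the work splits into an identification step, namely that the coefficient of \eqref{eq:def_of_Psi} truncates to $G_{\lambda/\mu}(x_1,\dots,x_m)$, and a mechanical extraction step that reads the same coefficient off the determinant.

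For the identification step I would first extract the monomial termwise from \eqref{eq:def_of_Psi}: each $\psi^\ast(w_j)$ contributes $\psi^\ast_{\mu_j-j}$ and each $\psi(z_i)$ contributes $\psi_{\lambda_i-i}$, and using $\bra{-r}e^{-\Theta}=\bra{-r}$ to absorb the leftmost exponential into ${}^g\bra{\mu}$ the coefficient becomes
\[
{}^g\bra{\mu}\,e^{-\Theta}\,e^{H(x)}\,\psi_{\lambda_1-1}e^{\Theta}\cdots\psi_{\lambda_r-r}e^{\Theta}\,e^{-r\Theta}\ket{-r}.
\]
By Proposition \ref{main_Iwao_G_finite} the ket $\psi_{\lambda_1-1}e^{\Theta}\cdots e^{\Theta}e^{-r\Theta}\ket{-r}$ paired with $\bra{0}e^{H(x)}$ reproduces the truncation $G^r_\lambda(x)$, while the single $e^{-\Theta}$ to the left of $e^{H(x)}$ is exactly the factor that, through Lemma \ref{lemma:rook}, converts a $\slsl$-type expression into a $/$-type one, precisely as in the passage from Theorem \ref{thm:G//} to Theorem \ref{thm:G/}. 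Writing the coefficient through Theorem \ref{thm:adjoint_action_fermion} as $g^\perp G^r_\lambda(x)$, where $g$ is the $\omega$-dual of ${}^g\bra{\mu}e^{-\Theta}$ and hence a linear combination of the functions $g_\sigma$ with $\sigma\subseteq\mu$ (so $\ell(\sigma)\le\ell(\mu)$), and comparing with the infinite-variable formula $G_{\lambda/\mu}(x)={}^g\bra{\mu}e^{-\Theta}e^{H(x)}\ket{\lambda}^G=g^\perp G_\lambda(x)$ of Theorem \ref{thm:G/}, the two differ by $g^\perp(G_\lambda-G^r_\lambda)$. Since $G_\lambda-G^r_\lambda\in I_r$, Corollary \ref{cor:g_adjoint_distance} together with Lemma \ref{lemma:adjoint_distance} pushes this difference into $I_{r-\ell(\mu)}$, so by \eqref{eq:M_n(1)} it vanishes after the substitution $x_{m+1}=x_{m+2}=\cdots=0$ as soon as $m\le r-\ell(\mu)$. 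I expect this remainder-control bookkeeping to be the main obstacle: it is what makes the degree hypothesis $m\le r-\ell(\mu)$ essential and what guarantees the coefficient is honestly $G_{\lambda/\mu}(x_1,\dots,x_m)$ rather than a merely congruent series.

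The extraction step is then routine. Because $z_i$ enters only through row $i$ and $w_j$ only through column $j$ of the determinant, coefficient extraction commutes with $\det$: the coefficient of the full monomial in $\det(M_{ij})$ equals $\det(c_{ij})$, where $c_{ij}$ is the coefficient of $z_i^{\lambda_i-i}w_j^{\mu_j-j}$ in
\[
M_{ij}=(1+\beta w_j)^{-j}(1+\beta z_i^{-1})^{i-1}\mathcal{H}(z_i)\sum_{p=-r}^{\infty}z_i^pw_j^p.
\]
Expanding $(1+\beta w_j)^{-j}=\sum_{k\ge0}\binom{-j}{k}\beta^kw_j^k$ forces $p=\mu_j-j-k$, and the cutoff $p\ge -r$ restricts the sum to $0\le k\le\mu_j-j+r$, which is precisely the summation range in the statement. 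For the surviving $z_i$-part I would use that the coefficient of $z_i^{q}$ in $(1+\beta z_i^{-1})^{i-1}\mathcal{H}(z_i)$ is, directly from the definition of $H^{(i-1)}_q$, equal to $H^{(i-1)}_q$, here with $q=\lambda_i-\mu_j-i+j+k$. Applying the same specialization $x_{m+1}=x_{m+2}=\cdots=0$ that appeared in the identification step turns every $h_n(x)$ into $h_n(x_1,\dots,x_m)$, so collecting the contributions reproduces the $(i,j)$-entry of the asserted determinant in complete analogy with the proof of Proposition \ref{prop:det_G//_finite}, the only change being the exponent $-j$ in place of $-(j-1)$ in the $w_j$-factor; no Vandermonde-type identity beyond the definition of $H^{(i)}_p$ is needed.
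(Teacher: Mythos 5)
Your strategy coincides with the paper's: extract the coefficient of $z_1^{\lambda_1-1}\cdots z_r^{\lambda_r-r}w_1^{\mu_1-1}\cdots w_r^{\mu_r-r}$ from $\overline{\Psi}$ in \eqref{eq:def_of_Psi} twice, once as a truncation of $G_{\lambda/\mu}$ and once from the Wick determinant, and equate. Your identification step is sound and in fact more careful than the paper's terse assertion about the coefficient of $\overline{\Psi}$: inverting Lemma \ref{lemma:rook} to write ${}^g\bra{\mu}e^{-\Theta}$ as a $\ZZ[\beta]$-combination of ${}^g\bra{\sigma}$ with $\sigma\subseteq\mu$, and then controlling $g^\perp(G_\lambda-G^r_\lambda)$ through Corollary \ref{cor:g_adjoint_distance}, is precisely what makes the hypothesis $m\le r-\ell(\mu)$ do its job.

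The problem is the last sentence of your extraction step. What your computation actually yields is
\[
c_{ij}=\sum_{k=0}^{\mu_j-j+r}\binom{-j}{k}\,\beta^k\,H^{(i-1)}_{\lambda_i-\mu_j-i+j+k}(x_1,\dots,x_m),
\]
the $\beta^k$ coming unavoidably from $(1+\beta w_j)^{-j}=\sum_{k\ge 0}\binom{-j}{k}\beta^k w_j^k$, while the entry in the statement carries no $\beta^k$; these are different symmetric functions, so ``collecting the contributions reproduces the asserted determinant'' is false as written. The mismatch is not something you can remove by re-collecting: the printed statement (like the $H^{(i-1)}$-rewriting displayed after Proposition \ref{prop:det_G//_finite}, whose shape it copies) has simply dropped the factor $\beta^k$. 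One can see this concretely: for $\lambda=\mu=(1)$, $r=2$, $m=1$, the printed right-hand side is
\[
\det\begin{pmatrix}1-x_1+x_1^2 & x_1^2\\ 1-\beta+\beta x_1 & 1+\beta x_1\end{pmatrix}=1-(1-\beta)x_1,
\]
which is not $G_{(1)/(1)}(x_1)=1$, whereas your formula with the $\beta^k$ restored gives exactly $1$. So your derivation is in substance the correct proof of the corrected statement; the genuine gap in your write-up is that you assert agreement with the printed formula instead of noticing that your (correct) answer differs from it by these $\beta^k$'s and flagging the discrepancy as a typo in the proposition.
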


\section{Free-fermionic presentation of skew dual stable Grothendieck polynomials}\label{sec:skewdual}

\subsection{Determinantal formula for $g_{\lambda/\mu}(x)$}

In this section, we study the \textit{skew dual stable Grothendieck polynomials} $g_{\lambda/\mu}(x)$.
It is known (see, for example, Yeliussizov~\cite{yeliussizov2017duality}) that 
\[
\Delta(g_\lambda)=\sum_{\mu\subset \lambda}g_\mu\otimes g_{\lambda/ \mu},
\]
which is equivalent to $g_{\lambda/\mu}(x)=G_\mu^\perp g_\lambda(x)$.
From this, we immediately obtain the following presentation:
\begin{thm}[A free-fermionic presentation of $g_{\lambda/\mu}$]\label{thm:g/}
We have
\[
g_{\lambda/\mu}(x)={}^G\bra{\mu}e^{H(x)}\ket{\lambda}^g.
\]
\end{thm}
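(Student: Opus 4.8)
The plan is to mirror exactly the derivation used for Theorem \ref{thm:G//}, invoking the general adjoint-action principle established in Theorem \ref{thm:adjoint_action_fermion}. The defining relation is $g_{\lambda/\mu}(x)=G_\mu^\perp g_\lambda(x)$, so we are computing the adjoint action of $G_\mu$ on the symmetric function $g_\lambda$. The crucial observation is that Theorem \ref{thm:adjoint_action_fermion} is stated for the Hall-inner-product adjoint $g^\perp$ where $g=\bra{0}e^{H(x)}\ket{w}$, and it produces $g^\perp f(x)=\bra{w}e^{H(x)}\ket{v}$ with $\bra{w}=\omega(\ket{w})$. To apply it here I would set $f(x)=g_\lambda(x)$ and $g(x)=G_\mu(x)$.

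First I would recall the two free-fermionic presentations already at hand: Proposition \ref{prop:main_Iwao_g} gives $g_\lambda(x)=\bra{0}e^{H(x)}\ket{\lambda}^g$, so the ket is $\ket{v}=\ket{\lambda}^g$; and Proposition \ref{prop:main_Iwao_G} gives $G_\mu(x)=\bra{0}e^{H(x)}\ket{\mu}^G$, so the relevant ket is $\ket{w}=\ket{\mu}^G$. Then I would observe that, by the definition ${}^G\bra{\mu}:=\omega(\ket{\mu}^G)$ (in parallel with the notation ${}^g\bra{\mu}:=\omega(\ket{\mu}^g)$ introduced in Theorem \ref{thm:G//}), Theorem \ref{thm:adjoint_action_fermion} applies directly with $f=g_\lambda$, $\ket{v}=\ket{\lambda}^g$, $\ket{w}=\ket{\mu}^G$, and $\bra{w}={}^G\bra{\mu}$. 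Substituting these into the conclusion $g^\perp f(x)=\bra{w}e^{H(x)}\ket{v}$ yields
\[
g_{\lambda/\mu}(x)=G_\mu^\perp g_\lambda(x)={}^G\bra{\mu}e^{H(x)}\ket{\lambda}^g,
\]
which is precisely the claimed presentation.

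The proof is therefore essentially a matter of correctly identifying the roles of the two kets and ensuring the adjoint in Theorem \ref{thm:adjoint_action_fermion} is taken with respect to the Hall inner product rather than the duality pairing $\langle G_\lambda,g_\mu\rangle=\delta_{\lambda,\mu}$ of \eqref{eq:duality}. The only subtlety worth flagging is that the statement $g_{\lambda/\mu}=G_\mu^\perp g_\lambda$ uses the perpendicular operator defined via the Hall inner product, so I would briefly confirm that this is the same $^\perp$ appearing in Theorem \ref{thm:adjoint_action_fermion}; this is immediate since the $^\perp$ in the excerpt is uniformly defined from $\langle\cdot,\cdot\rangle$. I do not anticipate any real obstacle here: unlike Theorem \ref{thm:G/}, which required the extra combinatorial input of Lemma \ref{lemma:rook} to convert between the $\slsl$ and $/$ conventions, the present statement is a clean one-line corollary of the adjoint-action theorem together with the two previously established presentations, exactly as in the proof of Theorem \ref{thm:G//}.
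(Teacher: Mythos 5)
Your proposal is correct and matches the paper's own (essentially one-line) argument: the paper likewise starts from $g_{\lambda/\mu}(x)=G_\mu^\perp g_\lambda(x)$ and obtains the presentation immediately from Theorem \ref{thm:adjoint_action_fermion} together with Propositions \ref{prop:main_Iwao_G} and \ref{prop:main_Iwao_g}, exactly as in the proof of Theorem \ref{thm:G//}. Your identification of $\ket{v}=\ket{\lambda}^g$, $\ket{w}=\ket{\mu}^G$, and ${}^G\bra{\mu}=\omega(\ket{\mu}^G)$ is precisely what the paper intends.
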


Let us proceed to a determinantal formula.
Consider the formal series
\begin{align*}
&\Omega=\Omega(z_1,\dots,z_r,w_1,\dots,w_r)\\
&:=
\bra{-r}e^{\theta}\psi^\ast(w_r)\dots e^{\theta}\psi^\ast(w_1)
e^{H(x)}
\psi(z_1)e^{-\theta}  \dots \psi(z_r)e^{-\theta}\ket{-r},
\end{align*}
whose coefficient of $z_1^{\lambda_1-1}\dots z_r^{\lambda_r-r}w_1^{\mu_1-1}\dots w_r^{\mu_r-r}$ is $g_{\lambda/\mu}(x)$.
By letting
\[
\begin{array}{lll}
P_i&:=e^{(r-i+1)\theta }\psi(z_i)e^{-(r-i+1)\theta}&=(1+\beta z_i)^{r-i+1}\psi(z_i),\\
Q_i&:=e^{(r-i+1)\theta }\psi^\ast(w_i)e^{-(r-i+1)\theta}&=(1+\beta w_i^{-1})^{-(r-i+1)}\psi^\ast(w_i),
\end{array}
\]
we obtain
\begin{align}
\Omega=
\bra{-r}Q_r\dots Q_2Q_1
(e^{H(x)}P_1e^{-H(x)})
\dots 
(e^{H(x)}P_re^{-H(x)})\ket{-r}.\label{eq:Omega_expand}
\end{align}
Using Wick's theorem (Theorem \ref{thm:Wick}), we rewrite \eqref{eq:Omega_expand} as 
\begin{align*}
\det (\bra{-r}Q_je^{H(x)}P_ie^{-H(x)}\ket{-r})_{1\leq i,j\leq r}.
\end{align*}
Since
$e^{H(x)}P_ie^{-H(x)}
=
(1+\beta z_i)^{r-i+1}\mathcal{H}(z_i)\psi(z_i)$,
we have
\begin{align*}
\Omega
=
\det \left( (1+\beta w_j^{-1})^{-(r-j+1)}(1+\beta z_i)^{r-i+1}\mathcal{H}(z_i)
\sum_{p=-r}^{\infty}{z_i^pw_j^p} \right)_{1\leq i,j\leq r}.
\end{align*}
Comparing the coefficients of $z_1^{\lambda_1-1}\dots z_r^{\lambda_r-r}w_1^{\mu_1-1}\dots w_r^{\mu_r-r}$, we obtain:
\begin{prop}\label{prop:det_g}
For $r\geq \ell(\lambda)$, we have
\[
g_{\lambda/ \mu}(x)=
\det 
\left( 
\sum_{n=0}^\infty
{j-i\choose n}
\beta^n
h_{\lambda_i-\mu_j-i+j-n}(x)
\right)_{1\leq i,j\leq r}.
\]
\end{prop}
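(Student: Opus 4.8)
The plan is to read off the claimed formula directly from the determinantal expression for $\Omega$ obtained just above the statement, exactly as in the proofs of Propositions \ref{prop:det_G//} and \ref{prop:det_G/}. By Theorem \ref{thm:g/} together with the definitions of $\ket{\lambda}^g$ and ${}^G\bra{\mu}=\omega(\ket{\mu}^G)$, the coefficient of $z_1^{\lambda_1-1}\cdots z_r^{\lambda_r-r}w_1^{\mu_1-1}\cdots w_r^{\mu_r-r}$ in $\Omega$ equals ${}^G\bra{\mu}e^{H(x)}\ket{\lambda}^g=g_{\lambda/\mu}(x)$, provided $r\geq\ell(\lambda)$ so that $\lambda$ and $\mu$ may be padded with zeros to length $r$. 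Since Wick's theorem (Theorem \ref{thm:Wick}) has already put $\Omega$ into determinantal form with $(i,j)$-entry
\[
(1+\beta w_j^{-1})^{-(r-j+1)}(1+\beta z_i)^{r-i+1}\mathcal{H}(z_i)\sum_{p=-r}^{\infty}z_i^pw_j^p ,
\]
and each entry involves only the variables $z_i$ and $w_j$, it suffices to compute, for every pair $(i,j)$, the coefficient of $z_i^{\lambda_i-i}w_j^{\mu_j-j}$ in this single entry; the determinant of the resulting scalar matrix is then the sought coefficient in $\Omega$, hence $g_{\lambda/\mu}(x)$.

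First I would expand each factor as a power series: $(1+\beta z_i)^{r-i+1}=\sum_a\binom{r-i+1}{a}\beta^az_i^a$, $(1+\beta w_j^{-1})^{-(r-j+1)}=\sum_b\binom{-(r-j+1)}{b}\beta^bw_j^{-b}$, and $\mathcal{H}(z_i)=\sum_{k\geq 0}h_k(x)z_i^k$. Matching the power of $w_j$ forces $p=\mu_j-j+b$, and matching the power of $z_i$ then forces the homogeneous degree $k=\lambda_i-\mu_j-i+j-a-b$. Setting $n=a+b$, the coefficient of $z_i^{\lambda_i-i}w_j^{\mu_j-j}$ becomes the single sum over $n$ of $\beta^n h_{\lambda_i-\mu_j-i+j-n}(x)$, weighted by the convolution $\sum_{a+b=n}\binom{r-i+1}{a}\binom{-(r-j+1)}{b}$.

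The key step is to collapse this inner convolution. By the Vandermonde--Chu identity, valid for generalized binomial coefficients defined through the power-series expansion, $\sum_{a+b=n}\binom{r-i+1}{a}\binom{-(r-j+1)}{b}=\binom{(r-i+1)-(r-j+1)}{n}=\binom{j-i}{n}$, which is precisely the coefficient appearing in the statement. This plays the role of the binomial identities verified by direct calculation in Propositions \ref{prop:det_G//} and \ref{prop:det_G/}, and reduces each matrix entry to $\sum_{n\geq 0}\binom{j-i}{n}\beta^n h_{\lambda_i-\mu_j-i+j-n}(x)$.

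The one point that genuinely requires care is the lower limit $p\geq -r$ coming from Corollary \ref{cor:dual}: I must check that truncating the geometric-type sum at $p=-r$ never discards a term contributing to the coefficient we want. The required index is $p=\mu_j-j+b$ with $b\geq 0$, so $p\geq\mu_j-j\geq -j\geq -r$ because $1\leq j\leq r$ and $\mu_j\geq 0$; hence no admissible term is lost, and this is exactly where the hypothesis $r\geq\ell(\lambda)$ does its work. Beyond this bookkeeping the argument is entirely routine, so I expect the main (and only modest) obstacle to be organizing the fourfold index matching cleanly enough that the reduction to the Vandermonde--Chu convolution is transparent.
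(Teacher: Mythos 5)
Your proposal is correct and follows essentially the same route as the paper: read off the coefficient of $z_1^{\lambda_1-1}\cdots z_r^{\lambda_r-r}w_1^{\mu_1-1}\cdots w_r^{\mu_r-r}$ entry-wise from the Wick-determinant expression for $\Omega$, and collapse the resulting binomial convolution $\sum_{a+b=n}\binom{r-i+1}{a}\binom{-(r-j+1)}{b}=\binom{j-i}{n}$ by Vandermonde--Chu, which is exactly the "direct calculation" the paper leaves implicit (and states explicitly in the analogous Propositions \ref{prop:det_G//} and \ref{prop:det_G/}). Your additional check that the truncation $p\geq -r$ discards nothing, since $p=\mu_j-j+b\geq -j\geq -r$, is a correct and worthwhile piece of bookkeeping that the paper glosses over.
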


For $i\in \ZZ$, let
\[
h_p^{(i)}(x)=\sum_{k=0}^{\infty}{i\choose k}
\beta^k h_{p-k}(x).
\]
Then Proposition \ref{prop:det_g} is rewritten as 
\[
g_{\lambda/ \mu}(x)=
\det 
\left( 
h^{(j-i)}_{\lambda_i-\mu_j-i+j}(x)
\right)_{1\leq i,j\leq r}.
\]

\begin{rem}\label{rem:various}
By using the same technique as before, we can also derive determinantal formulas for the symmetric functions
\[
s_\mu^\perp G_\lambda(x)=\bra{\mu}e^{H(x)}\ket{\lambda}^G,\quad
s_\mu^\perp g_\lambda(x)=\bra{\mu}e^{H(x)}\ket{\lambda}^g,\quad
g_\mu^\perp g_\lambda(x)={}^g\bra{\mu}e^{H(x)}\ket{\lambda}^g.
\]
In fact, we have
\begin{align}
&s_\mu^\perp G_\lambda(x)
=
\det\left(
\sum_{k=0}^{\infty}\sum_{n=0}^{\mu_j-j+r}
{r\choose n}
{i-r\choose k-n}
\beta^kG_{\lambda_i-\mu_j-i+j-k}(x)
\right)_{1\leq i,j\leq r},\\
	&\begin{aligned}
	s_\mu^\perp G_\lambda(x_1,\dots,x_m)
	=\det\left(
	H^{(i-1)}_{\lambda_i-\mu_j-i+j}(x_1,\dots,x_m)
	\right)_{1\leq i,j\leq r}
	,
	\end{aligned}\label{eq:s^muG}
\\
	&\begin{aligned}
	s_\mu^\perp g_\lambda(x)
	&
	=\det\left(
	h^{(1-i)}_{\lambda_i-\mu_j-i+j}(x)
	\right)_{1\leq i,j\leq r},
	\end{aligned}\label{eq:s^mug}
	\\
	&\begin{aligned}
	g_\mu^\perp g_\lambda(x)
	=\det\left(
	\sum_{n=0}^{\mu_j-j+r}
	{1-j\choose n}
	h^{(1-i)}_{\lambda_i-\mu_j-i+j+2n}(x)
	\right)_{1\leq i,j\leq r}.
	\end{aligned}
\end{align}
We do not give proofs here but leave them to the reader because they are quite similar to those that we have given before.
Note that this list does not contain ``$G_\mu^\perp G_\lambda$" since the bilinear form ``$\langle G_\mu,G_\lambda\rangle$" is undefinable.
\end{rem}

\begin{rem}
By substituting $(x_1,\dots,x_m)=(0,\dots,0)$ and $(x_1,x_2,\dots)=(0,0,\dots)$ to $(\ref{eq:s^muG}$--$\ref{eq:s^mug})$, we have
\begin{gather*}
s_\mu^\perp G_\lambda(0,\dots,0)
=
\det\left(
\beta^{\mu_j-\lambda_i-j+i}
{i-1\choose \mu_j-\lambda_i-j+i}
\right)_{1\leq i,j\leq r},\\
s_\mu^\perp g_\lambda(0)
=
\det\left(
\beta^{\lambda_i-\mu_j-i+j}
{1-i\choose \lambda_i-\mu_j-i+j}
\right)_{1\leq i,j\leq r}.
\end{gather*}
Since $\langle  f,s_\mu \rangle=\langle s_\mu^\perp f,1 \rangle =s_\mu^\perp f(0)$ for $f\in \widehat{\Lambda}$, we obtain
\begin{align*}
&G_\lambda(x_1,\dots,x_m)=\sum_{\mu\supset \lambda}
\beta^{\zet {\mu/\lambda}}
\det\left(
{i-1\choose \mu_j-\lambda_i-j+i}
\right)_{1\leq i,j\leq r}
s_\mu(x_1,\dots,x_m),\\
&g_\lambda(x)=\sum_{\mu\subset \lambda}
\beta^{\zet{\lambda/\mu}}
\det\left(
{1-i\choose \lambda_i-\mu_j-i+j}
\right)_{1\leq i,j\leq r}
s_\mu(x).
\end{align*}
These formulas are known to have an interesting combinatorial interpretation.
See \cite{lam2007combinatorial,lenart2000combinatorial}.
\end{rem}

\section{Non-commutative skew Schur function}\label{sec:noncommSchur}

In the following sections, we study combinatorial aspects of the skew Grothendieck polynomials.
Our main tool is the non-commutative Schur function~\cite{fomin1998noncommutative}.



\subsection{$s_{\lambda/\mu}(\bm{u})$ and $s_{\lambda/\mu}(\bm{V})$}

Let 
$
\mathfrak{X}:=\bigoplus_{\lambda}\QQ[\beta]\cdot \lambda
$
be the $\QQ[\beta]$-module freely generated by all partitions $\lambda$.

\begin{defi}
Let $\ee_i=(0,\dots,\stackrel{\stackrel{i}{\vee}}{1},\dots,0)$ be the $i$-th fundamental vector.
We define the linear operators $u_i:\mathfrak{X}\to \mathfrak{X}$ and $V_i:\mathfrak{X}\to \mathfrak{X}$ that act on a basis $\lambda$ as
\begin{align*}
&u_i\cdot \lambda=(-\beta)^{\zet{\overline{\lambda+\ee_i}}-\zet{\lambda+\ee_i} }\cdot \overline{\lambda+\ee_i},\\
&V_i\cdot \lambda=
\begin{cases}
\lambda\setminus \{\mbox{a box in $i$-th row} \}, & \mbox{if possible},\\
-\beta \cdot \lambda, & \mbox{otherwise}.
\end{cases}
\end{align*}
\end{defi}
By seeing their actions on the basis, one proves that the operators $u_i$ satisfy the following \textit{Knuth relation}:
\begin{eqnarray*}
\begin{gathered}
u_iu_ku_j=u_ku_iu_j,\quad i\leq j<k,\\
u_ju_iu_k=u_ju_ku_i,\quad i<j\leq k.
\end{gathered} 
\end{eqnarray*}
On the other hand, the operators $V_i$ satisfy the ``reverse'' Knuth relation:
\begin{eqnarray*}
\begin{gathered}
V_iV_kV_j=V_kV_iV_j,\quad i\geq j>k,\\
V_jV_iV_k=V_jV_kV_i,\quad i>j\geq k.
\end{gathered}
\end{eqnarray*}

We consider two types of characters $1,2,3,\dots,1',2',3',\dots$ following the order $1<2<3<\cdots$ and $1'>2'>3'>\cdots$.
Let $T$ be a \textit{skew tableau} \cite{fulton_1996} where each box contains a symbol $i$.
The \textit{column word} of $T$ is the sequence $w_T$ obtained by reading the entries of $T$ from bottom to top in each column, starting in the left column and moving to the right.
For example, $w_T=4331211$ for $T=\ygtab{\bl&\bl&1&1\\\bl&1&2\\3&3\\4}$.
Let $u^T$ be the (non-commutative) monomial defined by
\[
u^T:=u_{w_T(1)}u_{w_T(2)}\dots u_{w_T(N)}.
\]
We have $u^T=u_4u_3u_3u_1u_2u_1u_1$ in the above example.

Similarly, we define the non-commutative monomial $V^{T'}$ with $T'$ a skew tableau where each box contains an entry $j'$.
For example, $V^{T'}=V_1V_2V_3V_3V_4V_3V_1$
for 
$T'=\ygtab{\bl&\bl& 4'&3'&1'\\\bl&3'&3'\\\bl &2'\\1'}$.

\begin{defi}[Non-commutative skew Schur function]\label{defi:noncommutativeSchur}
For a skew shape $\lambda/\mu$, we define the non-commutative skew Schur functions
$s_{\lambda/\mu}(\bm{u}_m)=s_{\lambda/\mu}(u_1,\dots,u_m)$ and $s_{\lambda/\mu}(\bm{V}_n)=s_{\lambda/\mu}(V_1,\dots,V_n)$
as
\[
s_{\lambda/\mu}(\bm{u}_m):=\sum_{\substack{
T\, \mathrm{is}\,\mathrm{of}\, \mathrm{shape}\, \lambda/\mu,\\ 
\mathrm{Each}\, \mathrm{entry}\, \mathrm{of}\, T\, \mathrm{is\, in}\,  \{1,2,\dots,m\}}
}
{u^T}
\]
and
\[
s_{\lambda/\mu}(\bm{V}_n):=\sum_{\substack{
T'\, \mathrm{is}\,\mathrm{of}\, \mathrm{shape}\, \lambda/\mu,\\ 
\mathrm{Each}\, \mathrm{entry}\, \mathrm{of}\, T'\, \mathrm{is\, in}\,  \{1',2',\dots,n'\}}
}
{V^{T'}}.
\]
If $\mu=\emptyset$, we write $s_{\lambda}(\bm{u}_m)=s_{\lambda/\emptyset}(\bm{u}_m)$ and $s_{\lambda}(\bm{V}_m)=s_{\lambda/\emptyset}(\bm{V}_m)$.
\end{defi}

The following theorem is given by Fomin-Greene~\cite[Theorem 1.1]{fomin1998noncommutative}.
\begin{thm}\label{prop:noncommutativeSchur}
Let $u_1,\dots,u_n$ be a set of non-commutative operators that satisfies the Knuth relation.
Then the canonical map $s_{\lambda/\mu}\mapsto s_{\lambda/\mu}(\bm{u}_n)$ extends to a homomorphism from $\Lambda_n$ to the algebra $\Lambda_n(\bm{u})$ generated by the non-commutative Schur functions $s_\lambda(\bm{u}_n)$.
In particular, the $s_{\lambda}(\bm{u}_n)$ commute and the $s_{\lambda/\mu}(\bm{u}_n)$ expand according to the usual Littlewood-Richardson rule.
\end{thm}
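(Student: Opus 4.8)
The plan is to split the statement into the existence of the algebra homomorphism and the identification of the combinatorial $s_{\lambda/\mu}(\bm{u}_n)$ with its image, and to isolate the commutativity of the noncommutative elementary functions as the single nonformal input. By the fundamental theorem of symmetric functions, $\Lambda_n$ is freely generated as a commutative algebra by $e_1,\dots,e_n$; hence to produce a homomorphism $\phi\colon\Lambda_n\to\Lambda_n(\bm{u})$ it suffices to set $\phi(e_k)=e_k(\bm{u}_n)=s_{(1^k)}(\bm{u}_n)=\sum_{i_1>\dots>i_k}u_{i_1}\cdots u_{i_k}$ and to check that these images commute pairwise. Granting this, the final ``Littlewood--Richardson'' claim is then formal: since $s_\lambda s_\mu=\sum_\nu c^\nu_{\lambda\mu}s_\nu$ holds in $\Lambda_n$ and $\phi$ is a ring map, the images $s_\lambda(\bm{u}_n)$ commute and expand with the same coefficients.

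The crux, and what I expect to be the main obstacle, is the commutativity of the $e_k(\bm{u}_n)$. First I would note that the two displayed relations are exactly the elementary Knuth transformations, so applying them letter by letter shows that the monomial $u^w=u_{w(1)}\cdots u_{w(N)}$ is an invariant of the plactic class of the word $w$. Encoding the elementary functions in the series $E(t)=\prod_{i=n}^{1}(1+t\,u_i)$ (the product taken in decreasing order of $i$), the desired identity becomes $E(s)E(t)=E(t)E(s)$, whose coefficient of $s^at^b$ reads $e_a(\bm{u}_n)e_b(\bm{u}_n)=e_b(\bm{u}_n)e_a(\bm{u}_n)$. Each side is a sum of $u^w$ over words $w$ that factor into a strictly decreasing block of length $a$ followed by one of length $b$ (respectively $b$ then $a$); grouping these by plactic class and invoking plactic invariance, the whole identity collapses to the purely combinatorial symmetry $N_C(a,b)=N_C(b,a)$, where $N_C(a,b)$ counts the factorizations of a representative of the class $C$ into a strictly decreasing prefix of length $a$ and a strictly decreasing suffix of length $b$. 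This symmetry I would prove bijectively, by a shape-preserving jeu de taquin argument, and it is the genuinely combinatorial heart of the theorem.

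It remains to verify $\phi(s_{\lambda/\mu})=s_{\lambda/\mu}(\bm{u}_n)$. Here I would start from the dual Jacobi--Trudi determinant $s_{\lambda/\mu}=\det\bigl(e_{\lambda'_i-\mu'_j-i+j}\bigr)$, which $\phi$ sends to the determinant of the now-commuting operators $e_\bullet(\bm{u}_n)$. Expanding the determinant produces a signed sum over sequences of columns; reading each column as a strictly decreasing factor and concatenating, the terms that do not assemble into a semistandard tableau cancel in pairs under a Lindstr\"om--Gessel--Viennot sign-reversing involution, whose single swap is again licensed by the Knuth relations via plactic invariance. The surviving terms are precisely $\sum_T u^T$ over semistandard tableaux $T$ of shape $\lambda/\mu$, that is $s_{\lambda/\mu}(\bm{u}_n)$. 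Everything outside the commutativity step is either formal (freeness of $\Lambda_n$) or a standard cancellation, and the parallel statement for the operators $V_i$, which satisfy the reverse Knuth relations, follows by the mirror-image argument with columns read in the opposite order.
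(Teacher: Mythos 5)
You should note at the outset that the paper contains no proof of this statement: Theorem \ref{prop:noncommutativeSchur} is quoted from Fomin--Greene \cite[Theorem 1.1]{fomin1998noncommutative}, so the only comparison available is with their argument. Your skeleton matches theirs: the crux is pairwise commutativity of the $e_k(\bm{u}_n)$, after which the homomorphism exists for free because $\Lambda_n$ is the free commutative algebra on $e_1,\dots,e_n$ (this also settles well-definedness, since the skew Schur functions are linearly dependent), and the real content becomes identifying $\phi(s_{\lambda/\mu})=\det\bigl(e_{\lambda'_i-\mu'_j-i+j}(\bm{u}_n)\bigr)$ with the combinatorial sum $\sum_T u^T$. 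Where you diverge is in how the two combinatorial inputs are established: Fomin and Greene verify the commutation lemma directly from the relations and reach the tableau expansion via the noncommutative Cauchy product, whereas you invoke the full plactic machinery (Knuth classes, jeu de taquin, Littlewood--Richardson symmetry), which is legitimate here precisely because the hypotheses are the full Knuth relations. Your commutativity step is sound as stated: grouping $e_a(\bm{u}_n)e_b(\bm{u}_n)=\sum_C N_C(a,b)\,u^C$ by Knuth class requires no linear independence of the operators $u^C$, and $N_C(a,b)=N_C(b,a)$ is exactly the symmetry $c^{\nu(C)}_{(1^a)(1^b)}=c^{\nu(C)}_{(1^b)(1^a)}$, provable by tableau switching as you propose.

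The one step that is genuinely under-justified is the final cancellation. You assert that the non-tableau terms of the expanded determinant cancel under an LGV-type involution ``whose single swap is licensed by the Knuth relations.'' The classical tail-swap preserves the multiset of letters, which suffices commutatively, but it does not obviously preserve Knuth classes, and Knuth equivalence of the paired words is exactly what operator-valued cancellation requires. The repair lives inside your own framework: for a two-column Knuth class the factorization into strictly decreasing words of prescribed lengths is unique when it exists (since $c^{\nu}_{(1^p)(1^q)}\in\{0,1\}$), so the swap at the first violating adjacent pair of blocks must be defined as this unique refactorization within its class; one then still has to check involutivity and that the fixed points are precisely the semistandard tableaux attached to the identity permutation. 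Alternatively, you can bypass involutions entirely with the toolkit you already set up: the same class-grouping, iterated (equivalently, column-insertion RSK), shows that the coefficient of each class $C$ in $e_{\beta_1}(\bm{u}_n)\cdots e_{\beta_r}(\bm{u}_n)$ equals the coefficient of $s_{\nu(C)}$ in the commutative product $e_{\beta_1}\cdots e_{\beta_r}$; applying this to the signed expansion and using the commutative dual Jacobi--Trudi identity gives $\det\bigl(e_{\lambda'_i-\mu'_j-i+j}(\bm{u}_n)\bigr)=\sum_C c^{\lambda}_{\mu\,\nu(C)}\,u^C$, while the rectification form of the Littlewood--Richardson rule gives the same class-by-class expansion for $\sum_T u^T$. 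With either repair your outline becomes a complete proof; as written, the LGV claim is the only gap.
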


\subsection{Actions on $\ket{n}^G$}

Let $n=(n_1,\dots,n_r)\in \ZZ^r$ and $i>0$.
By using \eqref{eq:relation_added} and the commutative relations $[a_{-i},e^\Theta]=0$, $[a_{i},e^{\Theta}]=-(-\beta)^ie^{\Theta}$, we obtain
\begin{align}
&a_{-i}\ket{n}^G=\sum_{j=1}^r\ket{n+i\ee_j }^G+\ket{\delta},\qquad \ket{\delta}=\psi_{n_1-1}e^\Theta\dots  \psi_{n_r-r}e^\Theta a_{-i}\ket{-r}
\label{eq:a_-i_on_G},\\
&\{a_{i}+r(-\beta)^i\}\ket{n}^G=\sum_{j=1}^r\ket{n-i\ee_j }^G\label{eq:a_i_on_G}.
\end{align}
The vector $\ket{\delta}$ will be seen as a ``remainder term," that is, a sufficiently small element in the topological ring $\widehat{\Lambda}$.

Let $E_i(P_1,P_2,\dots)$ and $E^{(r)}(P_1,P_2,\dots)$ be the polynomials in $P_1,P_2,\dots$ that satisfy
\[
e_i(x)=E_i(p_1(x),p_2(x),\dots),\quad
e_i(\overbrace{-\beta,\dots,-\beta}^r,x)=E^{(r)}_i(p_1(x),p_2(x),\dots).
\]
From (\ref{eq:a_-i_on_G}--\ref{eq:a_i_on_G}), we obtain
\begin{align*}
&E_i(a_{-1},a_{-2},\dots)\ket{n}^G=\sum_{1\leq j_1< \dots< j_i\leq r}\ket{n+\ee_{j_1}+\dots+\ee_{j_i} }^G+\ket{\Delta},\\
&E^{(r)}_i(a_{1},a_{2},\dots)\ket{n}^G=\sum_{1\leq j_1< \dots< j_i\leq r}\ket{n-\ee_{j_1}-\dots-\ee_{j_i} }^G,
\end{align*}
where $\ket{\Delta}$ is a finite sum of the vectors
\begin{equation}\label{eq:remainder_term}
\psi_{m_1-1}e^\Theta\dots \psi_{m_r-r}e^\Theta a_{-i_1}a_{-i_2}\dots a_{-i_s}\ket{-r},
\end{equation}
with $m_p\geq n_p$ ($p=1,2,\dots,r$) and $i_1,i_2,\dots,i_s>0$ ($s>0$).

If $n=\lambda$ is a partition, the $\ket{\lambda+\ee_{j_1}+\dots+\ee_{j_i} }^G$ and $\ket{\lambda-\ee_{j_1}-\dots-\ee_{j_i} }^G$ satisfy the assumption of Corollary \ref{cor:X(n)}.
Therefore, they can be written as 
\begin{align*}
&\ket{\lambda+\ee_{j_1}+\dots+\ee_{j_i} }^G=(-\beta)^{\zet{\overline{\lambda+\ee_{j_1}+\dots+\ee_{j_i}}}-\zet{\lambda+\ee_{j_1}+\dots+\ee_{j_i}}}\ket{\overline{\lambda+\ee_{j_1}+\dots+\ee_{j_i}} }^G,\\
&\ket{\lambda-\ee_{j_1}-\dots-\ee_{j_i} }^G=(-\beta)^{\zet{\overline{\lambda-\ee_{j_1}-\dots-\ee_{j_i}}}-\zet{\lambda-\ee_{j_1}-\dots-\ee_{j_i}}}\ket{\overline{\lambda-\ee_{j_1}-\dots-\ee_{j_i}} }^G.
\end{align*}
Further, by seeing the actions of $u_i$ and $V_i$, they are rewritten as
\begin{align*}
&\ket{\lambda+\ee_{j_1}+\dots+\ee_{j_i} }^G=\ket{u_{j_i}\dots u_{j_2}u_{j_1}\cdot \lambda}^G,\\
&\ket{\lambda-\ee_{j_1}-\dots-\ee_{j_i} }^G=\ket{V_{j_1}V_{j_2}\dots V_{j_i}\cdot \lambda}^G.
\end{align*}
Hence, we conclude
\begin{align*}
&E_i(a_{-1},a_{-2},\dots)\ket{\lambda}^G=\ket{e_i(\bm{u}_r)\cdot \lambda }^G+\ket{\Delta},\\
&E^{(r)}_i(a_{1},a_{2},\dots)\ket{\lambda}^G=\ket{e_i(\bm{V}_r)\cdot \lambda}^G,
\end{align*}
which lead the following proposition.
\begin{prop}\label{prop:action_on_G}
Let $\lambda$ be a partition and $r\geq \ell(\lambda)$.
For a symmetric polynomial $f(x_1,\dots,x_r)$, let $f(P_1,P_2,\dots)$ denote the polynomial in $P_1,P_2,\dots$ that satisfies $f(p_1(x),p_2(x),\dots,)=f(x)$.
We further define
\[
f^{(r)}(P_1,P_2,\dots):=f(\overbrace{-\beta,\dots,-\beta}^r,P_1,P_2,\dots).
\]
Then we have
\begin{align*}
&f(a_{-1},a_{-2},\dots)\ket{\lambda}^G=\ket{f(\bm{u}_r)\cdot \lambda }^G+\ket{\Delta},\quad \mbox{with}\quad \bra{0}e^{H(x)}\ket{\Delta}\in I_r,\\
&f^{(r)}(a_{1},a_{2},\dots)\ket{\lambda}^G=\ket{f(\bm{V}_r)\cdot \lambda}^G.
\end{align*}
\end{prop}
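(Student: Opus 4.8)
The plan is to reduce both identities to the already-established case $f=e_i$ (the two displayed formulas immediately preceding the statement) and to propagate them to a general symmetric polynomial by multiplicativity. The first observation is that $a_{-1},a_{-2},\dots$ commute pairwise, since $[a_m,a_n]=m\delta_{m+n,0}$ vanishes for $m,n<0$; likewise $a_1,a_2,\dots$ commute. Hence $f\mapsto f(a_{-1},a_{-2},\dots)$ and $f\mapsto f^{(r)}(a_1,a_2,\dots)$ are algebra homomorphisms out of $\Lambda$. On the combinatorial side, Theorem~\ref{prop:noncommutativeSchur} (together with its analogue for the reverse Knuth relation obeyed by the $V_i$) makes $f\mapsto f(\bm{u}_r)$ and $f\mapsto f(\bm{V}_r)$ algebra homomorphisms into $\mathrm{End}(\mathfrak{X})$. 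Since the $e_i$ generate $\Lambda$, it therefore suffices to prove each identity for products $f=e_{i_1}e_{i_2}\cdots e_{i_k}$, and I would do this by induction on $k$, the case $k=1$ being exactly the two established formulas.

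For the first identity I would peel off one factor. Writing $f=e_{i_1}f'$ with $f'=e_{i_2}\cdots e_{i_k}$ and using that the $a_{-i}$ commute, $f(a_{-1},\dots)\ket{\lambda}^G=E_{i_1}(a_{-1},\dots)\bigl(f'(a_{-1},\dots)\ket{\lambda}^G\bigr)$. By the inductive hypothesis the inner vector equals $\ket{f'(\bm{u}_r)\cdot\lambda}^G+\ket{\Delta'}$, where $f'(\bm{u}_r)\cdot\lambda=\sum_\nu c_\nu\,\nu$ is a $\QQ[\beta]$-combination of partitions of length $\le r$. Applying the base case to each $\ket{\nu}^G$, and using the homomorphism property $e_{i_1}(\bm{u}_r)f'(\bm{u}_r)=f(\bm{u}_r)$ from Theorem~\ref{prop:noncommutativeSchur}, produces the desired main term $\ket{f(\bm{u}_r)\cdot\lambda}^G$ together with two contributions to the remainder: the base-case remainders $\sum_\nu c_\nu\ket{\Delta_\nu}$ and the vector $E_{i_1}(a_{-1},\dots)\ket{\Delta'}$. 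I would check that both are again finite sums of vectors of the shape~\eqref{eq:remainder_term}; for the second, each $a_{-j}$ in $E_{i_1}$ is commuted rightward through the factors $\psi_{m_p-p}e^{\Theta}$ via $[a_{-j},\psi_n]=\psi_{n+j}$ and $[a_{-j},e^{\Theta}]=0$, which only raises some index $m_p$ or appends an extra $a_{-j}$ on the right, preserving the form~\eqref{eq:remainder_term}.

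The main obstacle is the estimate $\bra{0}e^{H(x)}\ket{\Delta}\in I_r$. For this I would strengthen Lemma~\ref{lemma:evaluate1}\eqref{item:evaluate1_3} to an arbitrary number $s\ge 1$ of trailing factors $a_{-i}$, arguing by induction on $s$. Writing $\Xi:=\psi_{m_1-1}e^{\Theta}\cdots\psi_{m_r-r}e^{\Theta}$, the key is that $[H(x),a_{-n}]=p_n(x)$ is central and $\bra{0}a_{-n}=0$, so moving the leftmost trailing factor $a_{-i_1}$ to the left (past $\Xi$ and then past $e^{H(x)}$) gives the recursion
\[
\bra{0}e^{H(x)}\Xi\,a_{-i_1}\cdots a_{-i_s}\ket{-r}
=p_{i_1}(x)\,\bra{0}e^{H(x)}\Xi\,a_{-i_2}\cdots a_{-i_s}\ket{-r}
-\sum_p\bra{0}e^{H(x)}\Xi^{(p)}\,a_{-i_2}\cdots a_{-i_s}\ket{-r},
\]
where $\Xi^{(p)}$ is $\Xi$ with $\psi_{m_p-p}$ replaced by $\psi_{m_p-p+i_1}$. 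Because $I_r$ is the kernel of the ring homomorphism $\pi_r$ (see~\eqref{eq:M_n(1)}), hence an ideal, multiplication by $p_{i_1}(x)$ preserves $I_r$; thus the inductive hypothesis (for $s-1\ge 1$), together with the base case $s=1$ given by Lemma~\ref{lemma:evaluate1}\eqref{item:evaluate1_3}, yields membership in $I_r$. The delicate point is that the indices $m_1-1,\dots,m_r-r$ occurring here need not be decreasing, so before invoking the base case one must normalize each relevant $\ket{m}^G$ to partition shape through the fermionic relations of Lemma~\ref{lemma:basic_2} and Corollary~\ref{cor:X(n)}; this bookkeeping is where essentially all the care is needed.

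The second identity is cleaner, since no remainder ever arises. As $p_k(-\beta,\dots,-\beta,x)=r(-\beta)^k+p_k(x)$, one has $f^{(r)}(a_1,a_2,\dots)=f(\widehat{a}_1,\widehat{a}_2,\dots)$ for the commuting shifted operators $\widehat{a}_k:=a_k+r(-\beta)^k$, and~\eqref{eq:a_i_on_G} reads $\widehat{a}_k\ket{n}^G=\sum_j\ket{n-k\ee_j}^G$ with no correction term. Running the same induction on the number of elementary factors---applying at each step the exact base case $E^{(r)}_i(a_1,\dots)\ket{\nu}^G=\ket{e_i(\bm{V}_r)\cdot\nu}^G$ to the partitions $\nu$ appearing in $f'(\bm{V}_r)\cdot\lambda$, and using the multiplicativity of $f\mapsto f(\bm{V}_r)$---gives $f^{(r)}(a_1,a_2,\dots)\ket{\lambda}^G=\ket{f(\bm{V}_r)\cdot\lambda}^G$ exactly.
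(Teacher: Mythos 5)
Your proof is correct, and its skeleton is the same as the paper's: establish the $f=e_i$ case (the two displays preceding the statement), pass to general $f$ by the homomorphism property of $f\mapsto f(a_{-1},a_{-2},\dots)$, $f\mapsto f^{(r)}(a_1,a_2,\dots)$ and of $f\mapsto f(\bm{u}_r)$, $f\mapsto f(\bm{V}_r)$ (Theorem \ref{prop:noncommutativeSchur} and its reverse-Knuth analogue), and reduce everything to the estimate $\bra{0}e^{H(x)}\ket{\Delta}\in I_r$ for remainders of the form \eqref{eq:remainder_term}. Where you genuinely differ is in that estimate. The paper settles it by citing Lemma \ref{lemma:evaluate1} \eqref{item:evaluate1_3}, which as stated covers a single trailing $a_{-i}$ and weakly decreasing positive indices, while the vectors \eqref{eq:remainder_term} can carry a product $a_{-i_1}\cdots a_{-i_s}$ and unsorted $m_p$. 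Your recursion---pulling $a_{-i_1}$ leftward through $e^{H(x)}$ via $[H(x),a_{-n}]=p_n(x)$ and $\bra{0}a_{-n}=0$, and absorbing the factor $p_{i_1}(x)$ using that $I_r=\ker\pi_r$ is an ideal---is a correct and clean way to reduce $s$ to $s-1$, and it makes explicit a strengthening that the paper leaves to the reader; the paper's own mechanism for the lemma is different (it pushes $a_{-i}$ rightward into $\ket{-r}$ via \eqref{eq:a_-ir}).

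One repair is needed in your base case $s=1$. The sequences $m$ that actually occur (for instance $m=\lambda+i\ee_p$) can have ascents larger than $1$, and then Lemma \ref{lemma:basic_2} and Corollary \ref{cor:X(n)} do not apply, since they require $m_j-m_{j+1}\geq -1$; so ``normalizing $\ket{m}^G$ to partition shape'' is not available. The normalization that does work is the one in the paper's proof of Lemma \ref{lemma:evaluate1} \eqref{item:evaluate1_3}: push every $e^{\Theta}$ to the right by Lemma \ref{lemma:basic_1} \eqref{item:basic_1_1}, expand $a_{-i}\ket{-r}$ by \eqref{eq:a_-ir}, and reorder the resulting $\psi$-words by anticommutation. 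Each such word has $r+p$ factors ($p\geq 1$) whose distinct indices are bounded below by $-(r+p)+1$, so after sorting it satisfies the hypothesis of Lemma \ref{lemma:evaluate1} \eqref{item:evaluate1_2}, and the corresponding term lies in $I_{r+p-1}\subset I_r$. This is exactly the bookkeeping you flagged, and with it your argument closes; the rest of your proposal, including the remainder-free treatment of the second identity via the shifted operators $a_k+r(-\beta)^k$, coincides with the paper.
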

\begin{proof}
It suffices to prove $\bra{0}e^{H(x)}\ket{\Delta}\in I_r$.
Recall the fact that $\ket{\Delta}$ is a sum of vectors of the form \eqref{eq:remainder_term}.
Since they are contained in $I_r$ (Lemma \ref{lemma:evaluate1} \eqref{item:evaluate1_3}), we conclude $\bra{0}e^{H(x)}\ket{\Delta}\in I_r$.
\end{proof}

\subsection{$s_{\lambda/\mu}(\bm{v})$ and $s_{\lambda/\mu}(\bm{U})$}

Define the linear operators $v_i:\mathfrak{X}\to \mathfrak{X}$ and $U_i:\mathfrak{X}\to \mathfrak{X}$ that act on a basis $\lambda$ as
\begin{align*}
&v_i\cdot \lambda=
\begin{cases}
\lambda\cup \{\mbox{a box in $i$-th row} \}, & \mbox{if possible},\\
-\beta\cdot \lambda, & \mbox{otherwise}.
\end{cases}\\
&U_i\cdot \mu=
\begin{cases}
(-\beta)^{\zet{\mu-\ee_i}-\zet{\underline{\mu-\ee_{i}}}}\cdot \underline{\mu-\ee_i}, & \mu_i>0,\\
0, & \mu_i=0.
\end{cases}
\end{align*}
By seeing the action on the basis, we find that the operators $v_i$ satisfy the Knuth relation:
\begin{align*}
&v_iv_kv_j=v_kv_iv_j,\quad i\leq j<k,\\
&v_jv_iv_k=v_jv_kv_i,\quad i<j\leq k,
\end{align*}
and the $U_i$ satisfy the ``reverse'' Knuth relation:
\begin{align*}
&U_iU_kU_j=U_kU_iU_j,\quad i\geq j>k,\\
&U_jU_iU_k=U_jU_kU_i,\quad i>j\geq k.
\end{align*}
We can define the non-symmetric skew Schur polynomials $s_{\lambda/\mu}(\bm{v}_r)$ and $s_{\lambda/\mu}(\bm{U}_r)$ in the same way as Definition \ref{defi:noncommutativeSchur}.

\subsection{Actions on $\ket{n}^g$}

Let $n=(n_1,\dots,n_r)\in \ZZ^r$ and $i>0$.
By using \eqref{eq:relation_added} and the commutative relations $[a_{-i},e^{-\theta} ]=-(-\beta)^ie^{-\theta}$, $[a_{i},e^{-\theta}]=0$, we obtain
\begin{align}
&\{ a_{-i}+(r-1)(-\beta)^i\}\ket{n}^g=\sum_{j=1}^r\ket{n+i\ee_j }^g+\ket{\delta}\label{eq:a_-i_on_g},\\
&\hspace{4em} \mbox{where}\quad \ket{\delta}=\psi_{n_1-1}e^{-\theta}\dots  e^{-\theta}\psi_{n_r-r} a_{-i}\ket{-r},\nonumber\\
&a_{i}\ket{n}^g=\sum_{j=1}^r\ket{n-i\ee_j }^g\label{eq:a_i_on_g}.
\end{align}
\begin{rem}
Note that $\ket{\delta}$ is \textit{not} equal to 
\[
\psi_{n_1-1}e^{-\theta}\dots  e^{-\theta}\psi_{n_r-r}
\mbox{\fbox{$e^{-\theta}$}} a_{-i}\ket{-r}.
\]
See Remarks \ref{rem:n^g_tips} and \ref{rem:note}.
This is why the coefficient of $(-\beta)^i$ in \eqref{eq:a_-i_on_g} is not $r$ but $r-1$.
\end{rem}

From (\ref{eq:a_-i_on_g}--\ref{eq:a_i_on_g}), we have
\begin{align*}
&E_i^{(r-1)}(a_{-1},a_{-2},\dots)\ket{n}^g=\sum_{1\leq j_1< \dots< j_i\leq r}\ket{n+\ee_{j_1}+\dots+\ee_{j_i} }^g+\ket{\Delta_i},\\
&E_i(a_{1},a_{2},\dots)\ket{n}^g=\sum_{1\leq j_1< \dots< j_i\leq r}\ket{n-\ee_{j_1}-\dots-\ee_{j_i} }^g,
\end{align*}
where $\ket{\Delta_i}$ is a sum of the vectors
\begin{equation}\label{eq:remainder_term2}
a_{-j_1}\dots a_{-j_k}\cdot \psi_{n_1-1}e^{-\theta}\dots e^{-\theta}\psi_{n_r-r} a_{-s}\ket{-r}
\end{equation}
with $0\leq s<i$ and $j_1,j_2,\dots,j_k>0$ ($k\geq 0$).

If $n=\lambda$ is a partition, the $\ket{\lambda+\ee_{j_1}+\dots+\ee_{j_i} }^g$ and $\ket{\lambda-\ee_{j_1}-\dots-\ee_{j_i} }^g$ satisfy the assumption of Corollary \ref{cor:X(n)}.
Therefore, they can be written as 
\begin{align*}
&\ket{\lambda+\ee_{j_1}+\dots+\ee_{j_i} }^g
=(-\beta)^{\zet{\lambda+\ee_{j_1}+\dots+\ee_{j_i}}-\zet{\underline{\lambda+\ee_{j_1}+\dots+\ee_{j_i}}}}\ket{\underline{\lambda+\ee_{j_1}+\dots+\ee_{j_i}} }^g,\\
&\ket{\lambda-\ee_{j_1}-\dots-\ee_{j_i} }^g
=(-\beta)^{\zet{\lambda-\ee_{j_1}-\dots-\ee_{j_i}}-\zet{\underline{\lambda-\ee_{j_1}-\dots-\ee_{j_i}}}}\ket{\underline{\lambda-\ee_{j_1}-\dots-\ee_{j_i}} }^g.
\end{align*}
Further, by seeing the actions of $v_i$ and $U_i$, these equations are rewritten as
\begin{align*}
&\ket{\lambda+\ee_{j_1}+\dots+\ee_{j_i} }^g=\ket{v_{j_i}\dots v_{j_2}v_{j_1}\cdot \lambda}^g,\\
&\ket{\lambda-\ee_{j_1}-\dots-\ee_{j_i} }^g=\ket{U_{j_1}U_{j_2}\dots U_{j_i}\cdot \lambda}^g.
\end{align*}
Then we conclude
\begin{align*}
&E_i^{(r-1)}(a_{-1},a_{-2},\dots)\ket{\lambda}^g=\ket{e_i(\bm{v}_r)\cdot \lambda }^g+\ket{\Delta_i},\\
&E_i(a_{1},a_{2},\dots)\ket{\lambda}^g=\ket{e_i(\bm{U}_r)\cdot \lambda}^g.
\end{align*}
\begin{prop}\label{prop:action_on_g}
Let $\lambda$ be a partition and $r\geq \ell(\lambda)$.
For a symmetric polynomial $f(x_1,\dots,x_r)$, let $f(P_1,P_2,\dots)$ and $f^{(r-1)}(P_1,P_2,\dots)$ be as in Proposition \ref{prop:action_on_G}.
Then we have
\begin{align*}
&f^{(r-1)}(a_{-1},a_{-2},\dots)\ket{\lambda}^g=\ket{f(\bm{v}_r)\cdot \lambda }^g+\ket{\Delta_f},\\
&f(a_{1},a_{2},\dots)\ket{\lambda}^g=\ket{f(\bm{U}_r)\cdot \lambda}^g,
\end{align*}
where $\ket{\Delta_f}$ is a ``remainder term," which satisfies $\ket{\Delta_{s_\nu}}=0$ for $\ell(\lambda)+\ell(\nu)\leq r$.
\end{prop}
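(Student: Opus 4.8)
The plan is to deduce both identities from the elementary cases established just before the statement and then to analyse the remainder, the exact vanishing of which for Schur functions is the real content.

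First I would extend the two displayed formulas from $f=e_i$ to arbitrary $f$ by multiplicativity. Since $[a_{-i},a_{-j}]=0$ and $[a_i,a_j]=0$ for $i,j>0$ by \eqref{eq:relation_added}, the operators $f^{(r-1)}(a_{-1},a_{-2},\dots)$ and $f(a_1,a_2,\dots)$ are genuine (commuting) polynomials in the $E^{(r-1)}_i(a_{-1},a_{-2},\dots)$, respectively the $E_i(a_1,a_2,\dots)$, given by the very polynomial that expresses $f$ in the $e_i$. On the combinatorial side, the $v_i$ satisfy the Knuth relation and the $U_i$ the reverse Knuth relation, so Theorem \ref{prop:noncommutativeSchur} (and its order-reversed version for the $U_i$) makes $f\mapsto f(\bm v_r)$ and $f\mapsto f(\bm U_r)$ algebra homomorphisms; hence $f(\bm v_r)$ and $f(\bm U_r)$ are given by the same polynomial in the commuting operators $e_i(\bm v_r)$, $e_i(\bm U_r)$. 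Iterating the elementary cases—which hold on every $\ket{\mu}^g$ with $\ell(\mu)\le r$ and are preserved under the length bound because $v_i,U_i$ keep $\ell\le r$—yields both formulas; note the second is already remainder-free at the elementary level, so no error accumulates there. This defines $\ket{\Delta_f}$ as the accumulated discrepancy in the first formula.

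Next I would record the shape of $\ket{\Delta_f}$: collecting the base remainders $\ket{\Delta_i}$, it is a finite sum of vectors of the form \eqref{eq:remainder_term2}, namely $a_{-j_1}\cdots a_{-j_k}\,\psi_{n_1-1}e^{-\theta}\cdots e^{-\theta}\psi_{n_r-r}\,a_{-s}\ket{-r}$ with $n\supseteq\lambda$ and $s\ge 1$. The essential structural point—cf.\ Remarks \ref{rem:n^g_tips} and \ref{rem:note}, and the reason the shift is $r-1$ rather than $r$—is that $a_{-s}$ sits directly against $\psi_{n_r-r}$, with no buffering $e^{-\theta}$.

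The main obstacle is the exact vanishing $\ket{\Delta_{s_\nu}}=0$ when $\ell(\lambda)+\ell(\nu)\le r$, and I would prove it termwise. Put $t=\ell(n)$; the empty rows of $n$ make each remainder vector factor through $\psi_{-t-1}e^{-\theta}\cdots e^{-\theta}\psi_{-r}\,a_{-s}\ket{-r}$, which is precisely Lemma \ref{lemma:evaluate2}\eqref{item:evaluate2_2} with its $r,s,i$ specialized to $t,\,r-t,\,s$, and hence vanishes as soon as $r-t\ge s$. Everything therefore reduces to the combinatorial inequality $\ell(n)+s\le \ell(\lambda)+\ell(\nu)$ for every remainder term, which combined with the hypothesis gives $r-\ell(n)\ge s$. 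This inequality is the delicate part: the naive degree count only yields $(\zet{n}-\zet{\lambda})+s\le \zet{\nu}$, and I would have to upgrade it from a count of boxes to a count of rows by tracking, through the Knuth-compatible expansion of $s_\nu$ into the $e_i$ (and the matching $E^{(r-1)}_i(a_{-1},a_{-2},\dots)$), that the skew shape $n/\lambda$ has all column heights at most $\ell(\nu)$ and that the $s$ units diverted into $a_{-s}$ would prolong one such column—so that $\ell(n)-\ell(\lambda)+s$ is capped by the $\ell(\nu)$ rows available for a shape-$\nu$ addition; here Corollary \ref{cor:X(n)} governs how the non-partition intermediate shapes reduce. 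Granting this inequality, every term of $\ket{\Delta_{s_\nu}}$ is killed by Lemma \ref{lemma:evaluate2}\eqref{item:evaluate2_2}, so $\ket{\Delta_{s_\nu}}=0$, and all remaining steps are formal.
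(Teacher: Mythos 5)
Your skeleton (multiplicativity via Theorem \ref{prop:noncommutativeSchur}, exactness of the $\bm{U}$-identity, and the description of $\ket{\Delta_f}$ as a sum of vectors \eqref{eq:remainder_term2} with $n\supseteq\lambda$, $s\ge 1$) coincides with the paper's derivation, and your reduction of the vanishing to a per-term application of Lemma \ref{lemma:evaluate2} \eqref{item:evaluate2_2} is also exactly the shape of the paper's (very terse) proof. The problem is the step you flag and then assume: the inequality $\ell(n)+s\le\ell(\lambda)+\ell(\nu)$ is not merely unproven, it is false, so no tracking of column heights can rescue a termwise argument. Take $r=1$, $\lambda=\emptyset$, $\nu=(2)$, so that $\ell(\lambda)+\ell(\nu)=1\le r$, and $S_{(2)}(a_{-1},a_{-2},\dots)=\tfrac12(a_{-1}^2+a_{-2})$ acts on $\ket{\emptyset}^g=\ket{0}$. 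Running the iteration, the inner $a_{-1}$ leaves no remainder ($\psi_{-1}a_{-1}\ket{-1}=\psi_{-1}\psi_{-1}\ket{-2}=0$), but the outer $a_{-1}$ slips past the main term $\ket{(1)}^g$ and leaves $\tfrac12\,\psi_{0}a_{-1}\ket{-1}=\tfrac12\,\psi_0\psi_{-1}\ket{-2}=\tfrac12\ket{(1,1)}$, a vector of the form \eqref{eq:remainder_term2} with $n=(1)$, $s=1$; the $a_{-2}$ part leaves $\tfrac12\,\psi_{-1}a_{-2}\ket{-1}=-\tfrac12\ket{(1,1)}$, a vector with $n=\emptyset$, $s=2$. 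Both terms satisfy your naive box count, both violate your row-count inequality ($\ell(n)+s=2>1$), both are nonzero, and Lemma \ref{lemma:evaluate2} \eqref{item:evaluate2_2} kills neither (it needs at least $s$ trailing empty rows, and here there is at most one). The remainder $\ket{\Delta_{s_{(2)}}}$ vanishes only because these two terms cancel.

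So the real content of $\ket{\Delta_{s_\nu}}=0$ is a cancellation between remainder terms coming from \emph{different} monomials of the power-sum (equivalently elementary) expansion of $s_\nu$ --- here between the $a_{-1}^2$- and $a_{-2}$-contributions --- and a termwise appeal to Lemma \ref{lemma:evaluate2} \eqref{item:evaluate2_2} can never detect this. Your fallback invariant (columns of $n/\lambda$ of height at most $\ell(\nu)$, with the $s$ diverted units prolonging one column) breaks for the same reason: the $e$-expansion of $s_\nu$ contains factors $e_m$ with $m>\ell(\nu)$ (already $s_{(2)}=e_1^2-e_2$, or $s_{(2,1)}=e_2e_1-e_3$), and such factors legitimately create the offending terms. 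Note that the conclusion itself is correct in the example --- one gets $\tfrac12(a_{-1}^2+a_{-2})\ket{0}=\ket{(2)}=\ket{s_{(2)}(\bm{v}_1)\cdot\emptyset}^g$ on the nose --- and, to be fair, the paper's own proof consists of essentially the same termwise assertion, so it is incomplete at the same point; but your proposal, by making the needed per-term bound explicit, commits to a route that is provably unworkable, and the cancellation mechanism that actually makes the statement true is left entirely unaddressed.
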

\begin{proof}
If suffices to prove $\ket{\Delta_{s_\nu}}=0$ for $\ell(\lambda)+\ell(\nu)\leq r$.
Let $f=s_{\nu}$.
Since $\ket{\Delta_f}$ is a sum of vectors of the form \eqref{eq:remainder_term2}, we have $\ket{\Delta_f}=0$ if $\lambda_{r}=\lambda_{r-1}=\dots=\lambda_{r-\ell(\nu)}=0$ (Lemma \ref{lemma:evaluate2} \eqref{item:evaluate2_2}). 
This is equivalent to $\ell(\lambda)+\ell(\nu)\leq r$.
\end{proof}

\section{Expansions of $s_\nu G_{\lambda \slsl \mu}$ and $s_\nu G_{\lambda/\mu}$ in stable Grothendieck polynomials}\label{sec:G-expan}

\subsection{The module of skew shapes}\label{sec:u-action}

By definition, a skew shape 
is identified with a pair of partitions $(\lambda,\mu)$ satisfying $\lambda\supset \mu$.
We extend the action of the linear operators $u_i,v_i,U_i,V_i$ to the $\QQ[\beta]$-module $\bigoplus_{\lambda\supset \mu}\QQ[\beta]\cdot (\lambda,\mu)$ of skew shapes by letting $u_i$ and $v_i$ act on $\lambda$ and $U_i$ and $V_i$ act on $\mu$:
\[
\begin{aligned}
&u_i\cdot (\lambda,\mu):=(u_i\cdot \lambda,\mu),\\
&U_i\cdot (\lambda,\mu):=(\lambda,U_i\cdot \mu),
\end{aligned}\qquad
\begin{aligned}
&v_i\cdot (\lambda,\mu):=(v_i\cdot \lambda,\mu),\\
&V_i\cdot (\lambda,\mu):=(\lambda,V_i\cdot \mu).
\end{aligned}
\]

\begin{example}
\def\Ygtab#1{{\tiny {\ygtab{#1}}}}%
\begin{align*}
\begin{array}{lll}
u_1\cdot \Ygtab{\bl & \\ \bl & \\ \\ }=\Ygtab{\bl & & \\ \bl & \\ \\ },\quad
&u_2\cdot \Ygtab{\bl & \\ \bl & \\ \\ }=-\beta\cdot \Ygtab{\bl & & \\ \bl & & \\ \\ },\quad
&u_3\cdot \Ygtab{\bl & \\ \bl & \\ \\ }=\Ygtab{\bl & \\ \bl & \\ & \\ },\\[1em]
U_1\cdot \Ygtab{\bl & \\ \bl & \\ \\ }=-\beta\cdot\Ygtab{& \\ & \\ \\},\quad
&U_2\cdot \Ygtab{\bl & \\ \bl & \\ \\ }=\Ygtab{\bl & \\ & \\ \\},\quad
&U_3\cdot \Ygtab{\bl & \\ \bl & \\ \\ }=0.
\end{array}
\end{align*}
\end{example}

\begin{example}
\def\Ygtab#1{{\tiny {\ygtab{#1}}}}%
\begin{align*}
\begin{array}{lll}
v_1\cdot \Ygtab{\bl & \\ \bl & \\ \\ }=\Ygtab{\bl & & \\ \bl & \\ \\ },\quad
&v_2\cdot \Ygtab{\bl & \\ \bl & \\ \\ }=-\beta\cdot \Ygtab{\bl & \\ \bl  & \\ \\ },\quad
&v_3\cdot \Ygtab{\bl & \\ \bl & \\ \\ }=\Ygtab{\bl & \\ \bl & \\ & \\ },\\[1em]
V_1\cdot \Ygtab{\bl & \\ \bl & \\ \\ }=-\beta\cdot\Ygtab{\bl & \\ \bl& \\ \\},\quad
&V_2\cdot \Ygtab{\bl & \\ \bl & \\ \\ }=\Ygtab{\bl & \\ & \\ \\},\quad
&V_3\cdot \Ygtab{\bl & \\ \bl & \\ \\ }=-\beta \Ygtab{\bl & \\ \bl & \\ \\ }.
\end{array}
\end{align*}
\end{example}

Obviously, as linear operators on $\bigoplus_{\lambda\supset \mu}\QQ[\beta]\cdot (\lambda,\mu)$, the $u_i,v_i,U_j,V_j$ satisfy the commutative relation: 
\begin{equation}\label{eq:comm1}
U_iu_j=u_jU_i,\quad
V_iv_j=v_jV_i.
\end{equation}

\subsection{Supersymmetric Schur function}\label{sec:superSchur}

Let $x=(x_1,x_2,\dots)$ and $y=(y_1,y_2,\dots)$ be two sets of indeterminate.
The \textit{supersymmetric Schur function} $s_\lambda(x/y)$ \cite[I, \S 5, Example 23]{macdonald1998symmetric} (see also \cite[\S 6.1, Exesise 8]{fulton_1996}) is the rational function that associates with the \textit{elementary supersymmetric functions} $e_0(x/y),e_1(x/y),\dots$ defined by
\[
\sum_{n=0}^{\infty}e_n(x/y)t^n:=\prod_{i}(1+x_it)\prod_{j}(1+y_jt)^{-1}.
\]
This is equivalent to saying
\begin{equation}\label{eq:super-elementary-function}
e_i(x/y)=h_0(y)e_i(x)-h_1(y)e_{i-1}(x)+\dots+(-1)^ih_i(y)e_0(x).
\end{equation}
In \cite[I, \S 5, Excesice 23, (1)]{macdonald1998symmetric}, Macdonald shows the following useful formula:
\begin{equation}\label{eq:super-Schur-expansion}
s_\lambda(x/y)=\sum_{\mu\subset\lambda} (-1)^{\zet{\lambda/\mu}}s_{(\lambda/\mu)'}(y)s_\mu(x),
\end{equation}
where $(\lambda/\mu)'$ is the transpose of $\lambda/\mu$.

For any symmetric function $f(x)$, we define the supersymmetric function $f(x/y)$ by seeing the expansion of $f$ in Schur polynomials.
For example, the $i$-th supersymmetric power sum $p_i(x/y)$ is expressed as
\begin{equation}\label{eq:super-powersum}
p_i(x/y)=(x_1^i+x_2^i+\cdots)-(y_1^i+y_2^i+\cdots).
\end{equation}

\subsection{Non-commutative supersymmetric Schur function}

\begin{defi}\label{defi:noncommutative_super_Schur}
We define the non-commutative supersymmetric Schur functions
$s_\lambda(\bm{u}_m/\bm{U}_n)$,
$s_\lambda(\bm{v}_m/\bm{V}_n)$
by
\begin{gather*}
s_\lambda(\bm{u}_m/\bm{U}_n):=\sum_{
\mu\subset \lambda
}
(-1)^{\zet{\lambda/\mu}}
s_{(\lambda/\mu)'}(\bm{U}_n)s_\mu(\bm{u}_m),\\
s_\lambda(\bm{v}_m/\bm{V}_n):=\sum_{
\mu\subset \lambda
}
(-1)^{\zet{\lambda/\mu}}
s_{(\lambda/\mu)'}(\bm{V}_n)s_\mu(\bm{v}_m).
\end{gather*}
\end{defi}

\subsection{$G_{\lambda\slsl\mu}$-expansion of $s_\nu G_{\lambda\slsl \mu}$}

Let $G_{\slsl}:\bigoplus_{\lambda\supset \mu}\QQ[\beta]\cdot (\lambda,\mu)\to \widehat{\Lambda}$ be the $\QQ[\beta]$-linear map that sends a skew diagram $(\lambda,\mu)$ to $G_{\lambda\slsl \mu}(x)$.
The following proposition provides an algorithm to express the product $s_\nu(x) G_{\lambda\slsl \mu}(x)$ as a linear combination of skew stable Grothendieck polynomials.
\begin{prop}\label{prop:product_sG//}
For $r\geq \ell(\lambda)$ and $s\geq \ell(\mu)$, the equation
\begin{gather*}
s_\nu(x)G_{\lambda\slsl \mu}(x)\equiv 
G_{\slsl}(s_\nu(\bm{u}_r/\bm{U}_s)\cdot (\lambda,\mu))
\mod{I_{r-s}}
\end{gather*}
holds.
In other words, we have
\[
s_\nu(x_1,\dots,x_{n})G_{\lambda\slsl \mu}(x_1,\dots,x_{n})
=
G_{\slsl}(s_\nu(\bm{u}_r/\bm{U}_s)\cdot (\lambda,\mu))\vert_{x_{n+1}=x_{n+2}=\cdots=0}
\]
for $n\leq r-s$.
\end{prop}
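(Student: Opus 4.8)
The plan is to reduce the statement to the case of a single power sum and then bootstrap to $s_\nu$ through the (commutative) algebra structure of the operators. Writing $G_{\lambda\slsl\mu}(x)={}^g\bra{\mu}e^{H(x)}\ket{\lambda}^G$ (Theorem \ref{thm:G//}), the starting point is that $[H(x),a_{-n}]=p_n(x)$ for $n>0$, whence $[e^{H(x)},a_{-n}]=p_n(x)e^{H(x)}$ and therefore
\[
p_n(x)\,{}^g\bra{\mu}e^{H(x)}\ket{\lambda}^G
={}^g\bra{\mu}e^{H(x)}a_{-n}\ket{\lambda}^G-{}^g\bra{\mu}a_{-n}e^{H(x)}\ket{\lambda}^G.
\]
First I would evaluate the two terms. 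On the ket side, Proposition \ref{prop:action_on_G} (with $f=p_n$, via \eqref{eq:a_-i_on_G}) gives $a_{-n}\ket{\lambda}^G=\ket{p_n(\bm{u}_r)\cdot\lambda}^G+\ket{\delta}$ with $\bra{0}e^{H(x)}\ket{\delta}\in I_r$. On the bra side, the definition of $\omega$ together with $a_{-n}^\ast=a_n$ yields ${}^g\bra{\mu}a_{-n}=\omega(a_n\ket{\mu}^g)$, and Proposition \ref{prop:action_on_g} (second identity, $f=p_n$, via \eqref{eq:a_i_on_g}) gives $a_n\ket{\mu}^g=\ket{p_n(\bm{U}_s)\cdot\mu}^g$ with \emph{no} remainder. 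Hence
\[
p_n(x)G_{\lambda\slsl\mu}(x)\equiv G_{\slsl}\big((p_n(\bm{u}_r)-p_n(\bm{U}_s))\cdot(\lambda,\mu)\big)\pmod{I_{r-s}},
\]
which identifies multiplication by $p_n(x)$ with the non-commutative supersymmetric power sum $p_n(\bm{u}_r)-p_n(\bm{U}_s)$, in accordance with \eqref{eq:super-powersum}.

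Next I would bootstrap from power sums to $s_\nu$. By Theorem \ref{prop:noncommutativeSchur} the $s_\alpha(\bm{u}_r)$ commute, so the $p_n(\bm{u}_r)$ commute; likewise the $p_m(\bm{U}_s)$ commute, and $p_n(\bm{u}_r)$ commutes with $p_m(\bm{U}_s)$ by \eqref{eq:comm1}. Thus the operators $T_n:=p_n(\bm{u}_r)-p_n(\bm{U}_s)$ form a commuting family, and $p_n\mapsto T_n$ extends to an algebra homomorphism. Writing $s_\nu=S_\nu(p_1,p_2,\dots)$ and applying the power-sum case repeatedly (monomial by monomial, then by linearity) gives $s_\nu(x)G_{\lambda\slsl\mu}(x)\equiv G_{\slsl}(S_\nu(T_1,T_2,\dots)\cdot(\lambda,\mu))\pmod{I_{r-s}}$. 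Finally, regarding $p_n\mapsto p_n(\bm{u}_r)$ and $p_n\mapsto p_n(\bm{U}_s)$ as the two Fomin--Greene homomorphisms in the commuting variables $\bm{u}_r$ and $\bm{U}_s$, the map $p_n\mapsto T_n$ is the image of the supersymmetric substitution $p_n\mapsto p_n(x)-p_n(y)$, so $S_\nu(T_1,T_2,\dots)$ is the image of $s_\nu(x/y)$; by Macdonald's expansion \eqref{eq:super-Schur-expansion} this equals $\sum_{\sigma\subset\nu}(-1)^{\zet{\nu/\sigma}}s_{(\nu/\sigma)'}(\bm{U}_s)s_\sigma(\bm{u}_r)=s_\nu(\bm{u}_r/\bm{U}_s)$, the non-commutative supersymmetric Schur function of Definition \ref{defi:noncommutative_super_Schur}. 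This establishes the congruence modulo $I_{r-s}$.

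The hard part will be the control of the remainder terms through the induction. For a single $p_n$ applied to a skew shape $(\lambda',\mu')$ with $\ell(\lambda')\le r$ and $\ell(\mu')\le s$, the remainder is ${}^g\bra{\mu'}e^{H(x)}\ket{\delta'}$; by Theorem \ref{thm:adjoint_action_fermion} this equals $g_{\mu'}^\perp\big(\bra{0}e^{H(x)}\ket{\delta'}\big)$, and since $\bra{0}e^{H(x)}\ket{\delta'}\in I_r$ by Lemma \ref{lemma:evaluate1}~\eqref{item:evaluate1_3}, Corollary \ref{cor:g_adjoint_distance} places it in $I_{r-\ell(\mu')}\subset I_{r-s}$. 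Because each $I_n$ is an ideal of $\Lambda$ (by the Littlewood--Richardson rule, multiplying $s_\kappa$ with $\ell(\kappa)>n$ by any symmetric function only enlarges the diagram), multiplication by $p_m(x)$ preserves $I_{r-s}$, so the remainders accumulated along the induction all remain in $I_{r-s}$. One checks along the way that the $\bm{u}_r$- and $\bm{U}_s$-actions keep the row lengths bounded by $r$ and $s$ respectively, so the hypotheses of Propositions \ref{prop:action_on_G} and \ref{prop:action_on_g} stay valid at each step.

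It then remains to pass to finitely many variables. For $n\le r-s$ we have $I_{r-s}\subseteq I_n$, so substituting $x_{n+1}=x_{n+2}=\cdots=0$ annihilates the difference $s_\nu(x)G_{\lambda\slsl\mu}(x)-G_{\slsl}(s_\nu(\bm{u}_r/\bm{U}_s)\cdot(\lambda,\mu))$ by \eqref{eq:M_n(1)}. Since this substitution is a ring homomorphism, the left-hand side becomes $s_\nu(x_1,\dots,x_n)G_{\lambda\slsl\mu}(x_1,\dots,x_n)$, which yields the stated equality.
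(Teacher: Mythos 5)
Your proposal is correct and follows essentially the same route as the paper: reduce to the power-sum case via the commutator $p_n(x)e^{H(x)}=e^{H(x)}a_{-n}-a_{-n}e^{H(x)}$, apply Propositions \ref{prop:action_on_G} and \ref{prop:action_on_g} to the ket and bra sides, place the remainder ${}^g\bra{\mu}e^{H(x)}\ket{\Delta}^G$ in $I_{r-s}$ via the adjoint-distance lemma, and identify the result with $p_n(\bm{u}_r/\bm{U}_s)$ using \eqref{eq:super-powersum}. The only difference is one of detail: where the paper dispatches the passage from power sums to $s_\nu$ with a single appeal to Theorem \ref{prop:noncommutativeSchur}, you spell out the commuting-family homomorphism, the identification $S_\nu(T_1,T_2,\dots)=s_\nu(\bm{u}_r/\bm{U}_s)$ via Macdonald's expansion, and the fact that $I_{r-s}$ is an ideal so remainders persist through the induction — all of which is implicit in the paper's citation.
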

\begin{proof}
By Theorem \ref{prop:noncommutativeSchur}, we know that it suffices to prove
\[
p_i(x)G_{\lambda\slsl\mu}(x)\equiv 
G_{\slsl}(p_i(\bm{u}_r/\bm{U}_s)\cdot (\lambda,\mu) )
\mod{I_{r-s}}
\]
for $i=0,1,\dots,\ell(\nu)$.
Since
\[
p_i(x)e^{H(x)}=e^{H(x)}a_{-i}-a_{-i}e^{H(x)}, 
\]
we obtain
\begin{align*}
&p_i(x)G_{\lambda\slsl \mu}(x)\\
&=
{}^g\bra{\mu}e^{H(x)}a_{-i}\ket{\lambda}^G
-
{}^g\bra{\mu}a_{-i}e^{H(x)}\ket{\lambda}^G\\
&={}^g\bra{\mu}e^{H(x)}\ket{P_i(\bm{u}_r)\cdot \lambda}^G
+
{}^g\bra{-P_i(\bm{U}_s)\cdot \mu}e^{H(x)}\ket{\lambda}^G+{}^g\bra{\mu}e^{H(x)}\ket{\Delta}^G
\end{align*}
from Propositions  \ref{prop:action_on_G} and \ref{prop:action_on_g}.
The last term is a ``remainder term,'' which would not give a nice symmetric function.
However, from Lemma \ref{lemma:adjoint_distance} and Proposition \ref{prop:action_on_G}, we find
\[
{}^g\bra{\mu}e^{H(x)}\ket{\Delta}^G\in I_{r-s}.
\]
From \eqref{eq:super-powersum}, it follows that
\begin{align*}
p_i(x)G_{\lambda\slsl \mu}(x)
&
\equiv
{}^g\bra{\mu}e^{H(x)}\ket{P_i(\bm{u}_r)\cdot \lambda}^G
+
{}^g\bra{-P_i(\bm{U}_s)\cdot \mu}e^{H(x)}\ket{\lambda}^G\mod{I_{r-s}}
\\
&=
G_{\slsl}(p_i(\bm{u}_r/\bm{U}_s)\cdot (\lambda,\mu) ).
\end{align*}
\end{proof}

\subsection{$G_{\lambda/\mu}$-expansion of $s_\nu G_{\lambda/ \mu}$}

Let $G:\bigoplus_{\lambda\supset \mu}\QQ[\beta]\cdot (\lambda,\mu)\to \widehat{\Lambda}$ be the $\QQ[\beta]$-linear map defined by $(\lambda,\mu)\mapsto G_{\lambda/\mu}(x)$.

\begin{prop}\label{prop:product_sG/}
For $r\geq \ell(\lambda)$ and $s\geq \ell(\mu)$, the equation
\begin{gather*}
s_\nu(x)G_{\lambda/ \mu}(x)\equiv 
G(s_\nu(\bm{u}_r/\bm{U}_s)\cdot (\lambda,\mu))
\mod{I_{r-s}}
\end{gather*}
holds.
\end{prop}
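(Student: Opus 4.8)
The plan is to follow the proof of Proposition~\ref{prop:product_sG//} closely, making only those adjustments forced by the extra factor $e^{-\Theta}$ in the presentation $G_{\lambda/\mu}(x)={}^g\bra{\mu}e^{-\Theta}e^{H(x)}\ket{\lambda}^G$ of Theorem~\ref{thm:G/}. By Theorem~\ref{prop:noncommutativeSchur} it again suffices to treat the power sums, i.e.\ to establish
\[
p_i(x)G_{\lambda/\mu}(x)\equiv G(p_i(\bm{u}_r/\bm{U}_s)\cdot(\lambda,\mu))\mod I_{r-s},\qquad i=0,1,\dots,\ell(\nu).
\]
First I would insert the identity $p_i(x)e^{H(x)}=e^{H(x)}a_{-i}-a_{-i}e^{H(x)}$ and split $p_i(x)G_{\lambda/\mu}(x)$ into ${}^g\bra{\mu}e^{-\Theta}e^{H(x)}a_{-i}\ket{\lambda}^G$ and $-{}^g\bra{\mu}e^{-\Theta}a_{-i}e^{H(x)}\ket{\lambda}^G$.

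The bookkeeping adjustment is the following. Since $\Theta$ is a linear combination of the $a_{-k}$ with $k>0$, the vanishing $[a_{-i},a_{-k}]=0$ gives $[a_{-i},\Theta]=0$, so $a_{-i}$ commutes with $e^{-\Theta}$. In the second term I may therefore rewrite ${}^g\bra{\mu}e^{-\Theta}a_{-i}={}^g\bra{\mu}a_{-i}e^{-\Theta}$, and then ${}^g\bra{\mu}a_{-i}={}^g\bra{p_i(\bm{U}_s)\cdot\mu}$ exactly as in Proposition~\ref{prop:product_sG//} (this is Proposition~\ref{prop:action_on_g} applied to $a_i\ket{\mu}^g$, transported by $\omega$). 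For the first term, Proposition~\ref{prop:action_on_G} gives $a_{-i}\ket{\lambda}^G=\ket{p_i(\bm{u}_r)\cdot\lambda}^G+\ket{\Delta}$. Reassembling the two main pieces through Theorem~\ref{thm:G/} and invoking the super power-sum identity \eqref{eq:super-powersum}, namely $p_i(x/y)=p_i(x)-p_i(y)$, produces $G(p_i(\bm{u}_r/\bm{U}_s)\cdot(\lambda,\mu))$ and leaves the single remainder $R:={}^g\bra{\mu}e^{-\Theta}e^{H(x)}\ket{\Delta}$.

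The hard part is the remainder estimate, and here the genuine adjustment enters: unlike in Proposition~\ref{prop:product_sG//}, the term $R$ is not directly of the form ${}^g\bra{\mu}e^{H(x)}\ket{\Delta}$, so the earlier bound cannot be quoted verbatim. Instead I would commute $e^{-\Theta}$ across $e^{H(x)}$ via Lemma~\ref{lemma:basic_1}\eqref{item:basic_1_3}, which rearranges to $e^{-\Theta}e^{H(x)}=\prod_{l=1}^\infty(1+\beta x_l)\,e^{H(x)}e^{-\Theta}$, giving $R=\prod_{l=1}^\infty(1+\beta x_l)\cdot{}^g\bra{\mu}e^{H(x)}\big(e^{-\Theta}\ket{\Delta}\big)$. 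Applying Theorem~\ref{thm:adjoint_action_fermion} with $\ket{w}=\ket{\mu}^g$ converts the fermionic bracket into $g_\mu^\perp\big(\bra{0}e^{H(x)}e^{-\Theta}\ket{\Delta}\big)$; commuting $e^{-\Theta}$ back past $e^{H(x)}$ and using $\bra{0}e^{-\Theta}=\bra{0}$ (which holds because $\bra{0}a_{-k}=0$ for $k>0$) collapses its argument to $\prod_{l=1}^\infty(1+\beta x_l)^{-1}\cdot\bra{0}e^{H(x)}\ket{\Delta}$. By Proposition~\ref{prop:action_on_G} the factor $\bra{0}e^{H(x)}\ket{\Delta}$ lies in $I_r$; since multiplication by $\prod_{l=1}^\infty(1+\beta x_l)^{\pm1}$ preserves each $I_n$ (immediate from the substitution criterion \eqref{eq:M_n(1)}), and since $g_\mu^\perp$ carries $I_r$ into $I_{r-\ell(\mu)}\subseteq I_{r-s}$ by Corollary~\ref{cor:g_adjoint_distance} together with $\ell(\mu)\le s$, I conclude $R\in I_{r-s}$. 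This closes the reduction and hence proves the proposition.
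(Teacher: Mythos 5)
Your proposal is correct in its overall structure, and its main computation is exactly the adaptation the paper intends: the paper's own proof of Proposition \ref{prop:product_sG/} literally defers to Proposition \ref{prop:product_sG//}, and your reduction to power sums via Theorem \ref{prop:noncommutativeSchur}, the splitting $p_i(x)e^{H(x)}=e^{H(x)}a_{-i}-a_{-i}e^{H(x)}$, the observation $[a_{-i},\Theta]=0$ to move $a_{-i}$ past $e^{-\Theta}$ on the bra side, the use of Propositions \ref{prop:action_on_G} and \ref{prop:action_on_g}, and the reassembly through Theorem \ref{thm:G/} and \eqref{eq:super-powersum} reproduce that argument faithfully. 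You are also right that the one point where ``exactly the same'' is not literal is the remainder $R={}^g\bra{\mu}e^{-\Theta}e^{H(x)}\ket{\Delta}$.

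Your treatment of $R$, however, has a weak link. The series $\prod_{l=1}^\infty(1+\beta x_l)^{-1}=\sum_i(-\beta)^ih_i(x)$ is \emph{not} an element of $\widehat{\Lambda}$ (for the same reason that $h_0+h_1+h_2+\cdots\notin\widehat{\Lambda}$), so the assertion that multiplication by it ``preserves each $I_n$'' is not meaningful inside the paper's ring: for $f\in I_r$ the product $\prod_l(1+\beta x_l)^{-1}f$ generally leaves $\widehat{\Lambda}$ altogether (take $f=s_\lambda$ with $\ell(\lambda)=r+1$; the product is $\sum_{\mu}(-\beta)^{\zet{\mu/\lambda}}s_\mu$ over horizontal strips $\mu/\lambda$, which has infinitely many terms of length $r+1$), and then $g_\mu^\perp$ and Corollary \ref{cor:g_adjoint_distance}, which are defined and proved on $\widehat{\Lambda}$, cannot be quoted as they stand. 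The argument is repairable --- one can work in the space of arbitrary formal sums $\sum_\lambda c_\lambda s_\lambda$, check that the substitution criterion \eqref{eq:M_n(1)}, the operator $g_\mu^\perp$, and Lemma \ref{lemma:adjoint_distance} all extend, and note at the end that $R$ itself lies in $\widehat{\Lambda}$ as a difference of two elements of $\widehat{\Lambda}$ --- and this is arguably within the paper's own tolerance for formal manipulation, but as written the step is a genuine gap in rigor rather than an ``immediate'' one. There is a shorter route that stays entirely inside $\widehat{\Lambda}$ and lets you quote the earlier estimate verbatim: absorb $e^{-\Theta}$ into the bra instead of pushing it through $e^{H(x)}$. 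Lemma \ref{lemma:rook} gives ${}^g\bra{\sigma}e^{\Theta}=\sum_{\sigma/\tau\,\mathrm{rook\,strip}}\beta^{\zet{\sigma/\tau}}\,{}^g\bra{\tau}$, a unitriangular (and, below $\mu$, finite) system, whose inversion expresses ${}^g\bra{\mu}e^{-\Theta}$ as a finite $\QQ[\beta]$-combination of the ${}^g\bra{\sigma}$ with $\sigma\subseteq\mu$, hence $\ell(\sigma)\leq s$. Then $R$ is a finite combination of terms ${}^g\bra{\sigma}e^{H(x)}\ket{\Delta}$, each of which lies in $I_{r-\ell(\sigma)}\subseteq I_{r-s}$ by precisely the argument (Theorem \ref{thm:adjoint_action_fermion}, Proposition \ref{prop:action_on_G}, Corollary \ref{cor:g_adjoint_distance}) used in Proposition \ref{prop:product_sG//}.
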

\begin{proof}
The proof is exactly same as that of Proposition \ref{prop:product_sG//}.
\end{proof}

\begin{example}
Let $r=4$, $s=1$,
$\nu=\Ygtab{ \\}$, 
$\lambda=\Ygtab{ & & \\ \\ }$,
and $\mu=\Ygtab{ \\}$.
Since
\begin{align*}
s_{(1)}(\bm{u}_4/\bm{U}_1)
=h_0(\bm{U}_1)e_1(\bm{u}_4)-h_1(\bm{U}_1)e_0(\bm{u}_4)=u_1+u_2+u_3+u_4-U_1,
\end{align*}
we obtain $(s_\lambda=s_\lambda(x_1,x_2,x_3)$, $G_{\lambda\slsl\mu}=G_{\lambda\slsl\mu}(x_1,x_2,x_3))$
\begin{gather*}
s_{(1)}
G_{(3,1)\slsl(1)}
=
G_{(4,1)\slsl(1)}
+
G_{(3,2)\slsl(1)}
+
G_{(3,1,1)\slsl(1)}
-\beta
G_{(3,1,1,1)\slsl(1)}
-
G_{(3,1)},\\
s_{(1)}
G_{(3,1)/(1)}
=
G_{(4,1)/(1)}
+
G_{(3,2)/(1)}
+
G_{(3,1,1)/(1)}
-\beta
G_{(3,1,1,1)/(1)}
-
G_{(3,1)}.
\end{gather*}
\end{example}

\section{$g_{\lambda/\mu}$-expansion of $s_\nu g_{\lambda/\mu}$}\label{sec:g-expan}

Let $g:\bigoplus_{\lambda\supset \mu}\QQ[\beta]\cdot (\lambda,\mu)\to \Lambda$ be the $\QQ[\beta]$-linear map defined by $(\lambda,\mu)\mapsto g_{\lambda/\mu}(x)$.
\begin{prop}\label{prop:product_sg}
For $r\geq \ell(\lambda)+\ell(\nu)$, we have 
\[
s_\nu(x)g_{\lambda/\mu}(x)= g(s_\nu(\bm{v}_{r+1}/\bm{V}_r)\cdot (\lambda,\mu)).
\]
\end{prop}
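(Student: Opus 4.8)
The plan is to mirror exactly the proof of Proposition~\ref{prop:product_sG//}, but working in the dual setting where the roles of the ``creation-type'' and ``annihilation-type'' operators are interchanged, and where the remainder term vanishes outright rather than merely landing in some $I_N$. By Theorem~\ref{prop:noncommutativeSchur}, the non-commutative Schur functions obey the usual multiplicative structure, so it suffices to establish the claim when $s_\nu$ is replaced by a power sum $p_i(x)$ for $i=0,1,\dots,\ell(\nu)$; the general case then follows by expressing $s_\nu$ as a polynomial in the $p_i$ and invoking the homomorphism property. Thus I reduce to proving
\[
p_i(x)g_{\lambda/\mu}(x)=g\bigl(p_i(\bm{v}_{r+1}/\bm{V}_r)\cdot(\lambda,\mu)\bigr)
\]
for $r\geq \ell(\lambda)+\ell(\nu)$.

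First I would start from the free-fermionic presentation $g_{\lambda/\mu}(x)={}^G\bra{\mu}e^{H(x)}\ket{\lambda}^g$ (Theorem~\ref{thm:g/}) and use the commutator identity $p_i(x)e^{H(x)}=e^{H(x)}a_{-i}-a_{-i}e^{H(x)}$, which follows from Corollary~\ref{cor:adjoint_action} in the same way it was used in the proof of Proposition~\ref{prop:product_sG//}. This splits $p_i(x)g_{\lambda/\mu}(x)$ into a term where $a_{-i}$ acts on the right vector $\ket{\lambda}^g$ and a term where $a_{-i}$ acts (after transposing via $\omega$) on the left vector ${}^G\bra{\mu}$. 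For the right action I would apply Proposition~\ref{prop:action_on_g}: here the key point is that the relevant operator is $f^{(r-1)}(a_{-1},a_{-2},\dots)$ with the shift parameter $r\mapsto r+1$, producing $\ket{f(\bm{v}_{r+1})\cdot\lambda}^g$ plus a remainder $\ket{\Delta_f}$. For the left action, the ${}^G\bra{\mu}$ factor is governed by the $u$/$U$-type operators of Proposition~\ref{prop:action_on_G}; transposing appropriately, $a_{-i}$ acting on the bra yields the $U$-action, giving the $\bm{V}_r$ contribution on $\mu$ (with the sign coming from $p_i(x/y)=p_i(x)-p_i(y)$ via \eqref{eq:super-powersum}). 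Assembling the two pieces reproduces $g\bigl(p_i(\bm{v}_{r+1}/\bm{V}_r)\cdot(\lambda,\mu)\bigr)$.

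The main obstacle, and the reason the statement is cleaner than its $G$-analogue, is controlling the remainder term $\ket{\Delta_f}$. In Proposition~\ref{prop:product_sG//} the corresponding remainder only vanished modulo $I_{r-s}$, forcing a congruence. Here I expect the hypothesis $r\geq \ell(\lambda)+\ell(\nu)$ to make the remainder vanish \emph{identically}: by Proposition~\ref{prop:action_on_g}, one has $\ket{\Delta_{s_\nu}}=0$ precisely when $\ell(\lambda)+\ell(\nu)\leq r$, which is exactly our assumption. Concretely, the remainder vectors \eqref{eq:remainder_term2} contain a trailing $a_{-s}$ acting on $\ket{-r}$ sandwiched by enough $e^{-\theta}$'s, and Lemma~\ref{lemma:evaluate2}~\eqref{item:evaluate2_2} kills them once the last $\ell(\nu)$ parts of $\lambda$ are zero. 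Verifying that the shift to $\bm{v}_{r+1}$ (rather than $\bm{v}_r$) is the correct bookkeeping—traceable to the $(r-1)(-\beta)^i$ correction in \eqref{eq:a_-i_on_g} together with the $r\to r+1$ adjustment that accounts for the left factor—is the one place requiring genuine care; the rest is the routine transcription already carried out for the $G_{\lambda\slsl\mu}$ case.
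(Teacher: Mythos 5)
Your proposal follows the paper's proof essentially verbatim: reduce to power sums via Theorem \ref{prop:noncommutativeSchur}, split $p_i(x)g_{\lambda/\mu}(x)$ using $p_i(x)e^{H(x)}=e^{H(x)}a_{-i}-a_{-i}e^{H(x)}$ rewritten with the shifted operator $a_{-i}+r(-\beta)^i$ on both sides, then apply Propositions \ref{prop:action_on_G} and \ref{prop:action_on_g} to the ket and bra respectively, with the hypothesis $r\geq\ell(\lambda)+\ell(\nu)$ making the remainder $\ket{\Delta_{s_\nu}}$ vanish identically, exactly as you describe. Two cosmetic slips do not affect the argument: the commutator identity comes from the relations \eqref{eq:relation_added} rather than from Corollary \ref{cor:adjoint_action}, and the bra-side contribution on $\mu$ is the $V$-action of Proposition \ref{prop:action_on_G} (not the $U$-action), which is indeed what produces the $\bm{V}_r$ in $s_\nu(\bm{v}_{r+1}/\bm{V}_r)$.
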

\begin{proof}
The proof is similar to Proposition \ref{prop:product_sG//}.
It suffices to prove
\[
p_i(x)g_{\lambda/\mu}(x)=
g(p_i(\bm{v}_{r+1}/\bm{V}_r)\cdot (\lambda,\mu) )
\]
for $i=0,1,\dots,\ell(\nu)$.
Since 
\begin{align*}
p_i(x)e^{H(x)}
&=e^{H(x)}a_{-i}-a_{-i}e^{H(x)}\\
&=e^{H(x)}\{a_{-i}+r(-\beta)^i\}-\{a_{-i}+r(-\beta)^i\}e^{H(x)},
\end{align*}
we have
\begin{align*}
&p_i(x)g_{\lambda/\mu}(x)\\
&=
{}^G\bra{\mu}e^{H(x)}P_i^{(r)}(a_{-1},a_{-2},\dots)\ket{\lambda}^g
-
{}^G\bra{\mu}P_i^{(r)}(a_{-1},a_{-2},\dots)e^{H(x)}\ket{\lambda}^g\\
&=
{}^G\bra{\mu}e^{H(x)}\ket{P_i(\bm{v}_{r+1})\cdot \lambda}^g
+
{}^G\bra{-P_i(\bm{V}_r)\cdot \mu}e^{H(x)}\ket{\lambda}^g\\
&=g(p_i(\bm{v}_{r+1}/\bm{V}_r)\cdot (\lambda,\mu))
\end{align*}
from Propositions  \ref{prop:action_on_G} and \ref{prop:action_on_g}.
\end{proof}

\begin{example}
Let $r=1$,
$\nu=(n)$, and
$\lambda=\mu=\emptyset$.
Then we have $s_{(n)}(\bm{v}_1/\bm{V}_0)=h_n(v_1)=v_1^n$, which leads $s_{(n)}(x)\cdot g_\emptyset(x)=g(v_1^n\cdot \emptyset)=g_{(n)}(x).$
\end{example}

\begin{example}
Let $r=n$,
$\nu=(1^n)$, and
$\lambda=\mu=\emptyset$.
Then 
\[
s_{(1^n)}(\bm{v}_{n+1}/\bm{V}_n)
=
\sum_{j=0}^n(-1)^je_{n-j}(\bm{v}_{n+1})h_j(\bm{V}_n).
\]
Since
$
e_{i}(\bm{v}_{n+1})\cdot \emptyset=\sum_{k=0}^{i} (-\beta)^{i-k}
{n-k \choose i-k}
\cdot (1^k)$ and
$
h_j(\bm{V}_n)\cdot \emptyset=
\beta^j
{-n \choose j}\cdot \emptyset
$,
we have 
\begin{align*}
s_{(1^n)}(x)\cdot g_\emptyset(x)
&=g\left(\sum_{k=0}^n\sum_{j=0}^n(-\beta)^{n-k}{n-k \choose n-j-k}{-n \choose j}\cdot (1^k)\right)\\
&=g\left(\sum_{k=0}^n(-\beta)^{n-k}{-k\choose n-k}\cdot (1^k)\right)
=\sum_{k=0}^{n}\beta^{n-k}{n-1\choose n-k}g_{(1^k)}(x).
\end{align*}
%
\end{example}

\begin{example}
Let $r=3$,
$\nu=\Ygtab{ & \\}$, and
$\lambda/\mu=\Ygtab{\bl & \\ \\}$.
Since
\[
s_{\nu}(\bm{v}_{4}/\bm{V}_3)
=
e_0(\bm{V}_3)h_2(\bm{v}_{4})
-
e_1(\bm{V}_3)h_1(\bm{v}_{4})
+
e_2(\bm{V}_3)h_0(\bm{v}_{4}),
\]
we have
\bgroup
\def\Ygtab#1{{\tiny {\ygtab{#1}}}}%
\begin{align*}
&\begin{aligned}
h_2(\bm{v}_4)\cdot \Ygtab{\bl & \\ \\}
&=
\Ygtab{\bl & & & \\ \\}
+
\Ygtab{\bl & & \\ & \\}
+
\Ygtab{\bl & & \\ \\ \\}
-\beta\cdot
\Ygtab{\bl & & \\ \\}
-2\beta\cdot
\Ygtab{\bl & \\ &  \\}\\
&\hspace{3em}+
\Ygtab{\bl & \\ & \\ \\}
-2\beta\cdot
\Ygtab{\bl & \\ \\ \\ }
+\beta^2\cdot
\Ygtab{\bl & \\ \\}
\end{aligned}\\
&\begin{aligned}
e_1(\bm{V}_3)h_1(\bm{v}_{4})\cdot \Ygtab{\bl & \\ \\}
&=e_1(\bm{V}_3)\left(
\Ygtab{\bl & & \\ \\}
+
\Ygtab{\bl & \\ & \\}
+
\Ygtab{\bl & \\ \\ \\}
-\beta\cdot
\Ygtab{\bl & \\ \\}
\right)\\
&=
\Ygtab{ & & \\ \\}
+
\Ygtab{ & \\ & \\}
+
\Ygtab{ & \\ \\ \\}
-\beta\cdot
\Ygtab{ & \\ \\}\\
&\hspace{1em}
-2\beta\cdot
\Ygtab{\bl & & \\ \\}
-2\beta\cdot
\Ygtab{\bl & \\ & \\}
-2\beta\cdot
\Ygtab{\bl & \\ \\ \\}
+2\beta^2\cdot
\Ygtab{\bl & \\ \\}
\end{aligned}\\
&\begin{aligned}
e_2(\bm{V}_3)\cdot \Ygtab{\bl & \\ \\}
=
-2\beta\cdot
\Ygtab{ & \\ \\}
+\beta^2
\Ygtab{\bl & \\ \\}
\end{aligned}.
\end{align*}
\egroup
From them, we conclude
\begin{align*}
s_{\tiny \Ygtab{ & \\}}g_{\tiny \Ygtab{ \bl & \\ \\}}
&=
g_{\tiny \Ygtab{\bl & & & \\ \\}}
+
g_{\tiny \Ygtab{\bl & & \\ & \\}}
+
g_{\tiny \Ygtab{\bl & & \\ \\ \\}}
+
g_{\tiny \Ygtab{\bl & \\ & \\ \\}}\\
&\hspace{2em}
-
g_{\tiny \Ygtab{ & & \\ \\}}
-
g_{\tiny \Ygtab{ & \\ & \\}}
-
g_{\tiny \Ygtab{ & \\ \\ \\}}
+\beta
g_{\tiny \Ygtab{\bl & & \\ \\}}
-\beta
g_{\tiny \Ygtab{ & \\ \\}}.
\end{align*}
\end{example}

\begin{example}
It is interesting to consider the case $\lambda=\mu$.
In this case, it follows that $g_{\lambda/\mu}(x)=g_\emptyset(x)=1$.

For example, if $\lambda=\mu=(1)$, $\nu=(2)$, and $r=2$, we have
\begin{align*}
s_{(2)}(x)g_\emptyset(x)
&=
g\left(
\sum_{i=0}^{2}e_i(\bm{V}_2)h_{2-i}(\bm{v}_3)\cdot ((1),(1))
\right)\\
&=
g\big(
((3),(1))
+
((2,1),(1))
-
((2),\emptyset)
-
((1,1),\emptyset)
-\beta
((1,1),(1))
\big)\\
&
=g_{(2,1)/(1)}(x)-g_{(1,1)}(x)-\beta g_{(1)}(x).
\end{align*}
Note that $g_{(3)/(1)}(x)=g_{(2)}(x)$.

Replacing $\lambda$ with various partitions, we can obtain a number of nontrivial relations between dual stable Grothendieck polynomials.
For example:
\begin{align*}
s_{(2)}(x)
&=g_{(2)}(x) & (\lambda=\mu=\emptyset)\\
&=g_{(2,1)/(1)}(x)-g_{(1,1)}(x)-\beta g_{(1)}(x).& (\lambda=\mu=(1))
\end{align*}

\begin{align*}
&s_{(3)}(x)\\
&=g_{(3)}(x) & (\lambda=\mu=\emptyset)\\
&
=
g_{(3,1)/(1)}(x)
-g_{(2,1)}(x)
-\beta g_{(2,1)/(1)}(x)
+\beta g_{(1,1)}(x)
+\beta^2 g_{(1)}(x)
& (\lambda=\mu=(1))\\
&=
g_{(3)}(x)
+g_{(2,1)}(x)
+ g_{(1,1,1)}(x)
-g_{(2,1,1)/(1)}(x)
+\beta g_{(1,1)}(x)
& (\lambda=\mu=(1,1))\\
&=
g_{(3,2)/(2)}(x)
-
g_{(2,2)/(1)}(x)
-\beta
g_{(2)}(x).
& (\lambda=\mu=(2))
\end{align*}
\end{example}

\section{Expansion of $s_\nu^\perp G_{\lambda/\mu}$ and $s_\nu^\perp g_{\lambda/\mu}$}\label{sec:expansion_of_s^perp}

\subsection{Transposed action}
We now introduce a new ``transposed'' action of $u_i,v_i,U_i,V_i$ on $(\lambda,\mu)$ by letting $u_i$ and $v_i$ act on $\mu$ and $U_i$ and $V_i$ act on $\lambda$:
\[
\begin{aligned}
&u_i\ast (\lambda,\mu):=
\begin{cases}
(\lambda,u_i\cdot \mu), & \lambda\supset u_i\cdot \mu\\
0, & \mbox{otherwise}
\end{cases}\\
&U_i\ast (\lambda,\mu):=
\begin{cases}
(U_i\cdot \lambda,\mu), & U_i\cdot \lambda \supset \mu\\
0, & \mbox{otherwise}
\end{cases}
\end{aligned}\qquad
\begin{aligned}
&v_i\ast (\lambda,\mu):=
\begin{cases}
(\lambda,v_i\cdot \mu), &\lambda \supset v_i\cdot \mu\\
0,& \mbox{otherwise}
\end{cases}\\
&V_i\ast (\lambda,\mu):=
\begin{cases}
(v_i\cdot \lambda,\mu), &v_i\cdot \lambda \supset \mu\\
0, & \mbox{otherwise}.
\end{cases}
\end{aligned}
\]

\begin{prop}
For $r\geq \ell(\lambda)$,
we have
\begin{enumerate}
\item 
$
s_\nu^\perp G_{\lambda/ \mu}(x)=
G
\left(
s_\nu(\bm{V}_r/(-\beta)^r )\ast (\lambda,\mu)
\right)
$,
\item 
$
s_\nu^\perp g_{\lambda/ \mu}(x)=g
\left(
s_\nu(\bm{U}_r)\ast (\lambda,\mu)
\right)
$.
\end{enumerate}
\end{prop}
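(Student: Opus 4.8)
The plan is to prove both identities by the same three-move strategy: convert $s_\nu^\perp$ into the insertion of an operator built from the $a_i$ with $i>0$, evaluate that operator on $\ket{\lambda}^G$ (resp.\ $\ket{\lambda}^g$) by the positive-index actions of Propositions~\ref{prop:action_on_G} and~\ref{prop:action_on_g}, and finally recognize the outcome as the transposed action $\ast$ followed by $G$ (resp.\ $g$). Throughout, the hypothesis $r\geq\ell(\lambda)$ is used only to invoke those two propositions.

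First I would rewrite $s_\nu^\perp$ at the level of fermions. Because $f\mapsto f^\perp$ is an algebra homomorphism for the Hall pairing (from $\langle(fg)^\perp h,k\rangle=\langle h,fgk\rangle$ one gets $(fg)^\perp=f^\perp g^\perp$, and the $p_i^\perp$ commute), the identity $s_\nu=S_\nu(p_1,p_2,\dots)$ gives $s_\nu^\perp=S_\nu(p_1^\perp,p_2^\perp,\dots)$. Iterating Corollary~\ref{cor:adjoint_action} then yields $s_\nu^\perp f=\bra{w}e^{H(x)}S_\nu(a_1,a_2,\dots)\ket{v}$ whenever $f=\bra{w}e^{H(x)}\ket{v}$. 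Feeding in the presentations $g_{\lambda/\mu}(x)={}^G\bra{\mu}e^{H(x)}\ket{\lambda}^g$ (Theorem~\ref{thm:g/}) and $G_{\lambda/\mu}(x)={}^g\bra{\mu}e^{-\Theta}e^{H(x)}\ket{\lambda}^G$ (Theorem~\ref{thm:G/}; here ${}^g\bra{\mu}e^{-\Theta}=\omega(e^{-\theta}\ket{\mu}^g)$ is a genuine element of $\mathcal{F}^\ast$, so the Corollary applies) reduces both statements to evaluating $S_\nu(a_1,a_2,\dots)$ on $\ket{\lambda}^g$ and on $\ket{\lambda}^G$.

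For the $g$-case the evaluation is immediate and exact: by \eqref{eq:a_i_on_g} the positive action carries no $(-\beta)$-shift, and Proposition~\ref{prop:action_on_g} gives $S_\nu(a_1,a_2,\dots)\ket{\lambda}^g=\ket{s_\nu(\bm{U}_r)\cdot\lambda}^g$, with no remainder term since those arise only for the negative-index action. For the $G$-case the subtlety is precisely the factor $(-\beta)^r$: by \eqref{eq:a_i_on_G} it is the shifted polynomials $E_i^{(r)}(a_1,a_2,\dots)$, not $E_i(a_1,a_2,\dots)$, that correspond to $e_i(\bm{V}_r)$ (Proposition~\ref{prop:action_on_G} with $f=e_i$). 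I would pass to generating functions: from $\sum_i E_i^{(r)}(a_1,a_2,\dots)t^i=(1-\beta t)^r\sum_i E_i(a_1,a_2,\dots)t^i$ one obtains $E_i(a_1,a_2,\dots)\ket{\lambda}^G=\ket{e_i(\bm{V}_r/(-\beta)^r)\cdot\lambda}^G$, because $(1-\beta t)^{-r}\sum_i e_i(\bm{V}_r)t^i=\sum_i e_i(\bm{V}_r/(-\beta)^r)t^i$ is exactly the supersymmetric generating function of Section~\ref{sec:superSchur} with $y$-alphabet $(-\beta)^r$. Since the $V_i$ obey the reverse Knuth relations, the map $f\mapsto f(\bm{V}_r/(-\beta)^r)$ is a homomorphism (Theorem~\ref{prop:noncommutativeSchur}), so this upgrades from the $e_i$ to $s_\nu$, giving $S_\nu(a_1,a_2,\dots)\ket{\lambda}^G=\ket{s_\nu(\bm{V}_r/(-\beta)^r)\cdot\lambda}^G$. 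I expect this identification of the bare Schur operator with the supersymmetric specialization $\bm{V}_r/(-\beta)^r$ to be the principal point of the argument.

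It remains to pair the resulting vectors back against the bra and match with $\ast$. Writing $s_\nu(\bm{U}_r)\cdot\lambda=\sum_{\lambda'}c_{\lambda'}\lambda'$ (and similarly for $s_\nu(\bm{V}_r/(-\beta)^r)$), Theorems~\ref{thm:g/} and~\ref{thm:G/} turn each vector back into $g_{\lambda'/\mu}(x)$, resp.\ $G_{\lambda'/\mu}(x)$, so the answers are $\sum_{\lambda'}c_{\lambda'}g_{\lambda'/\mu}(x)$ and $\sum_{\lambda'}c_{\lambda'}G_{\lambda'/\mu}(x)$. Since $g_{\lambda'/\mu}$ and $G_{\lambda'/\mu}$ vanish whenever $\mu\not\subset\lambda'$ (their combinatorial definitions are sums over tableaux of shape $\lambda'/\mu$, which is empty for non-containment), only the containing terms contribute, which is exactly the bookkeeping encoded by the transposed action $\ast$. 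The one point to verify is that the \emph{sequential} containment tests in the definition of $\ast$ agree with testing only the final shape; this holds because $U_i$ and $V_i$ remove boxes (or fix the shape up to the scalar $-\beta$), so the intermediate shapes never grow, and a failure of containment at any step forces failure at the end. Hence $\sum_{\mu\subset\lambda'}c_{\lambda'}g_{\lambda'/\mu}(x)=g(s_\nu(\bm{U}_r)\ast(\lambda,\mu))$, and likewise $G(s_\nu(\bm{V}_r/(-\beta)^r)\ast(\lambda,\mu))$ for the first identity, completing both parts.
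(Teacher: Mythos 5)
Your strategy coincides with the paper's: both arguments rewrite $s_\nu^\perp$ as the insertion of $S_\nu(a_1,a_2,\dots)$ via Corollary \ref{cor:adjoint_action}, evaluate the positive modes on $\ket{\lambda}^G$ and $\ket{\lambda}^g$ by Propositions \ref{prop:action_on_G} and \ref{prop:action_on_g} (your generating-function handling of the $(-\beta)^r$-shift is just the elementary-symmetric version of the paper's power-sum computation $a_i\ket{\lambda}^G=\ket{P_i(\bm{V}_r/(-\beta)^r)\cdot\lambda}^G$), and then upgrade from power sums to $s_\nu$ through the Fomin--Greene homomorphism. Your part (ii) is correct: there the bracket produced is ${}^G\bra{\mu}e^{H(x)}\ket{\lambda'}^g$, a genuine Hall-pairing adjoint equal to $G_\mu^\perp g_{\lambda'}$, which does vanish whenever $\mu\not\subset\lambda'$, and your observation that the sequential containment tests in $\ast$ agree with the test on the final shape (shapes only shrink along a $U$-word) is sound.

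Part (i), however, has a genuine gap exactly at the step where you discard the non-contained terms. Because of the inserted $e^{-\Theta}$, the bracket ${}^g\bra{\mu}e^{-\Theta}e^{H(x)}\ket{\lambda'}^G$ is \emph{not} an adjoint pairing; Theorem \ref{thm:G/} identifies it with $G_{\lambda'/\mu}$ only when $\mu\subset\lambda'$, and when $\mu\not\subset\lambda'$ it need not vanish. Indeed, Lemma \ref{lemma:rook} gives ${}^g\bra{(1)}e^{-\Theta}={}^g\bra{(1)}-\beta\,{}^g\bra{\emptyset}$, hence by Theorem \ref{thm:G//}
\[
{}^g\bra{(1)}e^{-\Theta}e^{H(x)}\ket{\emptyset}^G
=G_{\emptyset\slsl(1)}-\beta\,G_{\emptyset\slsl\emptyset}
=0-\beta\cdot 1=-\beta\neq 0 .
\]
This term is not negligible: take $\lambda=\mu=(1)$, $\nu=(1)$, $r=1$. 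Then $a_1\ket{(1)}^G=\ket{\emptyset}^G+\beta\ket{(1)}^G$, and the correct evaluation is $p_1^\perp G_{(1)/(1)}=-\beta+\beta\cdot G_{(1)/(1)}=0$, consistent with $G_{(1)/(1)}=1$; your reasoning would drop the $\ket{\emptyset}^G$ term (since $\emptyset\not\supset(1)$) and return $\beta$ instead. So in the $G$-case the terms whose outer shape fails to contain $\mu$ contribute nonzero, $(-\beta)$-weighted quantities, and the actual content of the last step of (i) is to show that these contributions are precisely what the transposed action's bookkeeping encodes (for instance, that ${}^g\bra{\mu}e^{-\Theta}e^{H(x)}\ket{\lambda'-\ee_i}^G=-\beta\,G_{\lambda'/\mu}$ whenever $\lambda'\supset\mu$ and $\lambda'_i=\mu_i$). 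That requires a separate argument --- the paper's own proof asserts the identification ${}^g\bra{\mu}e^{-\Theta}e^{H(x)}\ket{P_i(\bm{V}_r/(-\beta)^r)\cdot\lambda}^G=G(P_i(\bm{V}_r/(-\beta)^r)\ast(\lambda,\mu))$ without spelling this out --- and it cannot be replaced by your vanishing claim, which is false for this bracket.
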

\begin{proof}
From Proposition \ref{prop:action_on_G}, it follows that
\[
\{a_i+r(-\beta)^i\}\ket{\lambda}^G=P^{(r)}_i(a_1,a_2,\dots)\ket{\lambda}^G
=\ket{P_i(\bm{V}_r)\cdot \lambda}^G,
\]
which is equivalent to 
\[
a_i\ket{\lambda}^G=\ket{P_i(\bm{V}_r)\cdot \lambda}^G-r(-\beta)^r\ket{\lambda}^G
=\ket{P_i(\bm{V}_r/(-\beta)^r)\cdot \lambda}^G.
\]
By using Corollary \ref{cor:adjoint_action}, we have
\begin{align*}
p_i^\perp G_{\lambda/ \mu}(x)
&=
{}^g\bra{\mu}e^{-\Theta}e^{H(x)}a_i\ket{\lambda}^G
=
{}^g\bra{\mu}e^{-\Theta}e^{H(x)}\ket{P_i(\bm{V}_r/(-\beta)^r)\cdot\lambda}^G\\
&=
G
\left(
P_i(\bm{V}_r/(-\beta)^r )\ast (\lambda,\mu)
\right),
\end{align*}
which implies (i).

(ii) is also proved by the same calculations as (i).
\end{proof}

\begin{example}
Let $\nu=
\ygtab{
 & \\
\\
}$ and $r=3$.
Young tableaux of shape $\nu$ with entries $1'>2'>3'$ are listed as follows:
\[
\Ygtab{
2' & 1'\\
1'\\
}\ 
\Ygtab{
2' & 2'\\
1'\\
}\ 
\Ygtab{
3' & 1'\\
1'\\
}\ 
\Ygtab{
3' & 2'\\
1'\\
}\ 
\Ygtab{
3' & 3'\\
1'\\
}\ 
\Ygtab{
3' & 1'\\
2'\\
}\ 
\Ygtab{
3' & 2'\\
2'\\
}\ 
\Ygtab{
3' & 3'\\
2'\\
}
\]
Therefore, we have
\begin{align*}
s_\nu(\bm{U}_3)
&=
U_1U_2U_1+U_1U_2U_2
+U_1U_3U_1+U_1U_3U_2\\
&\hspace{3em}+U_1U_3U_3+U_2U_3U_1
+U_2U_3U_2+U_2U_3U_3.
\end{align*}
For example, 
\begin{align*}
&
s^\perp_{\tiny
\Ygtab{
 & \\
\\
}
}
g_{\tiny
\Ygtab{
 & \\
\\
}
}
=1,\\
& 
s^\perp_{\tiny
\Ygtab{
 & \\
\\
}
}
g_{\tiny
\Ygtab{
 & \\
 & \\
}
}
=
g_{\tiny
\Ygtab{
\\
}
}
-\beta
,
\\
&
s^\perp_{\tiny
\Ygtab{
 & \\
\\
}
}
g_{\tiny
\Ygtab{ \bl & \bl & \\
& & \\
\\}
}
=
g_{\tiny
\Ygtab{
\bl\bullet \\
\\
\\
}
}
+
g_{\tiny
\Ygtab{
\bl\bullet & \bl\bullet\\
 & \\
}
}
-\beta
g_{\tiny
\Ygtab{
\bl\bullet\\
 \\
}
}
+
g_{\tiny
\Ygtab{
\bl\bullet& \bl\bullet & \\
\bl\bullet \\
\\
}
}
=
g_{\tiny
\Ygtab{
\\
\\
}
}
+
g_{\tiny
\Ygtab{
& \\
}
}
+
g_{\tiny
\Ygtab{
\bl & \\
\\
}
}
-\beta
g_{\tiny
\Ygtab{
\\
}
}.
\end{align*}
\end{example}

\section*{Acknowledgments}

This work is partially supported by JSPS Kakenhi Grant Number 19K03605.
The author is grateful to Professor Takeshi Ikeda for his comments on the manuscript and for his suggestions for future research.

\providecommand{\bysame}{\leavevmode\hbox to3em{\hrulefill}\thinspace}
\providecommand{\MR}{\relax\ifhmode\unskip\space\fi MR }
\providecommand{\MRhref}[2]{%
  \href{http://www.ams.org/mathscinet-getitem?mr=#1}{#2}
}
\providecommand{\href}[2]{#2}


\end{document}